\theoremstyle{plain}
\date{\today}
\title{How behave the typical $L^q$-dimensions of measures?}
\author{Fr\'ed\'eric Bayart}
\address{
Clermont Universit\'e, Universit\'e Blaise Pascal, Laboratoire de Math\'ematiques, BP 10448, F-63000 CLERMONT-FERRAND -
CNRS, UMR 6620, Laboratoire de Math\'ematiques, F-63177 AUBIERE
}
\email{Frederic.Bayart@math.univ-bpclermont.fr}
\subjclass{}
\keywords{}
\newcommand{\veps}{\varepsilon}
\def\ZZ{\mathbb Z}
\def\card{\textrm{card}}
\def\dsupmuq{\overline{D}_\mu(q)}
\def\dinfmuq{\underline{D}_\mu(q)}
\def\dinfmu{\underline{D}_\mu}
\def\dsupmu{\overline{D}_\mu}
\def\dsupnuq{\overline{D}_\nu(q)}
\def\dinfnuq{\underline{D}_\nu(q)}
\def\dinfnu{\underline{D}_\nu}
\def\dsupnu{\overline{D}_\nu}
\def\pk{\mathcal P(K)}
\def\fk{\mathcal F(K)}
\def\dboxinf{\underline{\dim}_B}
\def\dboxinfloc{\underline{\dim}_{B,{\rm loc}}}
\def\dboxsup{\overline{\dim}_B}
\def\dboxsuploc{\overline{\dim}_{B,{\rm loc}}}
\def\dboxsupconv{\overline{\dim}_{B,{\rm conv}}}
\def\dboxsupconvmax{\overline{\dim}_{B,{\rm conv,max}}}
\def\sconv{s_{\rm conv}}
\def\sconvmax{s_{{\rm conv}}^{\rm \max}}
\def\dboxsuplocunif{\overline{\dim}_{B,{\rm loc},{\rm unif}}}
\def\dimh{\dim_{\mathcal H}}
\def\dimp{\dim_{\mathcal P}}
\DeclareMathOperator{\supp}{supp}
\DeclareMathOperator{\bsi}{bsi}
\newtheorem{theorem}{Theorem}[section]
\newtheorem{lemma}[theorem]{Lemma}
\newtheorem{proposition}[theorem]{Proposition}
\newtheorem{corollary}[theorem]{Corollary}
\theoremstyle{definition}}
\theoremstyle{definition}}
\theoremstyle{definition}\newtheorem{example}[theorem]{Example}}
\theoremstyle{definition}\newtheorem{definition}[theorem]{Definition}}
\theoremstyle{definition}}
\theoremstyle{definition}\newtheorem{remark}[theorem]{Remark}}
\newtheorem{question}[theorem]{Question}
\newtheorem*{OLSEN}{Theorem A (Olsen)}
\begin{document}

\begin{abstract}
We compute, for a compact set $K\subset\mathbb R^d$, the value of the upper and of the lower $L^q$-dimension of a typical
probability measure with support contained in $K$, for any $q\in\mathbb R$. Different definitions of the ``dimension'' of $K$ are involved to
compute these values, following $q\in\mathbb R$.
\end{abstract}

\maketitle

\section{Introduction}
Let $K$ be a compact subset of $\mathbb R^d$, and let $\mathcal P(K)$ be the set of Borel probability measures on $K$;
we endow $\mathcal P(K)$ with the weak topology.
In this paper, we are interested in properties shared by typical measures of $\pk$. By a property true for a typical measure
of $\pk$, we mean a property which is satisfied by a dense $G_\delta$ set of elements of $\pk$.

Specifically we deal with the upper and lower $L^q$-dimensions of measures. Let $\mu\in\pk$, $r>0$ and $q\in\mathbb R\backslash\{1\}$. We write
$$I_\mu(r,q)=\int_K \mu\big( B(x,r)\big)^{q-1}d\mu(x).$$
The lower and upper $L^q$-dimensions are now defined, for $q\neq 1$, by
\begin{eqnarray*}
 \dinfmuq&=&\liminf_{r\to 0}\frac{1}{q-1}\times\frac{\log I_\mu(r,q)}{\log r}\\
\dsupmuq&=&\limsup_{r\to 0}\frac{1}{q-1}\times\frac{\log I_\mu(r,q)}{\log r}.
\end{eqnarray*}
When $q=1$, the definitions involve a logarithmic factor:
\begin{eqnarray*}
I_\mu(r,1)&=&\int_K \log \mu\big(B(x,r)\big)d\mu(x)\\
 \dinfmu(1)&=&\liminf_{r\to 0}\frac{\int_{K}\log\mu\big(B(x,r)\big)d\mu(x)}{\log r}\\
\dsupmu(1)&=&\limsup_{r\to 0}\frac{\int_{K}\log\mu\big(B(x,r)\big)d\mu(x)}{\log r}.
\end{eqnarray*}

These dimensions were introduced by Hentschel and Procaccia in \cite{HP83} in order to generalize the information dimension of a measure.
They are important for their relationship with the multifractal formalism. This formalism, which was conjectured
in the 1980s in the physic literature, asserts that for ``good'' measures, $\dinfmuq=\dsupmuq$ for any $q\in\mathbb R$ and that
the Hausdorff multifractal spectrum of $\mu$ coincides with the Legendre transform of $\tau_\mu:\mathbb R\to\mathbb R$ defined by 
$\tau_\mu(q)=(1-q)\dinfmuq=(1-q)\dsupmuq$. 
This multifractal formalism has been verified for various classes of measures on $\mathbb R^d$, see \cite{Fal97} and the references therein.

\medskip

In a series of papers (\cite{OL05}, \cite{Ol07} and \cite{OL08}), L. Olsen made a first approach to estimate the typical
value of $\dinfmuq$ and of $\dsupmuq$. To state his results, we need to introduce some classical terminology which can be found e.g. in \cite{Fal03}.
For a subset $E\subset\mathbb R^d$, we denote the lower box dimension of $E$ and the upper box dimension of $E$
by $\dboxinf(E)$ and $\dboxsup(E)$, respectively. The Hausdorff and the packing dimension of $E$ are denoted respectively by 
$\dimh(E)$ and $\dimp(E)$. Also, for a subset $K$ of $\mathbb R^d$ and $x\in K$, we
define the lower local box dimension of $K$ at $x$ and the upper local box dimension of $K$ at $x$ by
\begin{eqnarray*}
 \dboxinfloc(x,K)&=&\lim_{r\to 0}\dboxinf\big(K\cap B(x,r)\big)\\
\dboxsuploc(x,K)&=&\lim_{r\to 0}\dboxsup\big(K\cap B(x,r)\big).
\end{eqnarray*}
\begin{OLSEN}
 Let $K$ be a compact subset of $\mathbb R^d$. Write
\begin{eqnarray*}
 s_-&=&\inf_{x\in K}\dboxinfloc(x,K)=\inf_{x\in K}\inf_{r>0}\dboxinf\big(B(x,r)\big)\\
s^+&=&\inf_{x\in K}\dboxsuploc(x,K)=\inf_{x\in K}\inf_{r>0}\dboxsup\big(B(x,r)\big)\\
s&=&\dboxsup(K).
\end{eqnarray*}
Then the following holds:
$$\begin{array}{c|c|c|}
   &\textrm{All measures $\mu\in\pk$ satisfy}&\textrm{A typical measure $\mu\in\pk$ satisfies} \\  \hline
&&\\
q> 1& 0\leq \dinfmuq\leq\dsupmuq\leq s& \dinfmuq=0\\
&&s_-\leq \dsupmuq\leq s^+ \\
&&\\ \hline
&&\\
q\in [0,1]&0\leq \dinfmuq\leq\dsupmuq\leq s&s_-\leq\dsupmuq\leq s.\\ 
&&\\
  \end{array}$$
\end{OLSEN}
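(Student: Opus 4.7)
The proof separates into deterministic bounds, which hold for every $\mu\in\pk$, and typical bounds obtained by Baire category on the Polish space $\pk$. A general principle I would exploit is that finitely supported atomic measures on $K$ are dense in $\pk$, so perturbations of the form $(1-\delta)\mu+\delta\nu$ with $\nu$ a well-chosen atomic measure give access to density arguments. The universal bound $\dsupmuq\leq s$ comes from a standard covering argument: for any $\veps>0$, the definition of $s=\dboxsup(K)$ produces a partition of $K$ into at most $N=r^{-s-\veps}$ pieces $P_i$ of diameter at most $cr$, and convexity of $t\mapsto t^q$ for $q>1$ (respectively concavity for $q\in[0,1]$) yields, via Jensen, $I_\mu(cr,q)\geq\sum_i\mu(P_i)^q\geq r^{(s+\veps)(q-1)}$ (respectively the reverse inequality, which combined with the negative sign of $q-1$ gives the same bound on the dimension). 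Non-negativity of $\dinfmuq$ is immediate from $0\leq\mu(B(x,r))\leq 1$.

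For the typical lower bound $\dsupmuq\geq s_-$, valid in both regimes of $q$, I would use that for every $x\in K$ and $r>0$ the inequality $\dboxinf(K\cap B(x,r))\geq s_-$ yields, at every sufficiently small scale $\rho$, a $\rho$-separated subset of $K$ of cardinality at least $\rho^{-s_-+\veps}$. The uniform atomic measure $\nu$ on such a family satisfies $I_\nu(\rho,q)\leq\rho^{(s_--\veps)(q-1)}$, which translates into $\overline{D}_\nu(q)\geq s_--\veps$ at that scale. For fixed $\rho$ and $\veps$, the set $\{\mu:I_\mu(\rho,q)<\rho^{(q-1)(s_--\veps)}\}$ should be open thanks to the appropriate semicontinuity of $\mu\mapsto\mu(B(x,r))$, and convex combinations $(1-\delta)\mu+\delta\nu$ show it is dense. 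A countable intersection over $\rho\to 0$ and $\veps\to 0$ then realises $\{\mu:\dsupmuq\geq s_-\}$ as a dense $G_\delta$.

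For $q>1$ two further typical bounds are needed. The equality $\dinfmuq=0$ follows from the observation that a single atom of mass $a>0$ already forces $I_\mu(r,q)\geq a^q$ for all $r$, and hence $\dinfmuq\leq 0$; atomic measures are dense, and $\{\mu:\dinfmuq\leq 0\}$ can be written as the intersection over $k,N$ of the open sets $\bigcup_{m>N}\{\mu:I_\mu(1/m,q)>(1/m)^{(q-1)/k}\}$, which is a $G_\delta$. For $\dsupmuq\leq s^+$, I would pick by definition of $s^+$ some $x_0\in K$ and $r_0>0$ with $\dboxsup(K\cap B(x_0,r_0))\leq s^++\veps$. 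A measure $\nu$ supported in $K\cap B(x_0,r_0)$ satisfies, by the deterministic step applied to that smaller compactum, $I_\nu(r,q)\geq r^{(s^++2\veps)(q-1)}$ for small $r$. For a perturbation $\mu_\delta=(1-\delta)\mu+\delta\nu$ the inequality $I_{\mu_\delta}(r,q)\geq\delta^q I_\nu(r,q)$ propagates the bound up to a harmless multiplicative constant and gives $\overline{D}_{\mu_\delta}(q)\leq s^++2\veps$. Density plus Baire category then transfer this to a typical $\mu$.

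The main obstacle I anticipate is the one-sided weak semicontinuity of $\mu\mapsto\mu(B(x,r))$: since $I_\mu(r,q)$ involves two copies of $\mu$ and the integration over $x$ interacts with the boundary behaviour of the balls, openness of the superlevel or sublevel sets used in the Baire category arguments has to be checked carefully, systematically using open balls when a lower bound on $I_\mu$ is required and closed balls when an upper bound is required. Threading this choice consistently through all the steps above, while simultaneously controlling the discontinuities of $r\mapsto\mu(B(x,r))$ in $r$, is where I expect the genuine technical effort to lie.
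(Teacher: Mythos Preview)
Theorem~A is quoted from Olsen's work and not proved in this paper; the paper establishes the sharper Theorem~\ref{THMMAINLQSPECTRUM} instead. Still, the tools developed here are precisely what a proof of Theorem~A requires, and your sketch departs from them in two places.

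Your argument for the typical bound $\dsupmuq\le s^+$ when $q>1$ has a genuine gap. You show that every perturbation $\mu_\delta=(1-\delta)\mu+\delta\nu$ satisfies $\overline D_{\mu_\delta}(q)\le s^++2\veps$, so $\{\mu:\dsupmuq\le s^++2\veps\}$ is \emph{dense}; but Baire category needs a dense $G_\delta$, and you never establish the $G_\delta$ part. Written out, $\{\mu:\dsupmuq\le t\}$ is a $\forall\exists\forall$ condition on scales and is not obviously $G_\delta$. The route used in the paper for the analogous bound $\dsupmuq\le s_u$ (Section~4.2), and essentially by Olsen for $s^+$, avoids this entirely: the Jensen-type inequality of Lemma~\ref{LEMJENSENLQ}, namely $I_\mu(2r,q)\ge \mu(E)^q/\mathbf N_r(E)^{q-1}$ with $E=K\cap B(x_0,r_0)$, shows that the single \emph{open} dense condition $\mu\big(B(x_0,r_0)\big)>0$ (open by Lemma~\ref{LEMTOPOOPEN}, dense by Lemma~\ref{LEMDENS2}) already forces $\dsupmuq\le\dboxsup(E)\le s^++\veps$ for every $\mu$ in that open set. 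No countable intersection over scales is needed.

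Your semicontinuity concern is legitimate, but the proposed fix---switching between open and closed balls---does not resolve it. Since $I_\mu(r,q)=\int\mu\big(B(x,r)\big)^{q-1}d\mu(x)$ depends on $\mu$ both as integrator and inside the integrand, neither choice of ball makes $\mu\mapsto I_\mu(r,q)$ semicontinuous in the required direction (the case $q=2$, where $I_\mu$ is a product measure of an open set, is exceptional). The paper's substitute (Corollaries~\ref{CORTOPO4} and~\ref{CORTOPO3}) is to restrict first to finitely supported $\mu$, where $I_\mu$ is a finite sum, and then to use the Fortet--Mourier estimate of Lemma~\ref{LEMTOPO1} to produce an explicit $\delta>0$ such that every $\nu$ with $L(\mu,\nu)<\delta$ has $I_\nu(r,q)$ comparable to $I_\mu(2r,q)$. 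The dense $G_\delta$ is then assembled as $\bigcap_m\bigcup_n B_L(\mu_n,\delta_n)$ with these explicit radii, rather than by writing superlevel or sublevel sets of $I_\mu(\cdot,q)$ as open sets.
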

\begin{remark} In this paper, we adopt the normalization of \cite{Ol07} instead of that of \cite{OL05}, because
 it avoids unpleasant factors $(1-q)$ all over the proofs, and also because it seems clearer if a dimension is always nonnegative.
We nevertheless mention that this inverts the definition of $\dinfmuq$ and $\dsupmuq$ of \cite{OL05} when $q>1$.
\end{remark}

\begin{remark} In \cite{OL08}, the inequality $s_-\leq\dsupmuq\leq s^+$ is announced for a typical measure when $q\in[0,1)$.
However, there is a mistake in the proof, precisely in Lemma 2.3.1. A correct statement of this lemma should be:
\begin{quote}
 Let $E\subset K$ and let $\mu\in\mathcal P(E)$. Then 
$$\dsupmuq\leq \dboxsup(E)$$
for all $q\in [0,1)$.
\end{quote}
This correct statement forces to replace $s^+$ by $s$ in Theorem A, which is less good. See also
the forthcoming Theorem \ref{THMMAINLQSPECTRUM}, which shows that typically $\dsupmuq\geq s^+$ for $q\in[0,1)$.
\end{remark}
\begin{remark}
 It should be observed that the precise statement of Theorem A is:
 \begin{quote} a typical measure satisfies for every $q> 1\dots$
 \end{quote} 
 That is formally
stronger than:
\begin{quote}
 for every $q> 1$, a typical measure satisfies $\dots$
 \end{quote}
\end{remark}

\medskip

The work of Olsen leaves open several questions: 
\begin{itemize}
 \item What happens in the remaining cases? Olsen conjectured that, for any $q<1$, $\dinfmuq=0$ and that, for any $q<0$, $\dsupmuq=+\infty$.
 \item Can we say more for the upper $L^q$-dimension? Precisely, does there exist for any $q\geq 0$ a real number $f(q)$ such that 
a typical measure
$\mu\in\pk$ satisfies $\dsupmuq=f(q)$? In this case, Olsen conjectured that we cannot do better than $s_-\leq \dsupmuq\leq s^+$ for a typical
$\mu\in\pk$.
\end{itemize}

\smallskip

Our aim, in this paper, is to answer these questions. To this intention, we need to introduce two new ways to measure the size of a compact set.
The first one measures how behaves locally the upper box dimension, uniformly in $K$. 
For a set $E\subset\mathbb R^d$ and $r>0$, we denote by $\mathbf P_r(E)$ the largest number of pairwise disjoint balls of radius $r$ with centers in $E$
and by $\mathbf  N_r(E)$ the smallest number of balls of radius $r$ which are needed to cover $E$.
\begin{definition}
 Let $K$ be a compact subset of $\mathbb R^d$. The \emph{local uniform upper box dimension} of $K$ is the real number $\dboxsuplocunif(K)$ defined by
\begin{eqnarray*}
 \dboxsuplocunif(K)&=&\inf_{N\geq 1}\inf_{x_1,\dots,x_N\in K}\inf_{r_0> 0}\limsup_{r\to 0}\inf_{i=1,\dots,N}\frac{\log \mathbf P_r\big(K\cap B(x_i,r_0)\big)}{-\log r}\\
&=&\inf_{N\geq 1}\inf_{x_1,\dots,x_N\in K}\inf_{r_0> 0}\limsup_{r\to 0}\inf_{i=1,\dots,N}\frac{\log \mathbf N_r\big(K\cap B(x_i,r_0)\big)}{-\log r}
\end{eqnarray*}
\end{definition}
(the equivalence between these two definitions is a standard property of box-like dimensions.) It is easy to check that
$$\inf_{x\in K}\dboxinfloc(x,K)\leq \dboxsuplocunif(K)\leq \inf_{x\in K}\dboxsuploc(x,K).$$
We shall see later that these inequalities can be strict.

\smallskip

We will also need another variant of the lower box dimension, which measures both the size and the connectivity of a set. For $K$ a compact subset of
$\mathbb R^d$, let $\mathcal C_n(K)$ be the collection of the half-closed dyadic cubes of size $2^{-n}$ intersecting $K$, namely
$$\mathcal C_n(K)=\left\{\prod_{j=1}^n \left[\frac{k_j}{2^n};\frac{k_j+1}{2^n}\right);\ k_j\in\ZZ\textrm{ and }\prod_{j=1}^n \left[\frac{k_j}{2^n};\frac{k_j+1}{2^n}\right)\cap K\neq \varnothing\right\}.$$
 Let $K_n=\bigcup_{C\in\mathcal C_n(K)}C$
and let $C_n(K)$ be the number of connected components of $K_n$. We may observe that the sequence $(C_n(K))$ is nondecreasing.

\begin{definition}
 Let $K$ be a compact subset of $\mathbb R^d$. 
 The \emph{box separation index} of $K$ is the real number $\bsi(K)$ defined by 
$$\bsi(K)=\liminf_{n\to+\infty}\frac{\log(C_n(K))}{n\log 2}.$$
\end{definition}
It is clear that $\bsi(K)\leq\dboxinf(K)$. Heuristically speaking, $\bsi(K)$ is large if, for each $n\geq 1$, you need many cubes of size $2^{-n}$ to cover it and if these cubes
are far away from each other. 

\smallskip

It should be pointed out that neither the local uniform upper box dimension nor the box separation index can be considered as a dimension.
For instance, $E\subset F$ does not imply $\dboxsuplocunif(E)\leq\dboxsuplocunif(F)$ or $\bsi(E)\leq \bsi(F)$. 
As an example, if you set $E=[1,2]$ and $F=\{0\}\cup [1,2]$, then $\dboxsuplocunif(E)=1$ whereas  $\dboxsuplocunif(F)=0$.
Regarding the box separation index, $\bsi([0,1])=0$ (the $2^{-n}$-mesh cubes intersecting $[0,1]$ are connected), whereas $\bsi(K)=1/2$
when $K=\{0\}\cup\{1/n;\ n\geq 1\}$. This last fact follows easily from the standard proof of $\dboxinf(K)=1/2$.

\bigskip

Our first main result now reads:
\begin{theorem}\label{THMMAINLQSPECTRUM}
 Let $K$ be an infinite compact subset of $\mathbb R^d$. Write
\begin{eqnarray*}
 s_{\rm sep}&=&\bsi(K)\\
s_u&=&\dboxsuplocunif(K)\\
s&=&\dboxsup(K)\\
s_\mathcal P&=&\dimp(K).
\end{eqnarray*}
Then the following holds:
$$\begin{array}{c|c|c|}
   &\textrm{All measures $\mu\in\pk$ satisfy}&\textrm{A typical measure $\mu\in\pk$ satisfies} \\  \hline
&&\\
q> 1& 0\leq \dinfmuq\leq\dsupmuq\leq s_\mathcal P &\dsupmuq=s_u\\
&& \dinfmuq=0 \\
&&\\ \hline
&&\\
q\in (0,1)&0\leq \dinfmuq\leq\dsupmuq\leq s&\dsupmuq=s\\
&&\dinfmuq=0\\ 
&&\\ \hline
&&\\
q=0&0\leq \dinfmuq\leq\dsupmuq\leq s&\dsupmuq=s\\
&&\dinfmuq=s_{\rm sep}\\ 
&&\\ \hline
&&\\
q<0&0\leq \dinfmuq\leq\dsupmuq&\dsupmuq=+\infty\\
&&\dinfmuq=s_{\rm sep}.\\ 
&&\\ 
\end{array}
$$
\end{theorem}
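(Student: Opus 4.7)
The plan is to use Baire category on $\pk$: for each typical claim I exhibit a dense $G_\delta$ subset of $\pk$ on which it holds, with density obtained by approximating arbitrary measures by finitely supported atomic measures whose $I_\mu(r,q)$ is tuned to the target exponent. The deterministic upper bounds $\dsupmuq\leq s$ for $q\in[0,1]$ come from covering $K$ by $\mathbf N_r(K)$ balls and applying a H\"older-type estimate to the integrand. The bound $\dsupmuq\leq s_\mathcal P$ for $q>1$ is more involved: I would combine the standard inequality $\dsupmuq\leq\dboxsup(\supp\mu)$ (valid for $q>1$, via a Vitali covering argument) with the countable decomposition $K=\bigcup_k K_k$, $\dboxsup(K_k)\leq\dimp(K)+\veps$, applied piecewise by restricting $\mu$ to a dominant $K_k$. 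Throughout, openness of the auxiliary sets defining the $G_\delta$'s is ensured by working only with radii $r$ on a countable set where $\mu\mapsto I_\mu(r,q)$ is continuous.

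For the typical lower bounds in the $q\geq0$ regime, the targets $s$ and $s_u$ are reached by measures placing roughly $r^{-\alpha}$ equally weighted atoms on a packing of $K$ by disjoint $r$-balls. For $q\in[0,1)$ and target $s$, along a sequence $r_k\to 0$ with $\mathbf P_{r_k}(K)\approx r_k^{-(s-\veps)}$, placing mass $1/M_k$ on $M_k=r_k^{-(s-\veps)}$ well-separated atoms yields $I_\mu(r_k,q)\approx M_k^{1-q}$, the desired exponent. For $q>1$ and target $s_u$, the weaker value arises because the integrand $\mu(B(x,r))^{q-1}$ with exponent $q-1>0$ rewards concentration: the atoms must cluster near finitely many centers where the packing is simultaneously rich, which is precisely the content of $\dboxsuplocunif(K)$. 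The matching typical upper bound $\dsupmuq\leq s_u$ follows by arranging through the Baire construction that all but $\veps$ of the mass of a typical $\mu$ lies near $N$ prescribed centers witnessing $s_u+\veps$, the remainder contributing only $O(\veps)$ to $I_\mu(r,q)$ since the integrand is bounded by $1$. The claim $\dinfmuq=0$ for $q>0$ is obtained by placing nearly all mass on a single atom at a sequence of chosen scales, which forces $I_\mu(r,q)$ to stay bounded below by a constant and hence the ratio $\log I_\mu(r,q)/((q-1)\log r)$ to tend to $0$ along that sequence. The divergence $\dsupmuq=+\infty$ for $q<0$ is produced by atoms with geometrically small weight $\lambda_k$, near which $\mu(B(x,r))^{q-1}\geq\lambda_k^{q-1}$ is enormous, so $I_\mu(r,q)$ grows faster than any polynomial $r^{-A}$.

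The hardest part is the identity $\dinfmuq=s_{\rm sep}$ for $q\leq 0$, requiring both a lower and an upper bound typically. For the lower bound, distinct connected components of $K_n$ lie at distance at least some $\delta_n>0$, so for $r<\delta_n/3$ and $x$ in a component $A_j$ of mass $p_j$, one has $\mu(B(x,r))\leq p_j$ and therefore $I_\mu(r,q)\geq\sum_{j:p_j>0}p_j^q\geq N_n^{1-q}$ by convexity (valid for $q\leq 0$), where $N_n$ counts the $\mu$-charged components at generation $n$; imposing via a countable intersection of dense open sets that a typical $\mu$ charges every component at every generation $n\geq N$ then yields $\dinfmuq\geq\bsi(K)$. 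For the matching upper bound, along the sparse sequence $(n_k)$ realising $\bsi(K)=\liminf\log C_{n_k}(K)/(n_k\log 2)$, one needs atomic measures with one well-placed atom per component at generation $n_k$, producing $I_\mu(2^{-n_k},q)\approx C_{n_k}(K)^{1-q}$; the delicate point is to reconcile this sparse equidistribution requirement along $(n_k)$ with the dense charging requirement at all other generations inside a single dense $G_\delta$, which I would handle via a nested construction prescribing behavior at a new scale at each Baire stage while preserving previous constraints.
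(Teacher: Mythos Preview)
Your Baire-category framework and several of the individual constructions (charging every component for $\dinfmuq\geq s_{\rm sep}$, tiny atoms for $\dsupmuq=+\infty$ when $q<0$, packings for the $\dsupmuq$ lower bounds) match the paper. But two steps have real gaps.

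The main one is the upper bound $\dinfmuq\leq s_{\rm sep}$ for $q\leq 0$. Placing one atom per component at generation $n_k$ does give the right value of $I_\nu(2^{-n_k},q)$ for that particular atomic $\nu$, but you then have to pass to an open neighbourhood in $\pk$. For $q<1$ the stability of $I_\nu(r,q)$ under weak perturbation requires $\nu\big(B(x,r)\big)>0$ for \emph{every} $x\in K$; otherwise a nearby measure can place a speck of mass where its ball has arbitrarily small measure, and the integrand $\nu'(B(x,r))^{q-1}$ blows up. A single component of $K_n$ may contain arbitrarily many dyadic cubes, so one atom per component leaves most cubes uncharged at scale $2^{-n}$. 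The paper resolves this with a graph-theoretic device: viewing the cubes of each component as a connected adjacency graph, it places an atom in \emph{every} cube, with weights chosen (Lemmas~\ref{LEMGRAPH1}--\ref{LEMGRAPH2}) so that each non-maximal atom has a neighbouring cube of much larger weight; then in $I_\nu(3\cdot 2^{-n},q)$ every light atom is absorbed by its heavy neighbour's ball and the total behaves like $C_n(K)^{1-q}$ (Lemma~\ref{LEMIMUBSI}). Your sketch contains no mechanism replacing this, and the ``delicate point'' you flag (reconciling scales) is not the actual obstruction.

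The second gap is $\dinfmuq=0$ for $q\in(0,1)$. You argue that concentration forces $I_\mu(r,q)$ to be bounded below by a constant; but for $q<1$ one has $I_\mu(r,q)\geq 1$ automatically, and what is needed is that $I_\mu(r_n,q)$ stay bounded \emph{above} (or grow subpolynomially). Concentration does not give this, because the small leftover mass sits where $\mu(B(x,r))$ is tiny and the integrand $\mu(B(x,r))^{q-1}$ is large. The paper fixes this by perturbing with an anti-Frostman measure $m$ satisfying $m(B(x,r))\geq Cr^t$ for all $x\in K$ (Lemma~\ref{LEMANTIFROSTMAN}), which uniformly bounds $\nu(B(x,r))$ from below. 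This is the same issue in another guise: your plan to obtain openness by choosing radii where $\mu\mapsto I_\mu(r,q)$ is continuous fails for $q<1$, since that map is not weakly continuous on $\pk$; the paper's substitute is the explicit stability estimate of Corollary~\ref{CORTOPO3}, whose hypothesis $\mu(B(x,r))>0$ for all $x\in K$ is precisely what both of your constructions omit.
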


The typical values of $\dsupmu(q)$, for $q>1$, and of $\dinfmuq$, for $q\leq 0$, are very interesting. Indeed, they did not take the worst possible values, what is rather atypical in analysis! 
From a technical point of view, these results force us to prove two inequalities instead of just one, since we cannot use the results of the second column.
They also point out that the typical $L^q$-dimensions of measures supported by $K$ depend heavily both on the size and on the local structure of $K$,
since they involve the local uniform upper box dimension and the box separation index of $K$.

\smallskip

As one can guess, the proof of Theorem \ref{THMMAINLQSPECTRUM} requires delicate constructions of measures as well as a careful
examination of the local structure of a compact set. More surprizingly, it also involves elements of graph theory. It should be noted that
the unusual notions of dimension used in Theorem \ref{THMMAINLQSPECTRUM} can be easily computed for natural compact sets arising in multifractal analysis:
Cantor sets, self-similar compact sets, finite disjoint unions of these sets,...

\bigskip

We turn now to the $L^1$-case. This is maybe the most important case. In the literature, the $L^1$-dimensions are also known as 
the (upper and lower) information dimension or as the (upper and lower) entropy. 
From Theorem \ref{THMMAINLQSPECTRUM}, we can get immediately the typical value of $\dinfmu(1)$. However, that of $\dsupmu(1)$ is unpredictable: it is not clear
if it should be $s_u$, like for $q>1$, or $s$, like for $q<1$.  It turns out that neither $s_u$ nor $s$ is convenient. In fact, the situation breaks down dramatically
for the upper $L^1$-dimension: in general, there is no typical value for $\dsupmu(1)$! We cannot say more that $\dsupmu(1)$ belongs typically to some interval, and we shall give soon
the optimal interval. Nevertheless, we need to introduce yet another couple of definitions.

\begin{definition}
Let $K$ be a compact subset of $\mathbb R^d$.  The \emph{convex upper box dimension} of $K$ is the real number $\dboxsupconv(K)$ defined by
$$ \dboxsupconv(K)=\inf_{N\geq 1}\inf_{x_1,\dots,x_N\in K}\inf_{\rho> 0}\inf_{p_i>0,\sum_i p_i=1}\limsup_{r\to 0}\frac{\sum_{i}p_i\log \mathbf P_r\big(K\cap B(x_i,\rho)\big)}{-\log r}.$$
The \emph{maximal convex upper box dimension} of $K$ is the real number $\dboxsupconvmax(K)$ defined by
$$ \dboxsupconvmax(K)=\sup_{y\in K,\rho>0}\dboxsupconv\big(K\cap B(y,\rho)\big).$$
\end{definition}

It is clear from the definition that
$$\dboxsuplocunif(K)\leq \dboxsupconv(K)\leq \dboxsupconvmax(K).$$
We shall see later that these inequalities can be strict.

\smallskip

Our main theorem on $L^1$ now reads:
\begin{theorem}\label{THMMAINL1}
Let $K$ be an infinite compact subset of $\mathbb R^d$. Then a typical measure $\mu\in\pk$ satisfies
$$\dinfmu(1)=0\textrm{ and }\dsupmu(1)\in[\dboxsupconv(K),\dboxsupconvmax(K)].$$
Moreover, if $[a,b]$ is any interval such that a typical measure $\mu\in\pk$ satisfies $\dsupmu(1)\in[a,b]$, then 
$$a\leq \dboxsupconv(K)\textrm{ and }b\geq \dboxsupconvmax(K).$$
\end{theorem}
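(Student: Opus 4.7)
The statement $\dinfmu(1)=0$ for a typical measure follows along the same lines as the corresponding line of Theorem~\ref{THMMAINLQSPECTRUM}: atomic probability measures are dense in $\pk$ and have $I_\mu(r,1)$ eventually constant, so $\dinfmu(1)=0$, while the sets $\{\mu:\dinfmu(1)<\varepsilon\}$ can be displayed as a countable intersection of unions of open sets by fixing dyadic scales. I therefore focus on the $\dsupmu(1)$-part. Rewriting $I_\mu(r,1)=-H_\mu(r)$ with $H_\mu(r)=-\int_K\log\mu(B(x,r))d\mu(x)\ge 0$, what must be controlled is $\dsupmu(1)=\limsup_{r\to 0}H_\mu(r)/(-\log r)$.

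For the upper bound $\dsupmu(1)\le\dboxsupconvmax(K)$ for a typical measure, I would fix $s>\dboxsupconvmax(K)$ and aim to show $\{\mu:\dsupmu(1)\le s\}$ is residual. Since $\dboxsupconv(K\cap B(y,\rho))<s$ for every ball $B(y,\rho)$ centered in $K$, one can at each such ball pick centers, a radius, and weights witnessing $\limsup_r(\sum_i p_i\log\mathbf P_r)/(-\log r)<s$. Covering $K$ by finitely many such balls and using Jensen's inequality on the pieces of $H_\mu(r)$ inside each one, combined with a perturbation and density argument in $\pk$, would produce a dense $G_\delta$ on which $H_\mu(r)\le s(-\log r)$ eventually. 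For the lower bound $\dsupmu(1)\ge\dboxsupconv(K)$ typically, I would adapt the scheme already used for $\dsupmuq$ with $q\in(0,1)$ in Theorem~\ref{THMMAINLQSPECTRUM}: inside any open subset of $\pk$ one builds perturbations that spread enough mass on a maximal $r_k$-packing of $K$, along a prescribed sequence $r_k\to 0$, to force $H_\mu(r_k)\ge(\dboxsupconv(K)-\varepsilon)(-\log r_k)$. It is here that the definition of $\dboxsupconv$ as an infimum over \emph{all} convex combinations of packings plays the essential role: no choice of weights assigned to the packing can defeat the entropy bound.

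The optimality assertions are then handled by localizing both of the above steps. Given $a>\dboxsupconv(K)$, pick $\varepsilon>0$ and a witness $(N,x_1,\dots,x_N,\rho,(p_i))$ for $\dboxsupconv(K)<a-\varepsilon$ in its definition, let $U$ be a neighborhood of $\sum_i p_i\delta_{x_i}$ in $\pk$, and reproduce the argument of the upper-bound step inside $U$ using these specific data; this yields a residual subset of $U$ on which $\dsupmu(1)\le a-\varepsilon$, showing in particular that $\{\mu:\dsupmu(1)\ge a\}$ is not residual in $\pk$. Symmetrically, given $b<\dboxsupconvmax(K)$, choose $y_0,\rho_0$ with $\dboxsupconv(K\cap B(y_0,\rho_0))>b+\varepsilon$, restrict to the open set $U=\{\mu\in\pk:\mu(B(y_0,\rho_0))>1-\delta\}$, and apply the lower-bound step to the compact set $K\cap\overline{B(y_0,\rho_0)}$ to obtain a residual subset of $U$ on which $\dsupmu(1)>b+\varepsilon/2$. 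The main obstacle will be the lower-bound step and its localized version: one needs an explicit measure-construction whose entropy lower bound survives under weak perturbations, which is delicate because $\mu\mapsto H_\mu(r)$ is neither upper nor lower semi-continuous on $\pk$.
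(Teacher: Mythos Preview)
Your overall architecture is right: the four sub-proofs (typical lower bound, typical upper bound, and the two optimality statements) match the paper's Section~6, and your optimality arguments are essentially the paper's. But there is a genuine gap in your lower-bound step.

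\medskip

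\textbf{The lower bound $\dsupmu(1)\ge\sconv$.} You propose to ``adapt the scheme already used for $\dsupmuq$ with $q\in(0,1)$'', namely to perturb a finite measure $\mu_n$ by a small piece $\veps_n m_n$ supported on an $r_n$-packing. This works for $q\in(0,1)$ because the small piece contributes $\veps_n^q r_n^{t(q-1)}$ to $I_\nu(r_n,q)$, and $\veps_n^q\gg\veps_n$. For $q=1$ the bookkeeping is additive: the extra mass $\veps_n$ contributes at most $\veps_n\cdot\big(t(-\log r_n)+O(\log(1/\veps_n))\big)$ to $H_\nu(r_n)$, so after dividing by $-\log r_n$ one gets only $\veps_n t$, which tends to $0$ with $\veps_n$. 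A small perturbation cannot push the upper $L^1$-dimension up. Relatedly, ``spreading mass on a maximal $r_k$-packing of $K$'' would at best target $\dboxsup(K)$, not $\sconv$; the definition of $\sconv$ involves packings of \emph{small balls} $B(x_i,\rho)$ weighted by~$p_i$, not a global packing.

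The paper's device is to build the high-entropy measures directly rather than by perturbation. Fix a dense sequence $(x_i)$, $\rho_n\to 0$, and weights $p_1,\dots,p_n$; by the definition of $\sconv$ there is $r_n<\rho_n$ with $\sum_i p_i\log P_i\ge t(-\log r_n)$, where $P_i=\mathbf P_{r_n}(K\cap B(x_i,\rho_n))$. Set
\[
\mu_{n,(p_i)}=\sum_{i=1}^n \frac{p_i}{P_i}\sum_{j=1}^{P_i}\delta_{x_i^j},
\]
with $x_i^j$ centers of a maximal $r_n$-packing of $K\cap B(x_i,\rho_n)$. Then $H_{\mu_{n,(p_i)}}(r_n)=\sum_i p_i\log P_i-\sum_i p_i\log p_i\ge t(-\log r_n)$, and Corollary~\ref{CORTOPOL1} transfers this to a small $L$-ball. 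The density of $\bigcup_n\{\mu_{n,(p_i)}:\ (p_i)\textrm{ a probability vector}\}$ comes from Lemma~\ref{LEMDENS1}, not from proximity to a given finite measure. This two-level structure (weights $p_i$ on balls, uniform on packings inside) is exactly what makes $\sconv$, rather than $s_u$ or $\dboxsup(K)$, appear.

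\medskip

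\textbf{Two smaller points.} First, for $\dinfmu(1)=0$ the paper does not argue via atoms and $G_\delta$-sets: it simply uses monotonicity in $q$ (Lemma~\ref{LEMJENSEN}) together with the case $q=1/2$ of Theorem~\ref{THMMAINLQSPECTRUM}, which gives $0\le\dinfmu(1)\le\dinfmu(1/2)=0$ on a residual set in one line. Second, for the typical upper bound $\dsupmu(1)\le\sconvmax$, ``covering $K$ by balls and applying Jensen on the pieces'' is the right intuition, but you must make explicit that you then \emph{restrict} to the open set of measures $\mu$ whose mass on each $B(x_n^i,\delta_{n,N}/2)$ is within $\veps$ of $q_n p_n^i$, and let $(q_n)$ run over all probability vectors; otherwise the Jensen bound involves $\mu$'s actual masses, not the witness weights, and you cannot conclude. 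This is precisely the role of the open sets $\mathcal U_{N,\mathbf q}$ in the paper.
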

As before, even though the definition of the (maximal) convex upper box dimension is not very appealing, it can be easily computed
for many compact sets, like Cantor sets, self-similar compact sets, finite disjoint unions of these sets...
Here is an example to see how Theorem \ref{THMMAINL1} reads on 
a very easy compact set.
It points out that we cannot expect to get a typical value for the upper $L^1$-dimension. This happens only for this value of $q$.
\begin{example}
 Let $K=\{0\}\cup[1,2]$. Then a typical measure $\mu\in\pk$ satisfies 
 $\dsupmu(1)\in[0,1]$ and this interval is the best possible.
\end{example}

\bigskip

The paper is organized as follows. In Section 2, we introduce the tools which are needed throughout the paper.
In Section 3,4,5, we prove Theorem \ref{THMMAINLQSPECTRUM}, whereas Section 6 is devoted to the study of the 
upper $L^1$-dimension. We conclude in Section 7 by remarks and open questions.

\section{Preliminaries} \label{SECPRELIMINARIES}
\subsection{The topology on $\mathcal P(K)$}

Throughout this paper, $\mathcal P(K)$ will be endowed with the weak topology. It is well known (see for instance \cite{Par67})
that this topology is completely metrizable by the Fortet-Mourier distance defined as follows. Let $\textrm{Lip}(K)$ denote the family 
of Lipschitz functions $f:K\to\mathbb R$, with $|f|\leq 1$ and $\textrm{Lip}(f)\leq 1$, where $\textrm{Lip}(f)$ denotes
the Lipschitz constant of $f$. The metric $L$ is defined by
$$L(\mu,\nu)=\sup_{f\in\textrm{Lip}(K)}\left|\int fd\mu-\int fd\nu\right|$$
for any $\mu,\nu\in\mathcal P(K)$. We endow $\mathcal P(K)$ with the metric $L$. In particular, for $\mu\in\mathcal P(K)$
and $\delta>0$, $B_{L}(\mu,\delta)=\{\nu\in\mathcal P(K);\ L(\mu,\nu)<\delta\}$ will stand for the ball with center at $\mu$ and radius
equal to $\delta$.

We shall use several times the following lemma.
\begin{lemma}\label{LEMTOPO1}
 For any $\alpha\in(0,1)$, for any $\beta>0$, there exists $\delta>0$ such that, for any $E$ a Borel subset of $K$, for any
$\mu,\nu\in\mathcal P(K)$, 
$$L(\mu,\nu)<\delta\implies \mu(E)\leq\nu\big(E(\alpha)\big)+\beta,$$
where $E(\alpha)=\{x\in K;\ \textrm{dist}(x,E)<\alpha\}$.
\end{lemma}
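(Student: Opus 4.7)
The plan is to test the Fortet--Mourier distance against a Lipschitz cutoff that sandwiches between the indicators of $E$ and of its open $\alpha$-neighborhood $E(\alpha)$. Concretely, I would introduce
\[
g(x) = \max\bigl(0,\ \alpha - \mathrm{dist}(x,E)\bigr), \qquad x \in K.
\]
Since $x\mapsto\mathrm{dist}(x,E)$ is $1$-Lipschitz and the map $t\mapsto\max(0,\alpha-t)$ is also $1$-Lipschitz, $g$ is $1$-Lipschitz. Moreover $0\le g\le \alpha<1$, so $|g|\le 1$. Hence $g\in\mathrm{Lip}(K)$, and is therefore an admissible test function for the Fortet--Mourier metric.

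Next, I would read off the key sandwich: for every $x\in E$ we have $g(x)=\alpha$, while for $x\notin E(\alpha)$ we have $\mathrm{dist}(x,E)\ge\alpha$ and so $g(x)=0$. This gives the pointwise inequality
\[
\alpha\,\mathbf{1}_E(x)\ \le\ g(x)\ \le\ \alpha\,\mathbf{1}_{E(\alpha)}(x).
\]
Integrating against $\mu$ and $\nu$ respectively yields $\alpha\,\mu(E)\le\int g\,d\mu$ and $\int g\,d\nu\le\alpha\,\nu\bigl(E(\alpha)\bigr)$. Combining these and using the definition of $L$ on the admissible function $g$:
\[
\alpha\,\mu(E) \ \le\ \int g\,d\nu + \left(\int g\,d\mu - \int g\,d\nu\right)\ \le\ \alpha\,\nu\bigl(E(\alpha)\bigr) + L(\mu,\nu).
\]

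Dividing by $\alpha$ and choosing $\delta:=\alpha\beta$, any pair $\mu,\nu$ with $L(\mu,\nu)<\delta$ will satisfy $\mu(E)\le\nu(E(\alpha))+\beta$, as required; note that $\delta$ depends only on $\alpha$ and $\beta$, not on $E$, $\mu$ or $\nu$. There is no real obstacle in the argument; the one subtlety worth flagging is the normalization step, namely using the factor $\alpha$ (rather than the more natural $1-\mathrm{dist}(\cdot,E)/\alpha$) so that the test function simultaneously satisfies the two constraints $|g|\le 1$ and $\mathrm{Lip}(g)\le 1$ imposed by the definition of $\mathrm{Lip}(K)$. The assumption $\alpha<1$ is precisely what makes this trade-off work.
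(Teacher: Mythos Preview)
Your proof is correct and is essentially identical to the paper's own argument: the cutoff $g(x)=\max(0,\alpha-\mathrm{dist}(x,E))$ is exactly the function $f$ defined piecewise in the paper, and both proofs conclude with the same choice $\delta=\alpha\beta$ via the same chain of inequalities.
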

\begin{proof}
 We set
$$f(t)=\left\{
\begin{array}{ll}
 \alpha&\textrm{provided }t\in \overline E\\
\alpha-\textrm{dist}(x,E)&\textrm{provided }0<\textrm{dist}(x,E)\leq\alpha\\
0&\textrm{otherwise}.
\end{array}\right.$$
Then $f$ is Lipschitz, with $|f|\leq 1$ and $\textrm{Lip}(f)\leq 1$. Thus,
\begin{eqnarray*}
 \mu(E)&\leq&\frac 1\alpha\int fd\mu\\
&\leq&\frac 1\alpha \left[\int fd\nu+\delta\right]\\
&\leq&\nu \big(E(\alpha)\big)+\frac\delta\alpha.
\end{eqnarray*}
Hence, it suffices to take $\delta=\alpha\beta$.
\end{proof}

Our first application of Lemma \ref{LEMTOPO1} is that a small perturbation of a finite measure $\mu$ does not change dramatically
the value of $I_\mu(r,q)$, provided we allow to change slightly the radius. Here is the statement that we can get for $q> 1$. $\fk$ denotes the set of probability measures with finite support in $K$.
\begin{corollary}\label{CORTOPO4}
Let $q> 1$. There exists $C_q>0$ such that, for any $\mu\in\mathcal F(K)$, for any $r>0$, there exists $\delta>0$ such that, for any  $\nu\in\pk$ with $L(\mu,\nu)<\delta$, 
$$I_\nu(r,q)\leq C_q I_\mu(2r,q)\textrm{ and }I_\nu(2r,q)\geq C_q^{-1}I_\mu(r,q).$$
\end{corollary}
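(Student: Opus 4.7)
The plan is to apply Lemma \ref{LEMTOPO1} in both directions (using symmetry of $L$), once to control the \emph{integrand} and once to control the \emph{integrating measure}. I shall prove that $C_q = 2^q$ works.

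Fix $r > 0$ and set $\alpha = \min(r/2, 1/2) \in (0,1)$. Given any $\beta, \beta' > 0$, Lemma \ref{LEMTOPO1} provides a $\delta > 0$ such that $L(\mu,\nu) < \delta$ forces, for every Borel $E \subset K$,
$$\mu(E) \leq \nu(E(\alpha)) + \beta' \quad \text{and} \quad \nu(E) \leq \mu(E(\alpha)) + \beta',$$
and in particular $\nu(B(x,r)) \leq \mu(B(x, r+\alpha)) + \beta \leq \mu(B(x, 3r/2)) + \beta$ and symmetrically for $\mu$ and $\nu$ swapped. Raising to the power $q-1 > 0$ and using the elementary inequality $(a+b)^{q-1} \leq 2^{q-1}(a^{q-1} + b^{q-1})$ gives
$$\nu(B(x,r))^{q-1} \leq 2^{q-1} \mu(B(x, 3r/2))^{q-1} + 2^{q-1} \beta^{q-1},$$
and the analogous bound with $\mu, \nu$ swapped.

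Next, to replace $d\nu$ by $d\mu$ in $\int \mu(B(x, 3r/2))^{q-1}\, d\nu(x)$, set $g(x) = \mu(B(x, 3r/2))^{q-1} \in [0,1]$. By the layer-cake formula and the symmetric version of Lemma \ref{LEMTOPO1},
$$\int g\, d\nu = \int_0^1 \nu(\{g > t\})\, dt \leq \int_0^1 \mu\bigl(\{g > t\}(\alpha)\bigr)\, dt + \beta'.$$
Observe that if $y \in \{g > t\}$ and $|x - y| < \alpha$, then $B(y, 3r/2) \subset B(x, 3r/2 + \alpha) \subset B(x, 2r)$, hence $\mu(B(x, 2r))^{q-1} > t$. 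Therefore $\{g > t\}(\alpha) \subset \{x: \mu(B(x, 2r))^{q-1} > t\}$, which in turn yields
$$\int g\, d\nu \leq I_\mu(2r, q) + \beta'.$$
Combining these two steps,
$$I_\nu(r,q) \leq 2^{q-1}\bigl(I_\mu(2r,q) + \beta'\bigr) + 2^{q-1}\beta^{q-1}.$$

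The key observation is that $\mu \in \fk$ is finitely supported, so $I_\mu(2r,q) > 0$ (each atom contributes a positive term). Choose $\beta, \beta'$ small enough that $2^{q-1}\beta' + 2^{q-1}\beta^{q-1} \leq I_\mu(2r, q)$, and let $\delta$ be the corresponding bound from Lemma \ref{LEMTOPO1}. This gives $I_\nu(r,q) \leq 2^q\, I_\mu(2r,q)$, as required. The second inequality is proved in exactly the same way: one shows
$$I_\mu(r,q) \leq 2^{q-1}\bigl(I_\nu(2r,q) + \beta'\bigr) + 2^{q-1}\beta^{q-1},$$
and chooses $\beta, \beta'$ so that the additive error is at most $\tfrac{1}{2} I_\mu(r,q)$ (again positive because $\mu$ is finitely supported), yielding $I_\nu(2r,q) \geq 2^{-q} I_\mu(r,q)$.

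The only mildly delicate point is Step 2: converting the $\nu$-integral of a $\mu$-dependent function into a $\mu$-integral requires the layer-cake decomposition together with the geometric inclusion for the $\alpha$-thickening of super-level sets. Everything else is a direct combination of Lemma \ref{LEMTOPO1} with elementary power-mean inequalities, and the universal constant $C_q = 2^q$ emerges naturally from the factor $2^{q-1}$ produced by $(a+b)^{q-1} \leq 2^{q-1}(a^{q-1}+b^{q-1})$ together with the factor $2$ absorbed when swallowing the additive error.
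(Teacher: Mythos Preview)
Your proof is correct and takes a genuinely different route from the paper's. The paper exploits the finite support of $\mu$ structurally: it picks $t\in(r,2r)$ and $\eta>0$ so that $[t-\eta,t+\eta]$ avoids all pairwise distances in $\supp(\mu)$, which makes $\mu(B(\cdot,t))$ locally constant near each atom; it then decomposes the $\nu$-integral over the neighbourhoods $B(x,\eta)$, $x\in\supp(\mu)$, and the complement, controlling each piece separately via Lemma~\ref{LEMTOPO1}. Your argument instead stays at the level of general measures: you bound the integrand pointwise using Lemma~\ref{LEMTOPO1} and $(a+b)^{q-1}\le 2^{q-1}(a^{q-1}+b^{q-1})$, and then transfer the integrating measure from $\nu$ to $\mu$ (and back) via the layer-cake formula together with the inclusion $\{g>t\}(\alpha)\subset\{\mu(B(\cdot,2r))^{q-1}>t\}$. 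The finite-support hypothesis enters only to guarantee $I_\mu(r,q),\,I_\mu(2r,q)>0$ so that the additive errors can be absorbed --- and in fact this positivity holds for every $\mu\in\pk$ when $q>1$, so your argument actually proves the corollary for arbitrary $\mu\in\pk$, a mild strengthening. The paper's approach, by contrast, is more hands-on and makes the mechanism visibly tied to the atoms; it also serves as a template for the subsequent Corollaries~\ref{CORTOPO3} and~\ref{CORTOPOL1}, where the discrete decomposition is harder to avoid.

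Two cosmetic remarks: your bookkeeping gives $I_\nu(r,q)\le(2^{q-1}+1)I_\mu(2r,q)$ rather than exactly $2^q I_\mu(2r,q)$, but since $2^{q-1}+1\le 2^q$ the claimed constant is fine; and when you invoke Lemma~\ref{LEMTOPO1} inside the layer-cake integral you are implicitly using that the $\delta$ there is uniform over all Borel $E$, which is indeed how the lemma is stated.
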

\begin{proof}
We begin by fixing a pair $(t,\eta)$ with $t\in(r,2r)$ and $\eta>0$ such that
$$\left\{
\begin{array}{l}
 [t-\eta,t+\eta]\cap \big\{\|x-y\|;\ x,y\in\supp(\mu)\big\}=\varnothing;\\
\forall x,y\in\supp(\mu),\ x\neq y\implies \|x-y\|>2\eta.
\end{array}
\right.$$
Observe that this choice of $t$ and $\eta$ guarantees that, 
for any $x\in\supp(\mu)$ and any $z\in B(x,\eta)$, then
$$\supp(\mu)\cap B(x,t)=\supp(\mu)\cap B(z,t)$$
so that $\mu\big(B(x,t)\big)=\mu\big(B(z,t)\big).$ Now, suppose that $\delta>0$ has been chosen so small that any $\nu\in\pk$ with $L(\mu,\nu)<\delta$
satisfies
\begin{itemize}
\item for any $x\in\supp(\mu)$, 
$$\frac12 \mu(\{x\})=\frac 12 \mu\big(B(x,\eta/2)\big)\leq \nu\big(B(x,\eta)\big)\leq 2 \mu\big( B(x,2\eta)\big)= 2\mu(\{x\})$$
(this is possible by applying Lemma \ref{LEMTOPO1} with $\alpha=\eta/2$ and $\beta=\inf_{x\in\supp(\mu)}\mu(\{x\})$).
\item for any $x\in\supp(\mu)$, 
\begin{eqnarray*}
\nu\big(B(x,\eta)\big)&\geq&\mu\big(B(x,\eta/2)\big)-\frac{I_\mu(2r,q)}{N}\\
&\geq&\mu(\{x\})-\frac{I_\mu(2r,q)}{N}
\end{eqnarray*}
where $N$ denotes the cardinal of the support of $\mu$. What is important here is that,
setting $Y=\bigcup_{x\in\supp(\mu)}B(x,\eta)$, one gets
$$\nu(Y)\geq 1-I_\mu(2r,q).$$
\item for any $x\in\supp(\mu)$ and any $z\in B(x,\eta)$, 
$$\nu\big(B(z,r)\big)\leq 2\mu\big(B(z,t)\big)=2\mu\big(B(x,t)\big)$$
$$\nu\big(B(z,2r)\big)\geq\frac 12\mu\big(B(z,t)\big)=\frac12\mu\big(B(x,t)\big).$$
Again, this is possible thanks to Lemma \ref{LEMTOPO1} and because $\mu$ has finite support.
\end{itemize}
Hence, on the one hand, we get
\begin{eqnarray*}
I_\nu(r,q)&=&\sum_{x\in\supp(\mu)} \int_{B(x,\eta)} \nu\big(B(z,r)\big)^{q-1}d\nu(z)+\int_{Y^c}\nu\big(B(z,r)\big)^{q-1}d\nu(z)\\
&\leq&2^{q}\sum_{x\in\supp(\mu)}\mu\big(B(x,t)\big)^{q-1}\mu(\{x\})+\nu(Y^c)\\
&\leq&2^{q}I_\mu(t,q)+I_\mu(2r,q)\\
&\leq&(2^q+1)I_\mu(2r,q).
\end{eqnarray*}
On the other hand, we also have
\begin{eqnarray*}
I_\nu(2r,q)&\geq&\sum_{x\in\supp(\mu)}\int_{B(x,\eta)}\nu\big(B(z,2r)\big)^{q-1}d\nu(z)\\
&\geq&2^{-q}\sum_{x\in\supp(\mu)}\mu\big(B(x,t)\big)^{q-1}\mu(\{x\})\\
&\geq&2^{-q}I_\mu(t,q)\geq2^{-q}I_\mu(r,q).
\end{eqnarray*}
\end{proof}

When $q<1$, the situation is more difficult. Indeed, for an arbitrary close to $\mu$ measure $\nu\in\pk$, $\nu\big(B(x,r))^{q-1}$ may be very large even if
$x\notin\supp(\mu)$.  However, adding an assumption to avoid singularities, we are able to prove the following corollary.
\begin{corollary}\label{CORTOPO3}
 Let $q<1$. There exists $C_q>0$ such that, for any $\mu\in\fk$, for any $r>0$ with $\mu\big(B(x,r)\big)>0$ for any $x\in K$,
there exists $\delta>0$ such that, for any $\nu\in\pk$ with $L(\mu,\nu)<\delta$,
$$I_\nu(2r,q)\leq C_q I_\mu(r,q)\textrm{ and }I_{\nu}(r,q)\geq C_q^{-1} I_{\mu}(2r,q).$$
\end{corollary}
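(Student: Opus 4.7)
The argument closely parallels the proof of Corollary \ref{CORTOPO4}, with one crucial sign reversal: since $q-1<0$, the integrand $\nu(B(z,r))^{q-1}$ is large where $\nu(B(z,r))$ is small, so I now need uniform \emph{lower} bounds on $\nu$-balls rather than upper ones. This is exactly what the hypothesis $\mu(B(x,r))>0$ for every $x\in K$ enables: because $\supp(\mu)$ is finite, the function $z\mapsto\mu(B(z,t))$ takes only finitely many values for any $t\ge r$, and all of them are positive, so $c_\mu:=\inf_{z\in K}\mu(B(z,t))>0$.

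I would begin exactly as in the previous proof by fixing $t\in(r,2r)$ and $\eta>0$ such that $[t-\eta,t+\eta]$ avoids every interpoint distance in $\supp(\mu)$ and such that the balls $B(x,2\eta)$, $x\in\supp(\mu)$, are pairwise disjoint; this guarantees $\mu(B(z,t))=\mu(B(x,t))$ whenever $z\in B(x,\eta)$. Setting $Y=\bigcup_{x\in\supp(\mu)}B(x,\eta)$, I would then apply Lemma \ref{LEMTOPO1} several times, with a small auxiliary parameter $\beta>0$ and various choices of $\alpha$, to select $\delta>0$ so that any $\nu\in\pk$ with $L(\mu,\nu)<\delta$ satisfies simultaneously: (A) $\tfrac12\mu(B(z,t))\le\nu(B(z,2r))$ and $\nu(B(z,r))\le 2\mu(B(z,t))$ for every $z\in K$ (apply the lemma to $E=B(z,t)$ with $\alpha=2r-t$, and to $E=B(z,r)$ with $\alpha=t-r$, together with $\beta\le c_\mu/2$); (B) $\tfrac12\mu(\{x\})\le\nu(B(x,\eta))\le 2\mu(\{x\})$ for each $x\in\supp(\mu)$; and (C) $\nu(Y^c)\le\beta$.

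For the upper bound $I_\nu(2r,q)\le C_q I_\mu(r,q)$, raising (A) to the $(q-1)$-th power yields
$$I_\nu(2r,q)\le 2^{1-q}\int_K\mu(B(z,t))^{q-1}\,d\nu(z),$$
which I would split as $\int_Y+\int_{Y^c}$. On $Y$ the integrand is constant on each $B(x,\eta)$, so that piece equals $\sum_{x\in\supp(\mu)}\mu(B(x,t))^{q-1}\nu(B(x,\eta))$, which is at most $2I_\mu(r,q)$ by (B) combined with $\mu(B(x,t))^{q-1}\le\mu(B(x,r))^{q-1}$ (reversed because $q-1<0$). On $Y^c$ the integrand is at most $c_\mu^{q-1}$, so that piece is at most $c_\mu^{q-1}\beta$; a further shrinking of $\beta$ arranges $c_\mu^{q-1}\beta\le I_\mu(r,q)$, which is where $\delta$ legitimately acquires its dependence on $\mu$, $r$ and $q$. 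The reverse inequality $I_\nu(r,q)\ge C_q^{-1}I_\mu(2r,q)$ is proved symmetrically: restrict integration to $Y$, use $\nu(B(z,r))^{q-1}\ge 2^{q-1}\mu(B(z,t))^{q-1}$ from (A) and the lower half of (B), and finally invoke $\mu(B(x,t))^{q-1}\ge\mu(B(x,2r))^{q-1}$.

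The main technical obstacle, absent from the $q>1$ case in Corollary \ref{CORTOPO4}, is the control of the $Y^c$ contribution: there one could trivially bound $\nu(B(z,r))^{q-1}\le 1$, whereas here the integrand can reach $c_\mu^{q-1}$, a $\mu$-dependent quantity that blows up as $c_\mu\to 0$. It is precisely the hypothesis $\mu(B(\cdot,r))>0$ that rescues us, by providing the strictly positive uniform lower bound $c_\mu$, and the fact that $\delta$ is permitted to depend on $\mu$ lets $\beta$ absorb this factor.
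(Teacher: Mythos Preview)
Your proof is correct and follows essentially the same route as the paper's: the same choice of $t\in(r,2r)$ and $\eta$, the same use of Lemma~\ref{LEMTOPO1} to compare $\nu(B(z,2r))$ and $\nu(B(z,r))$ with $\mu(B(z,t))$ uniformly over $z\in K$, the same splitting into the neighbourhood $Y$ of $\supp(\mu)$ and its complement, and the same absorption of the $Y^c$-contribution via $\nu(Y^c)\le\beta$ with $\beta$ chosen against the $\mu$-dependent bound $c_\mu^{q-1}$. The constants differ cosmetically (you get $3\cdot2^{1-q}$ where the paper gets $2^{3-q}$), but the structure is identical.
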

\begin{proof}
 Let $a=\min_{x\in K}\mu(B(x,r))>0$. $a$ is positive since $\mu$ has finite support so that $\mu(B(x,r))$ can only take a finite number of values.
As before, let $t\in(r,2r)$ and let $\eta>0$ be such that
$$\left\{
\begin{array}{l}
 [t-\eta,t+\eta]\cap \big\{\|x-y\|;\ x,y\in\supp(\mu)\big\}=\varnothing;\\
\forall x,y\in\supp(\mu),\ x\neq y\implies \|x-y\|>2\eta.
\end{array}
\right.$$
Then, for any $x\in\supp(\mu)$ and any $z\in B(x,\eta)$, one gets
$$\mu\big(B(x,t)\big)=\mu\big(B(z,t)\big).$$
Now, provided $\delta>0$ is small enough,
Lemma \ref{LEMTOPO1} tells us that, for any $\nu\in\pk$ with $L(\mu,\nu)<\delta$,
$$\forall z\in K,\ \nu\big(B(z,2r)\big)\geq \frac12\mu\big(B(z,t)\big)\geq\frac14\nu\big(B(z,r)\big)$$
(recall that $\mu\big(B(z,t)\big)\geq a>0$). Let $F=K\backslash \bigcup_{x\in\supp(\mu)}B(x,\eta)$. Then
\begin{eqnarray*}
I_\nu(2r,q)&=&\sum_{x\in\supp(\mu)}\int_{B(x,\eta)}\frac{d\nu(z)}{\nu\big(B(z,2r)\big)^{1-q}}+\int_F\frac{d\nu(z)}{\nu\big(B(z,2r)\big)^{1-q}}\\
&\leq&2^{1-q}\sum_{x\in\supp(\mu)}\int_{B(x,\eta)}\frac {d\nu(z)}{\mu\big(B(z,t)\big)^{1-q}}+2^{1-q}\int_F\frac{d\nu(z)}{\mu\big(B(z,t)\big)^{1-q}}\\
&\leq&2^{1-q}\sum_{x\in\supp(\mu)}\frac{\nu\big(B(x,\eta)\big)}{\mu\big(B(x,t)\big)^{1-q}}+2^{1-q}a^{q-1}\nu(F).
\end{eqnarray*}
We apply Lemma \ref{LEMTOPO1} again to observe that, provided $\delta$ is small enough, for any $x\in\supp(\mu)$, 
$$\nu\big(B(x,\eta)\big)\leq 2\mu\big(B(x,3\eta/2)\big)=2\mu(\{x\})$$
and that 
$$\nu(F)\leq \mu\big(F(\eta/2)\big)+\veps=\veps,$$
where $\veps>0$ is arbitrary. Thus we conclude that
\begin{eqnarray*}
 I_\nu(2r,q)&\leq&2^{2-q}I_\mu(t,q)+2^{1-q}a^{q-1}\veps\\
&\leq&2^{3-q}I_\mu(r,q)
\end{eqnarray*}
if $\veps>0$ is small enough. For the other inequality, we simply write
\begin{eqnarray*}
I_\nu(r,q)&\geq&\sum_{x\in\supp\mu}\int_{B(x,\eta)}\frac{d\nu(z)}{\nu\big(B(z,r)\big)^{1-q}}\\
&\geq&\sum_{x\in\supp\mu}2^{q-1}\int_{B(x,\eta)}\frac {d\nu(z)}{\mu\big(B(x,t)\big)^{1-q}}\\
&\geq&\sum_{x\in\supp\mu}2^{q-1}\frac{\nu\big(B(x,\eta)\big)}{\mu\big(B(x,t)\big)^{1-q}}.
\end{eqnarray*}
Now, we can choose $\delta>0$ to ensure that $\nu\big(B(x,\eta)\big)\geq 2^{-1}\mu\big(B(x,\eta/2)\big)=2^{-1}\mu\big(\{x\}\big),$ so that
$$I_\nu(r,q)\geq 2^{q-2}I_{\mu}(t,q)\geq 2^{q-2}I_\mu(2r,q).$$
\end{proof}

\begin{remark}
 The assumption $\mu\big(B(x,r)\big)>0$ for any $x\in K$ is needed only to prove $I_\nu(2r,q)\leq C_q I_\mu(r,q)$.
\end{remark}

Of course, there is a similar statement for $q=1$. Nevertheless, we will need later a more precise result, because
of the logarithm. We restrict ourselves to one inequality.

\begin{corollary}\label{CORTOPOL1}
 Let $\veps>0$, $r>0$ and $\mu\in\fk$. There exists $\delta>0$ such that
for any $\nu\in\pk$ with $L(\mu,\nu)<\delta$, 
$$I_\nu(r,1)\leq \log 2+(1-\veps)I_\mu(2r,1).$$
\end{corollary}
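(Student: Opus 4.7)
The plan is to follow the structure of Corollaries \ref{CORTOPO4} and \ref{CORTOPO3}, adapting the estimates to the logarithm. Write the (finite) support of $\mu$ as $\{x_1,\dots,x_N\}$ with masses $p_i=\mu(\{x_i\})>0$. As in those corollaries I first select $t\in(r,2r)$ and $\eta>0$ small enough that the balls $B(x_i,\eta)$ are pairwise disjoint and that $\mu(B(z,t))=\mu(B(x_i,t))=:m_i$ for every $z\in B(x_i,\eta)$; let also $M_i=\mu(B(x_i,2r))\ge m_i$.

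Next I invoke Lemma \ref{LEMTOPO1} in the direction $\nu(E)\le\mu(E(\alpha))+\beta$ (obtained by symmetry of $L$) with $\alpha=t-r$, so that $B(z,r)(\alpha)\subset B(z,t)$, and I further require $\beta\le \min_i m_i$. For $\delta$ correspondingly small, every $\nu\in\pk$ with $L(\mu,\nu)<\delta$ then satisfies $\nu(B(z,r))\le m_i+\beta\le 2m_i$ for all $z\in B(x_i,\eta)$, whence
\[
\log\nu(B(z,r))\le \log 2+\log m_i.
\]
A second application of Lemma \ref{LEMTOPO1} lets me, by shrinking $\delta$ further, prescribe any tolerance $\tau>0$ and obtain $|q_i-p_i|\le\tau$ for every $i$, where $q_i:=\nu(B(x_i,\eta))$. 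Splitting $K$ into $Y=\bigsqcup_i B(x_i,\eta)$ and $F=K\setminus Y$, and using that $\log\nu(B(z,r))\le 0$ on $F$ together with $\sum_iq_i\le 1$, I get
\[
I_\nu(r,1)\le \log 2+\sum_{i=1}^N q_i\log m_i\le \log 2+\sum_{i=1}^N q_i\log M_i,
\]
the last inequality using $m_i\le M_i\le 1$.

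It remains to show $\sum_i q_i\log M_i\le (1-\varepsilon)I_\mu(2r,1)$. Writing
\[
\sum_{i=1}^N q_i\log M_i-(1-\varepsilon)I_\mu(2r,1)=\sum_{i=1}^N(q_i-p_i)\log M_i+\varepsilon I_\mu(2r,1),
\]
the second summand is nonpositive; the first is controlled by $\tau\sum_i|\log M_i|$, and since the $M_i$ depend only on $\mu$ and $r$ and are finitely many, I can choose $\tau$ (equivalently $\delta$) small enough to make it at most $-\varepsilon I_\mu(2r,1)$. The degenerate case $I_\mu(2r,1)=0$ forces $M_i=1$ for every $i$, so the bound $\sum_i q_i\log m_i\le 0$ holds directly. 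The main subtlety is this last comparison: the factor $(1-\varepsilon)$ in the statement is precisely the slack $\varepsilon|I_\mu(2r,1)|$ needed to absorb the first-order perturbation $q_i-p_i$, while the additive $\log 2$ is the price paid for replacing the additive error $\beta$ by a multiplicative factor on $\nu(B(z,r))$, which is the cheapest way to pass through the logarithm.
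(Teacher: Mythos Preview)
Your proof is correct and follows essentially the same route as the paper: choose $t\in(r,2r)$ and $\eta>0$ so that $\mu(B(z,t))$ is constant on each $B(x_i,\eta)$, use Lemma~\ref{LEMTOPO1} to get $\nu(B(z,r))\le 2\mu(B(x_i,t))\le 2\mu(B(x_i,2r))$ there, and discard the contribution of $F=K\setminus Y$ by nonpositivity of the logarithm.

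The only difference is in the last comparison. You bound $|q_i-p_i|\le\tau$ and absorb the perturbation $\sum_i(q_i-p_i)\log M_i$ into the slack $\varepsilon|I_\mu(2r,1)|$, which forces a separate treatment of the degenerate case $I_\mu(2r,1)=0$. The paper instead only requires the one-sided estimate $q_i=\nu(B(x_i,\eta))\ge(1-\varepsilon)p_i$; since each $\log M_i\le 0$, this immediately gives
\[
\sum_i q_i\log M_i\le(1-\varepsilon)\sum_i p_i\log M_i=(1-\varepsilon)I_\mu(2r,1),
\]
with no perturbation argument or case distinction needed. Your argument is fine, but the paper's shortcut is worth noting: when the weights multiply nonpositive quantities, a lower bound on the weights already yields the desired upper bound on the sum.
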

\begin{proof}
 Again, let $t\in(r,2r)$ and let $\eta>0$ be such that
$$\left\{
\begin{array}{l}
 [t-\eta,t+\eta]\cap \big\{\|x-y\|;\ x,y\in\supp(\mu)\big\}=\varnothing;\\
\forall x,y\in\supp(\mu),\ x\neq y\implies \|x-y\|>2\eta.
\end{array}
\right.$$
Then, for any $x\in\supp(\mu)$ and any $z\in B(x,\eta)$, one gets
$$\mu\big(B(x,t)\big)=\mu\big(B(z,t)\big).$$ 
Let $\delta>0$ be such that, for any $x\in\supp(\mu)$ and any $z\in B(x,\eta)$, for any
$\nu\in\pk$ with $L(\mu,\nu)<\delta$, 
$$\nu\big(B(z,r)\big)\leq 2\mu\big(B(z,t)\big)=2\mu\big(B(x,t)\big)\leq 2\mu\big(B(x,2r)\big),$$
$$\nu\big(B(x,\eta)\big)\geq(1-\veps)\mu\big(B(x,\eta/2)\big)=(1-\veps)\mu(\{x\}).$$
Then,
\begin{eqnarray*}
 I_\nu(r,1)&\leq&\sum_{x\in\supp(\mu)}\int_{B(x,\eta)}\log\big(\nu\big(B(z,r)\big)\big)d\nu(z)\\
&\leq&\sum_{x\in\supp(\mu)}\log(2)\nu\big(B(x,\eta)\big)+\sum_{x\in\supp(\mu)}\log \mu\big(B(x,2r)\big)\nu\big(B(x,\eta)\big)\\
&\leq&\log 2+(1-\veps)\sum_{x\in\supp(\mu)}\log \mu\big(B(x,2r)\big)\mu\big(\{x\}\big)\\
&\leq&\log 2+(1-\veps)I_\mu(2r,1).
\end{eqnarray*}
\end{proof}

Another application of Lemma \ref{LEMTOPO1} is the following result on open subsets of $\pk$:
\begin{lemma}\label{LEMTOPOOPEN}
\begin{enumerate}[(a)]
\item  Let $x\in K$, $a\in\mathbb R$ and $r>0$. Then $\{\mu\in\pk;\ \mu\big(B(x,r)\big)>a\}$ is open.
\item Let $E\subset K$ be such that there exists $\alpha>0$ with $E(\alpha)\cap K=E$. 
Then $\{\mu\in\mathcal P(K);\ \mu(E)>0\}$ is open. 
\end{enumerate}
\end{lemma}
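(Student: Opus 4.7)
The plan is to deduce both parts directly from Lemma \ref{LEMTOPO1}, by choosing the auxiliary set $E$, the radius $\alpha$, and the error $\beta$ to exploit the structural hypotheses.

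For part (a), fix $\mu \in \pk$ with $\mu\big(B(x,r)\big) > a$. Since $B(x,r) = \bigcup_n B(x, r-1/n)$ is an increasing union, continuity of $\mu$ from below yields some $r' < r$ with $\mu\big(B(x,r')\big) > a$. I set $E = B(x,r')$, pick $\alpha \in \bigl(0, \min(1, r-r')\bigr)$ so that $E(\alpha) \subseteq B(x, r'+\alpha) \subseteq B(x,r)$, and put $\beta = \tfrac12\bigl(\mu(E) - a\bigr) > 0$. Lemma \ref{LEMTOPO1} then supplies $\delta > 0$ such that $L(\mu,\nu) < \delta$ forces
$$a < \mu(E) - \beta \leq \nu\big(E(\alpha)\big) \leq \nu\big(B(x,r)\big),$$
which gives a Fortet-Mourier neighborhood of $\mu$ inside the set in question.

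For part (b), fix $\mu \in \pk$ with $\mu(E) > 0$ and set $\alpha' = \min(\alpha, 1/2) \in (0,1)$. Since $E(\alpha') \subseteq E(\alpha)$, the hypothesis $E(\alpha) \cap K = E$ propagates to $E(\alpha') \cap K = E$. Apply Lemma \ref{LEMTOPO1} with this $\alpha'$ and $\beta = \mu(E)/2$: there is $\delta > 0$ such that every $\nu \in \pk$ with $L(\mu,\nu) < \delta$ satisfies $\mu(E) \leq \nu\big(E(\alpha')\big) + \beta$. Because $\nu$ is carried by $K$,
$$\nu\big(E(\alpha')\big) = \nu\big(E(\alpha') \cap K\big) = \nu(E),$$
so $\nu(E) \geq \mu(E) - \beta = \mu(E)/2 > 0$, proving that $\{\nu : \nu(E) > 0\}$ is a neighborhood of $\mu$.

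There is really no serious obstacle here; both parts are immediate once Lemma \ref{LEMTOPO1} is in hand. The only point that requires a moment's thought is the role of the separation hypothesis in (b): since a general Borel set $E$ need not be open, $\{\nu : \nu(E) > 0\}$ is not open in full generality, and the condition $E(\alpha) \cap K = E$ is precisely what lets us identify the $\nu$-mass of the $\alpha$-fattening of $E$ with that of $E$ itself.
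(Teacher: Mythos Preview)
Your proof is correct and follows essentially the same approach as the paper: shrink the ball to a slightly smaller radius via continuity from below, then apply Lemma~\ref{LEMTOPO1} with the leftover room as $\alpha$ and half the surplus over $a$ as $\beta$; for (b), use the separation hypothesis to identify $\nu(E(\alpha'))$ with $\nu(E)$. The paper additionally remarks that the cases $a\notin[0,1)$ are trivial, but your argument handles them implicitly (the set is either empty or the inequality $\mu(E)>a$ is automatic), so nothing is missing.
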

\begin{proof}
\begin{enumerate}[(a)]
\item  If $a$ does not belong to $[0,1)$, then the set is either empty or equal to $\pk$. Otherwise, let $\mu\in\pk$ be such that $\mu\big(B(x,r)\big)>a$. One may find $\veps>0$ such that 
$\mu\big(B(x,(1-\veps)r)\big)>a$. Thus the result follows from Lemma \ref{LEMTOPO1} applied with $E=B(x,(1-\veps)r)$,
$\alpha=\veps r$ and $\beta=\big(\mu\big(B(x,(1-\veps)r)\big)-a\big)/2$.
\item The proof is similar (and even easier).
\end{enumerate}
\end{proof}

Finally, we will need that some subsets of $\pk$ are dense in $\pk$. We first recall a result which can be found e.g. in \cite[Lemma 2.2.4.]{OL05}.
\begin{lemma}\label{LEMDENS1}
 Let $(x_i)_{i\geq 1}$ be a dense sequence of $K$. Let $(\mu_{n,i})_{n\geq 1,\ 1\leq i\leq n}$ be a sequence of $\mathcal P(K)$ such that, for any $n\geq 1$ and any $1\leq i\leq n$, 
$\supp(\mu_{n,i})\subset K\cap B(x_i,1/n)$. Then, for any $m\geq 1$, $\bigcup_{n\geq m}\big\{\sum_{i=1}^n p_i\mu_{n,i};\ p_i\geq 0,\ \sum_i p_i=1\big\}$
is dense in $\pk$.
\end{lemma}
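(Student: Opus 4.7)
The plan is to approximate an arbitrary $\mu\in\pk$ by a convex combination of the $\mu_{n,i}$ for $n$ sufficiently large. Given $\delta>0$ and $m\geq 1$, I would choose $n\geq m$ with $2/n<\delta$. Since $(x_i)_{i\geq 1}$ is dense in $K$ and $K$ is compact, by enlarging $n$ if necessary I may assume that $K\subset\bigcup_{i=1}^n B(x_i,1/n)$.

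Next I would build a Borel partition $A_1,\dots,A_n$ of $K$ by the greedy procedure $A_1=K\cap B(x_1,1/n)$ and $A_i=(K\cap B(x_i,1/n))\setminus \bigcup_{j<i}A_j$ for $i\geq 2$. Then $A_i\subset K\cap B(x_i,1/n)$ and the $A_i$'s form a disjoint covering of $K$. Setting $p_i=\mu(A_i)$ yields $p_i\geq 0$ and $\sum_i p_i=1$; indices with $p_i=0$ contribute nothing, so the choice of $\mu_{n,i}$ there is irrelevant. Define $\nu=\sum_{i=1}^n p_i\mu_{n,i}$, which lies in the set on the left-hand side.

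To control $L(\mu,\nu)$, fix $f\in\textrm{Lip}(K)$ with $|f|\leq 1$ and $\textrm{Lip}(f)\leq 1$. Since both $A_i$ and $\supp(\mu_{n,i})$ are contained in $B(x_i,1/n)$, the Lipschitz bound gives $|f(y)-f(x_i)|\leq 1/n$ on these sets. Hence
$$\left|\int_{A_i}f\,d\mu-p_i f(x_i)\right|\leq\frac{p_i}{n}\quad\textrm{and}\quad\left|p_i f(x_i)-p_i\int f\,d\mu_{n,i}\right|\leq\frac{p_i}{n}.$$
Summing on $i$ and applying the triangle inequality yields $\bigl|\int f\,d\mu-\int f\,d\nu\bigr|\leq 2/n<\delta$, and taking the supremum over $f$ gives $L(\mu,\nu)\leq 2/n$. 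Since $\delta>0$ was arbitrary, this proves density.

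The argument presents no real obstacle: the only point requiring care is the construction of a Borel partition $(A_i)$ subordinate to the balls $B(x_i,1/n)$, after which the Lipschitz control on $f$ automatically converts the local diameter bound $1/n$ into a global estimate of the same order, independently of the specific measures $\mu_{n,i}$.
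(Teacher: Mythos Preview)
Your overall strategy is the natural one (and the paper itself does not give a proof, citing \cite[Lemma~2.2.4]{OL05} instead), but step~2 contains a genuine gap. You claim that ``by enlarging $n$ if necessary'' one can arrange $K\subset\bigcup_{i=1}^n B(x_i,1/n)$. This is not true for an arbitrary dense sequence: as $n$ grows you add more centres, but the radii shrink simultaneously, and the latter can win. For instance, take $K=[0,1]$, fix a dense sequence $(q_j)_{j\geq1}$ in $[0,1]$, and define $x_i=0$ for $1\le i\le M$ (with $M$ large) and, for $i>M$, set $x_i=q_j$ when $i-M=2^j$ and $x_i=0$ otherwise. Then $(x_i)$ is dense, yet $\{x_1,\dots,x_n\}$ contains at most $1+\lfloor\log_2(n-M)\rfloor$ distinct points for $n>M$, far too few to be $1/n$-dense in $[0,1]$; for $n\le M$ the set is $\{0\}$. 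One checks that no $n\ge1$ satisfies $K\subset\bigcup_{i=1}^n B(x_i,1/n)$.

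The fix is immediate: decouple the covering radius from $1/n$. Given $\delta>0$ and $m\ge1$, first pick $n_0\ge m$ with $2/n_0<\delta$. By compactness and density there exists $N$ with $K\subset\bigcup_{i=1}^N B(x_i,1/n_0)$; set $n=\max(N,n_0)$. Build your partition with $A_i\subset K\cap B(x_i,1/n_0)$ (radius $1/n_0$, not $1/n$). Since $\supp(\mu_{n,i})\subset B(x_i,1/n)\subset B(x_i,1/n_0)$, the same Lipschitz estimate gives $\bigl|\int f\,d\mu-\int f\,d\nu\bigr|\le 2/n_0<\delta$, and the proof goes through unchanged.
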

Since a compact set is separable, the previous lemma yields in particular that the set $\mathcal F(K)$ of probability measures on $K$ with finite support
is dense in $\pk$. Moreover, taking the $p_i$ in $\mathbb Q$, we can always consider a sequence $(\mu_n)$ of $\mathcal F(K)$ which is dense in $\pk$.

We shall also need several times the following result.
\begin{lemma}\label{LEMDENS2}
 Let $n\geq 1$ and let $K_1,\dots,K_n$ be nonempty subsets of $K$. Then $\big\{\mu\in\pk;\ \mu(K_j)>0\textrm{ for any }j=1,\dots,n\big\}$ is dense in $\pk$.
\end{lemma}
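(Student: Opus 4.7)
The plan is to produce a small atomic perturbation of $\mu$ that puts positive mass on each $K_j$. Given $\mu\in\pk$ and a target $\delta>0$, I pick any $x_j\in K_j$ (possible since each $K_j$ is nonempty), form the finitely supported measure $\sigma=\frac{1}{n}\sum_{j=1}^{n}\delta_{x_j}\in\pk$, and set
$$\nu=(1-\eta)\mu+\eta\sigma$$
for a small $\eta\in(0,1)$ to be chosen in terms of $\delta$.

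Then $\nu\in\pk$ as a convex combination. For any $f\in\textrm{Lip}(K)$, since $|f|\leq 1$ on $K$,
$$\left|\int f\,d\mu-\int f\,d\nu\right|=\eta\left|\int f\,d\mu-\int f\,d\sigma\right|\leq 2\eta,$$
so taking the supremum gives $L(\mu,\nu)\leq 2\eta$. Choosing $\eta=\delta/3$ yields $L(\mu,\nu)<\delta$. On the other hand, for each $j$ we have $x_j\in K_j$, hence $\nu(K_j)\geq \eta\sigma(K_j)\geq \eta/n>0$, as required.

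This shows the set in question meets every ball $B_L(\mu,\delta)$, so it is dense in $\pk$. There is no genuine obstacle here; the only mild point is ensuring $\nu(K_j)$ makes sense when $K_j$ is an arbitrary subset, but since we only need the lower bound $\nu(\{x_j\})\geq \eta/n$ and $\{x_j\}\subset K_j$, monotonicity (or outer measure if $K_j$ is not Borel) suffices. The argument is essentially a one-line perturbation and is much softer than the analogous distance-robustness results (Corollaries~\ref{CORTOPO4}, \ref{CORTOPO3}, \ref{CORTOPOL1}) proved earlier in this section.
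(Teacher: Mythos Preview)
Your proof is correct and follows essentially the same approach as the paper: perturb a given measure by a small convex combination with the atomic measure $\frac{1}{n}\sum_j\delta_{x_j}$, then use $|f|\leq 1$ to bound $L$ by $2\eta$ (the paper writes $2\veps$) and monotonicity to get $\nu(K_j)\geq\eta/n>0$. Your extra remark about non-Borel $K_j$ is a harmless bonus not present in the paper.
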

\begin{proof}
 Let $\nu\in\pk$, $\veps>0$ and for each $j=1,\dots,n$, let $x_j\in K_j$. We set
$$\mu=(1-\veps)\nu+\frac{\veps}n\sum_{j=1}^n \delta_{x_j}.$$
Then $L(\mu,\nu)\leq 2\veps$ and $\mu(K_j)>0$ for any $j=1,\dots,n$.
\end{proof}

\subsection{Graph theory}
Surprizingly enough, our constructions of measures with prescribed properties need some results from graph theory. Let us first recall some terminology, which can be found e.g. in \cite{DIESGRAPH}.
Let $G=(V,E)$ be a graph. We recall that two vertices $v,w$ of $E$ are \emph{adjacent} or \emph{neighbour} if $vw$ is an edge of $G$. A \emph{path} between two vertices $v$ and $w$ 
is a sequence of vertices $v_0=v,v_1,\dots,v_N=w$ such that $v_iv_{i+1}$ belongs to the set of edges $E$ for any $i\in\{0,\dots,N-1\}$. We say that the graph is \emph{connected}
provided any two vertices of $G$ can be linked by a path in $G$.

If $U$ is a subset of $V$, the induced subgraph $G(U)$ is the graph whose set of vertices is $U$ and whose set of edges is the subset of $E$ containing all edges $vw$ with $v,w\in U$.

\smallskip

Our first lemma is a well known result in graph theory. For convenience, we provide a proof.

\begin{lemma}\label{LEMGRAPH1}
 Let $G=(V,E)$ be a connected graph. There exists a vertex $v\in V$ such that $G(V\backslash \{v\})$ remains connected.
\end{lemma}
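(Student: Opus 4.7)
The plan is to reduce this to the classical fact that a finite tree with at least two vertices has a leaf, by passing to a spanning tree of $G$.

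First, dispose of the trivial case where $V$ has a single vertex: remove it, and the resulting empty graph is vacuously connected. So assume $|V|\geq 2$ (and implicitly finite, as is the case for every graph built from the dyadic covers $\mathcal C_n(K)$).

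Next, since $G$ is connected, it contains a spanning tree $T=(V,E_T)$ with $E_T\subset E$. The key ingredient is that every finite tree with at least two vertices has a leaf, i.e.\ a vertex of degree $1$. I would justify this by the longest-path argument: pick a path $v_0,v_1,\dots,v_N$ in $T$ of maximal length; then $v_0$ must have degree $1$, for otherwise a neighbour $w\neq v_1$ of $v_0$ would either extend the path (if $w\notin\{v_1,\dots,v_N\}$) or produce a cycle in $T$ (if $w\in\{v_2,\dots,v_N\}$), both impossible.

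Let $v$ be such a leaf of $T$. Removing a leaf from a tree yields a tree: indeed $T(V\setminus\{v\})$ is still acyclic (it is a subgraph of $T$) and remains connected because any path in $T$ between two vertices of $V\setminus\{v\}$ does not visit $v$ (otherwise $v$ would appear as an internal vertex and hence have degree $\geq 2$). Therefore $T(V\setminus\{v\})$ is a connected subgraph of $G(V\setminus\{v\})$ spanning all of $V\setminus\{v\}$, which forces $G(V\setminus\{v\})$ to be connected.

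The only genuine point is the existence of a leaf in a finite tree; everything else is a routine verification. In particular there is no real obstacle, but one should be careful that the statement implicitly assumes $G$ is finite, since for an infinite connected graph (e.g.\ the one-way infinite path) every vertex removal may leave a connected graph, but the leaf argument requires finiteness of the longest path.
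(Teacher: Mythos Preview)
Your proof is correct and takes a slightly different route than the paper. The paper works directly in $G$: it picks a longest path $[v_0,\dots,v_N]$ with pairwise distinct vertices and argues that $G(V\setminus\{v_0\})$ is connected, because any shortest path between two vertices of $V\setminus\{v_0\}$ that happens to pass through $v_0$ can be rerouted through $\{v_1,\dots,v_N\}$ (by maximality of the path, every neighbour of $v_0$ already lies in that set). You instead pass to a spanning tree $T$, run the longest-path argument inside $T$ to locate a leaf, and then use that removing a leaf from a tree preserves connectivity. Your detour through a spanning tree is arguably cleaner conceptually --- it reduces everything to the obvious fact that a leaf is never a cut-vertex --- while the paper's argument avoids invoking the existence of spanning trees altogether and stays inside $G$.

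One small inaccuracy in your closing remark: for the one-way infinite path, removing any internal vertex \emph{does} disconnect the graph (only the endpoint can be safely removed, so the lemma actually holds there). The genuine counterexample showing that finiteness is needed is the two-way infinite path, where every vertex is a cut-vertex.
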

\begin{proof}
 Let $[v_0,\dots,v_N]$ be a path such that its vertices are pairwise different, and whose length is maximal in the set of such paths. 
Then $G\backslash\{v_0\}$ is connected. Indeed, take any $v,w\in V\backslash\{v_0\}$ and let $[w_0,\dots,w_p]$ be the shortest path from $v=w_0$
to $w=w_p$ in $G$. In particular, $w_i=v_0$ for at most one $i\in\{1,\dots,p-1\}$.

If for any $i\in\{1,\dots,p-1\}$, $w_i$ is not equal to $v_0$, there is nothing to prove : the path stays already in $G(V\backslash\{v_0\})$.
Thus, suppose that $w_i=v_0$ for some $i$. If $w_{i-1}$ or $w_{i+1}$ did not belong to $\{v_1,\dots,v_N\}$, then we could add the edge $w_{i-1}v_0$
or $w_{i+1}v_0$ to the path $[v_0,\dots,v_N]$, with a new vertex. This would contradict the maximality of this path. Thus, $w_{i-1}$ and $w_{i+1}$
both belong to $\{v_1,\dots,v_N\}$, and we can replace in $[w_0,\dots,w_p]$ the subpath $[w_{i-1},w_i,w_{i+1}]$ by the corresponding
subpath joining $w_{i-1}$ to $w_{i+1}$ using only vertices in $\{v_1,\dots,v_N\}$, and thus avoiding $v_0$.
\end{proof}

This lemma allows us to give weights on the vertices of a connected graph with particular properties.
\begin{lemma}\label{LEMGRAPH2}
 Let $G=(V,E)$ be a connected graph, let $\veps>0$ and let $\rho\geq 1$. There exist $v_0\in V$ and positive real numbers $(\omega_v)_{v\in V}$
such that
\begin{itemize}
 \item for any $v\in V\backslash\{v_0\}$, one can find a neighbour $u$ of $v$ such that $\omega_v\leq\veps \omega_u^\rho$;
\item $\omega_{v_0}>1/2$;
\item $\sum_{v\in V}\omega_v=1$.
\end{itemize}
\end{lemma}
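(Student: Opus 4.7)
The plan is to build a rooted-tree structure on $V$ by peeling off vertices via iterated application of Lemma \ref{LEMGRAPH1}, then assign weights recursively from the root outward and finally renormalize. As in Lemma \ref{LEMGRAPH1}, I will implicitly assume $V$ is finite (the proof of that lemma rests on a path of maximal length); the case $|V|=1$ is trivial.

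First I would apply Lemma \ref{LEMGRAPH1} iteratively to obtain an enumeration $v_0,v_1,\dots,v_n$ of $V$ such that, for every $k\in\{1,\dots,n\}$, the induced subgraph $G(\{v_0,\dots,v_k\})$ is still connected. Since that subgraph has at least two vertices, $v_k$ must possess at least one neighbour in $\{v_0,\dots,v_{k-1}\}$; I fix one such neighbour and call it $p(v_k)$, the parent of $v_k$.

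Next I would introduce a parameter $\varepsilon'>0$ and define preliminary weights recursively by $\omega'_{v_0}=1$ and $\omega'_{v_k}=\varepsilon'\bigl(\omega'_{p(v_k)}\bigr)^{\rho}$ for $k\geq 1$. An easy induction using $\rho\geq 1$ gives $0<\omega'_v\leq 1$ for every $v$ and $\omega'_{v_k}\leq \varepsilon'$ for $k\geq 1$, hence $S:=\sum_{v\in V}\omega'_v\leq 1+n\varepsilon'$. I would then pick $\varepsilon'$ small enough that both $\varepsilon'\cdot 2^{\rho-1}\leq\varepsilon$ and $n\varepsilon'<1$ hold, forcing $S<2$. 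Setting $\omega_v=\omega'_v/S$ gives $\sum_v\omega_v=1$ and $\omega_{v_0}=1/S>1/2$, while the recursive inequality rescales to
$$\omega_{v_k}\;=\;\frac{\varepsilon'\bigl(\omega'_{p(v_k)}\bigr)^{\rho}}{S}\;=\;\varepsilon' S^{\rho-1}\omega_{p(v_k)}^{\rho}\;\leq\;\varepsilon\,\omega_{p(v_k)}^{\rho},$$
with $p(v_k)$ a neighbour of $v_k$, as required.

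The only subtle point—the step I would single out as the main ``obstacle''—is the compatibility between the recursive inequality $\omega'_v\leq\varepsilon'(\omega'_{p(v)})^{\rho}$ and the renormalization: dividing the weights by $S$ multiplies the effective constant by $S^{\rho-1}$, which blows the inequality up when $\rho>1$ unless $S$ stays close to $1$. Forcing $S<2$ from the outset (rather than merely $S<\infty$) is what keeps this under control, and it is affordable precisely because we have total freedom in how small to take $\varepsilon'$.
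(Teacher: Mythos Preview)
Your proof is correct. Both your argument and the paper's rest on the same key idea: use Lemma~\ref{LEMGRAPH1} repeatedly to peel vertices off one by one, thereby equipping $V$ with a rooted-tree structure in which every non-root vertex has a designated neighbour closer to the root, and then make the weights decay sharply along this tree.

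The difference is purely in execution. The paper argues by induction on $|V|$: at each step it removes one vertex via Lemma~\ref{LEMGRAPH1}, applies the induction hypothesis to the remaining graph with a slightly smaller $\veps_0<\veps$, then reinserts the removed vertex with a tiny weight $\alpha$ and rescales by $(1-\alpha)$. You instead unwind this induction: you build the entire ordering $v_0,\dots,v_n$ and parent map first, assign all the preliminary weights $\omega'_v$ explicitly in one pass with a single parameter $\veps'$, and perform a single global normalization at the end. Your observation that the normalization multiplies the effective constant by $S^{\rho-1}$, and that this is harmless once $S<2$, is exactly the bookkeeping that the paper hides inside the inductive step (its factor $(1-\alpha)^{1-\rho}$). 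Your version is a little more transparent and avoids the nested choice of $\veps_0$'s; the paper's version is shorter to state. Neither buys anything the other does not.
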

\begin{proof}
 We proceed by induction on the number of vertices of $G$, the case where this number equals 1 or 2 being trivial. So, suppose that
$V$ contains $n\geq 3$ elements and let $v_1$ be given by Lemma \ref{LEMGRAPH1}. The graph $G(V\backslash\{v_1\})$ remaining connected,
we can apply the induction hypothesis with some $\veps_0\in(0,\veps)$ to the graph $G(V\backslash\{v_1\})$. We get positive numbers $(\theta_v)_{v\in V\backslash\{v_1\}}$
and $v_0\in V\backslash\{v_1\}$. Let $\alpha>0$ be very small and let
$$\left\{
\begin{array}{rcll}
\omega_v&=&(1-\alpha)\theta_v&\textrm{ if }v\neq v_1\\
\omega_{v_1}&=&\alpha.
\end{array}
\right.$$
Provided $\alpha>0$ is small enough, $\omega_{v_1}$ is much smaller that $\omega_v$, for $v$ any neighbour of $v_1$. Moreover, when
$v\in V\backslash\{v_0,v_1\}$, one can find another vertex $u$ which is a neighbour of $v$ in $G(V\backslash\{v_1\})$, in particular in $G$,
such that
$$\theta_v\leq\veps_0\theta_u^\rho.$$
This leads to 
$$\omega_v\leq\frac{\veps_0}{(1-\alpha)^{\rho-1}}\omega_u^\rho\leq \veps\omega_u^\rho,$$
provided $\alpha>0$ is small enough again. The property $\omega_{v_0}>1/2$ is also true, under the same restriction on $\alpha$.
\end{proof}

\subsection{Measures with prescribed properties}
Our results will depend on the construction of probability measures on $K$ having prescribed properties depending on the dimension of $K$.
Of course, the choice of the definition of the dimension will determine the properties of the measure on $K$ that we can expect.
In this direction, the most famous result is the Frostman lemma (see \cite{Fal03}) which is based on the Hausdorff dimension.
\begin{lemma}
 Let $K$ be a nonempty compact subset of $\mathbb R^d$. Then, for every $0\leq t<\dim_{\mathcal H}(K)$, there exists $\mu\in\pk$ and constants
$C>0$, $r_0>0$ such that $\mu\big(B(x,r)\big)\leq Cr^t$ for all $x\in K$ and $0<r\leq r_0$.
\end{lemma}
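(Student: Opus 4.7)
The plan is to use the classical dyadic mass-distribution argument due to Frostman, exploiting the fact that $t<\dimh(K)$ forces the $t$-Hausdorff measure of $K$ to be infinite and in particular the $t$-Hausdorff content $\mathcal H^t_\infty(K)$ to be bounded below by some $c>0$. I will build a probability measure adapted to the dyadic scale by a top-down cap-and-rescale procedure, and then extract from its total mass the desired constant $C$.

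First, I would enclose $K$ in a closed cube $Q_0$ of side length $1$ and consider, for each integer $n\geq 0$, the family of half-open dyadic subcubes of side $2^{-n}$ that intersect $K$ (as in the definition of $\mathcal C_n(K)$ earlier in the paper). For a large level $N$, initialise a pre-measure $\nu_N$ by placing uniform mass $2^{-Nt}$ on each dyadic cube of generation $N$ meeting $K$. Then I would proceed by descending induction on $n$: going from generation $N-1$ down to generation $0$, for each dyadic cube $Q$ of generation $n$, if the current mass $\nu_N(Q)$ exceeds $2^{-nt}$, I rescale $\nu_N$ restricted to $Q$ by the factor $2^{-nt}/\nu_N(Q)$. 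At the end of this procedure, the resulting measure satisfies $\nu_N(Q)\leq 2^{-nt}$ for every dyadic cube $Q$ of every generation $n\leq N$; a standard covering argument (each ball of radius $r\in(0,1)$ is covered by a bounded number of dyadic cubes of side comparable to $r$) then yields $\nu_N(B(x,r))\leq C_d\, r^t$ for all $x\in K$ and $0<r\leq r_0$, with $C_d$ depending only on the ambient dimension $d$.

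The main obstacle, as usual in this construction, is to show that $\nu_N(Q_0)$ is bounded below by a positive constant independent of $N$. Here one considers, for each generation-$0$ cube that does not get rescaled (there is only $Q_0$), the collection $\mathcal G$ of maximal dyadic cubes $Q$ for which rescaling did occur; these maximal cubes form a disjoint cover of $\supp\nu_N$ (hence of a neighbourhood of $K$) by dyadic cubes on which $\nu_N(Q)=2^{-n(Q)t}$, where $2^{-n(Q)}$ is the side of $Q$. Summing gives $\nu_N(Q_0)=\sum_{Q\in\mathcal G}(\mathrm{diam}\,Q)^t/(\sqrt d)^t$, and since $\{Q:Q\in\mathcal G\}$ covers $K$, this sum is at least $(\sqrt d)^{-t}\mathcal H^t_\infty(K)\geq c(\sqrt d)^{-t}>0$. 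Hence $\mu=\nu_N/\nu_N(Q_0)$ (restricted and pushed onto $K$ by a weak-$*$ limit as $N\to\infty$, using compactness of $\pk$) is a probability measure on $K$ satisfying $\mu(B(x,r))\leq C\,r^t$ with $C=C_d(\sqrt d)^t/c$, as required.
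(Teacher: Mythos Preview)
The paper does not prove this lemma; it quotes it as the classical Frostman lemma and simply refers the reader to \cite{Fal03}. Your argument is exactly the standard dyadic mass-distribution construction given there (and in Mattila's book), so in substance you are reproducing the cited proof, and the approach is correct.

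Two small points are worth tightening. First, your family $\mathcal G$ of ``maximal dyadic cubes for which rescaling occurred'' need not cover $K$: along a branch of the dyadic tree where no ancestor was ever capped, the generation-$N$ cube still carries mass $2^{-Nt}$ but was never rescaled. The usual fix is to include those generation-$N$ cubes as well (equivalently, to regard the initial assignment at level $N$ as saturating the cap), after which $\mathcal G$ is a disjoint cover of $K$ by cubes with $\nu_N(Q)=\big(\mathrm{side}(Q)\big)^t$ and the Hausdorff-content lower bound follows. Second, the probability measures $\nu_N/\nu_N(Q_0)$ are not supported on $K$ but on its $2^{-N}\sqrt d$-neighbourhood, so the weak-$*$ compactness argument should be run in $\mathcal P(\overline{Q_0})$; one then observes that any limit point is supported on $K$, and that the bound $\mu\big(B(x,r)\big)\leq Cr^t$ survives for open balls by the Portmanteau theorem. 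With these adjustments the proof is complete.
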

This lemma has a counterpart for the upper-box dimension, due to Tricot (\cite{Tri82}).
\begin{lemma}\label{LEMANTIFROSTMAN}
Let $K$ be a nonempty compact subset of $\mathbb R^d$. Then, for every $t>\dboxsup(K)$, there exists $\mu\in\pk$
and constants $C>0,r_0>0$ such that $\mu\big(B(x,r)\big)\geq C r^t$ for all $x\in K$ and $0<r\leq r_0$.
\end{lemma}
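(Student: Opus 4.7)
My plan is to build the measure $\mu$ as a weighted countable convex combination of uniform discrete measures on a sequence of maximal separated subsets of $K$, chosen at geometrically decaying scales. The weights must decay just fast enough to be summable, but slowly enough that the bound at every scale survives.

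First I fix $s$ with $\dboxsup(K) < s < t$. By the definition of the upper box dimension, there exists $r^* > 0$ such that $\mathbf N_r(K) \leq r^{-s}$ for all $0 < r \leq r^*$. Set $r_n = 2^{-n}$ and choose $n_0$ with $r_{n_0} \leq r^*$. For each $n \geq n_0$, let $F_n \subset K$ be a \emph{maximal $r_n$-separated} subset: the open balls $B(x,r_n/2)$, $x \in F_n$, are pairwise disjoint, while maximality forces every point of $K$ to lie within distance $r_n$ of some point of $F_n$. Since a maximal $r_n/2$-packing bounds the $(r_n/2)$-covering number up to a universal constant, one obtains
$$|F_n| \leq C_d \, \mathbf N_{r_n/2}(K) \leq C_d \, (r_n/2)^{-s} \leq C'_d \, 2^{ns}.$$
Let $\mu_n = |F_n|^{-1}\sum_{x\in F_n}\delta_x$. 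By $r_n$-density of $F_n$, for every $y \in K$, $B(y,r_n)\cap F_n$ is non-empty, hence $\mu_n\big(B(y,r)\big) \geq 1/|F_n| \geq C''_d \, 2^{-ns}$ for every $r \geq r_n$.

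Next I define $\mu := \sum_{n\geq n_0} w_n \mu_n$ with $w_n := c \cdot 2^{n(s-t)}$ and $c$ chosen so that $\sum w_n = 1$; this is legitimate because $s < t$ makes the geometric series converge. Clearly $\mu \in \pk$. Take any $r \in (0, r_{n_0}]$ and $y \in K$; pick $n \geq n_0$ with $r \in (r_{n+1}, r_n]$. Using only the $(n{+}1)$-st term,
$$\mu\big(B(y,r)\big) \;\geq\; w_{n+1}\,\mu_{n+1}\big(B(y,r)\big) \;\geq\; c \cdot 2^{(n+1)(s-t)} \cdot C''_d \cdot 2^{-(n+1)s} \;=\; c\,C''_d \cdot 2^{-(n+1)t}.$$
Since $r \leq r_n = 2 r_{n+1}$ gives $r^t \leq 2^t \cdot 2^{-(n+1)t}$, this yields $\mu\big(B(y,r)\big) \geq (c\,C''_d \cdot 2^{-t}) \cdot r^t$, which is the desired bound with $C = c\,C''_d\,2^{-t}$ and $r_0 = r_{n_0}$.

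The main potential obstacle is the tension between the two constraints on the weights: they must sum to one (so $w_n$ cannot decay too slowly), yet $w_{n+1}$ must be at least of order $r^t |F_{n+1}| \sim 2^{n(s-t)}$ to guarantee the lower bound on $\mu\big(B(y,r)\big)$ at scale $r \sim r_{n+1}$. The fact that $t > s$ is exactly what allows these two requirements to coexist; choosing $w_n$ to be a geometric sequence with ratio $2^{s-t}$ exhausts the available slack. A secondary minor issue is the comparison between packing and covering numbers used to estimate $|F_n|$, but this only affects absolute constants.
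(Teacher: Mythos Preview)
Your argument is correct. The paper does not actually give a proof of this lemma: it merely attributes the result to Tricot \cite{Tri82} and states it without further justification, so there is no ``paper's own proof'' to compare against.

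As a brief remark on your construction: the classical route (following Tricot) often builds $\mu$ as a mass distribution pushed down through a nested sequence of efficient covers of $K$, rather than as a convex combination of discrete measures over all dyadic scales. Your approach is slightly different in spirit but entirely sound, and arguably more transparent: the single key inequality $\mu\big(B(y,r)\big) \geq w_{n+1}/|F_{n+1}|$ at the relevant scale does all the work, and the choice $w_n \propto 2^{n(s-t)}$ is exactly the ``critical'' weight that balances summability against the required lower bound. One cosmetic point: the constant $C_d$ in the bound $|F_n|\leq C_d\,\mathbf N_{r_n/2}(K)$ is actually unnecessary, since any $r_n$-separated set has cardinality at most $\mathbf N_{r_n/2}(K)$ directly (each covering ball of radius $r_n/2$ contains at most one point of $F_n$); but this only affects the value of $C$, not the validity of the proof.
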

One can also ask to weaken the assumption in the anti-Frostman lemma, by the use of the lower-box dimension instead of the upper-box dimension.
This is possible, even if there is a price to pay: the conclusion is also weaker, since it is only obtained for one value of $r$, arbitrarily small.
\begin{lemma}\label{LEMANTIFROSTMANLOWERBOX}
Let $K$ be a nonempty compact subset of $\mathbb R^d$. Then, for every $t>\dboxinf(K)$, for every $\alpha>0$, there exists $\mu\in\pk$
and $r\in(0,\alpha)$ such that $\mu\big(B(x,r)\big)\geq 2^{-t} r^t$ for all $x\in K$.
\end{lemma}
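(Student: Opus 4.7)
The plan is to invoke the definition of the lower box dimension to produce, for any given $\alpha$, an efficient cover of $K$ by balls of some small radius $r'$, and then use the centers of this cover as the atoms of an evenly weighted Dirac mass $\mu$. The enlargement by a factor of $2$ needed to move the centers into $K$ is exactly what gives the $2^{-t}$ in the bound.

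More precisely, fix $\alpha>0$ and choose any $r'\in(0,\min(\alpha/2,1))$ small enough that
\[
\frac{\log \mathbf N_{r'}(K)}{-\log r'}<t;
\]
such an $r'$ exists because $t>\dboxinf(K)=\liminf_{r\to 0}\log\mathbf N_r(K)/(-\log r)$. Rearranging and using $r'<1$, this reads $\mathbf N_{r'}(K)<(r')^{-t}$. Write $N=\mathbf N_{r'}(K)$.

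Next, pick an optimal covering $B(c_1,r'),\dots,B(c_N,r')$ of $K$ by balls of radius $r'$. Without loss of generality each ball meets $K$, so choose $x_i\in B(c_i,r')\cap K$. By the triangle inequality $B(c_i,r')\subset B(x_i,2r')$, hence the enlarged balls $B(x_1,r),\dots,B(x_N,r)$ with $r:=2r'\in(0,\alpha)$ still cover $K$, and now the centers lie in $K$. Define
\[
\mu=\frac1N\sum_{i=1}^N\delta_{x_i}\in\pk.
\]

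For any $x\in K$ we can choose an index $i$ with $x\in B(x_i,r)$, which, since the balls are centred at points of $K$, is equivalent to $x_i\in B(x,r)$. Therefore $\mu(B(x,r))\geq 1/N>(r')^{t}=(r/2)^t=2^{-t}r^t$, which is the desired inequality. There is really no substantial obstacle in this argument; the only thing to watch is the factor of $2$ needed to move the centres from arbitrary points of $\mathbb R^d$ into $K$, which is precisely what produces the $2^{-t}$ constant in the conclusion.
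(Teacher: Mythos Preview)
Your proof is correct and follows essentially the same approach as the paper's: build a uniform atomic measure on a finite set of points in $K$ so that every ball of radius $r$ catches at least one atom, and use the defining liminf of the lower box dimension to bound the number of atoms. The only cosmetic difference is that the paper works with the packing number $\mathbf P_{r/2}(K)$ (whose centers already lie in $K$, with the factor $2$ coming from the maximality argument), whereas you work with the covering number $\mathbf N_{r'}(K)$ and recover the factor $2$ by shifting the centers into $K$.
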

\begin{proof}
 One can find $r\in(0,\alpha)$ such that $\mathbf P_{r/2}(K)\leq 2^tr^{-t}.$ We set $P=\mathbf P_{r/2}(K)$ and let $x_1,\dots,x_P\in K$ be centers of disjoint balls
of radius $r/2$. We define
$$\mu=\frac 1P\sum_{j=1}^P \delta_{x_j}.$$
Then, for any $x\in K$, one can find $i\in\{1,\dots,P\}$ such that $x_i\in B(x,r)$ (otherwise $B(x,r/2)$ would not intersect
any $B(x_j,r/2)$). Thus
$$\mu\big(B(x,r)\big)\geq \frac1P\geq 2^{-t}r^t.$$
\end{proof}

We can, like in Frostman lemma, obtain measures with an upper estimate of the size of balls using the upper-box dimension only. However, here too, we have
to work with a fixed radius.
\begin{lemma}\label{LEMFROSTMANUPPERBOX}
Let $K$ be a nonempty compact subset of $\mathbb R^d$ and let $0\leq t<\dboxsup(K)$. Then, for any $\alpha>0$, one can find $r\in(0,\alpha)$
and $\mu\in\pk$ such that, for any $x\in K$, $\mu\big(B(x,r)\big)\leq r^t$.
\end{lemma}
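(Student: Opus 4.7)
The plan is to take $\mu$ to be the uniform probability measure on a maximal $r$-packing of $K$, for a carefully chosen small radius $r$. The idea is straightforward: if the $P=\mathbf P_r(K)$ packing centers $x_1,\dots,x_P$ are pairwise at distance at least $2r$, then any open ball of radius $r$ can contain at most one of them, which immediately controls $\mu\bigl(B(x,r)\bigr)$ by $1/P$. This closely mirrors the construction in Lemma~\ref{LEMANTIFROSTMANLOWERBOX}, but the direction of the inequality we want forces us to fix a single small radius $r$ rather than conclude a uniform bound for all $r\leq r_0$.

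Concretely, I would fix $t'$ with $t<t'<\dboxsup(K)$. Using the equivalent expression of the upper box dimension in terms of packing numbers,
$$\dboxsup(K)=\limsup_{r\to 0}\frac{\log\mathbf P_r(K)}{-\log r}>t',$$
there exist arbitrarily small $r>0$ with $\mathbf P_r(K)\geq r^{-t'}$. I would pick such an $r$ in $\bigl(0,\min(\alpha,1)\bigr)$, set $P=\mathbf P_r(K)$, and choose $x_1,\dots,x_P\in K$ realizing this packing, so that the open balls $B(x_i,r)$ are pairwise disjoint; equivalently $\|x_i-x_j\|\geq 2r$ for $i\neq j$. Then I define
$$\mu=\frac{1}{P}\sum_{i=1}^P\delta_{x_i}.$$

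For any $x\in K$, the open ball $B(x,r)$ contains at most one $x_i$: if $x_i,x_j\in B(x,r)$ with $i\neq j$, then $\|x_i-x_j\|<2r$, contradicting the separation. Hence
$$\mu\bigl(B(x,r)\bigr)\leq\frac{1}{P}\leq r^{t'}=r^{t'-t}\cdot r^t\leq r^t,$$
since $r\leq 1$ and $t'>t$ imply $r^{t'-t}\leq 1$.

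There is essentially no serious obstacle; the only delicate point is the choice of the intermediate exponent $t'$ strictly between $t$ and $\dboxsup(K)$, which provides the slack $r^{t'-t}\leq 1$ needed to obtain a clean bound free of dimensional constants. Had we worked directly with $t$, we would only get $\mu\bigl(B(x,r)\bigr)\leq C r^t$ for some constant $C$, and the passage from $C r^t$ to $r^t$ is precisely what the gap $t'-t$ buys us for all sufficiently small admissible $r$.
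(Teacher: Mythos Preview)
Your proof is correct and follows essentially the same approach as the paper: take $\mu$ to be the uniform measure on the centers of a maximal $r$-packing of $K$ for a suitably small $r$, and observe that any ball $B(x,r)$ contains at most one packing center. The only difference is that you introduce an intermediate exponent $t'$, but this is unnecessary: since $t<\dboxsup(K)$, there are arbitrarily small $r$ with $\frac{\log\mathbf P_r(K)}{-\log r}>t$, i.e.\ $\mathbf P_r(K)>r^{-t}$, so one already gets $\mu\bigl(B(x,r)\bigr)\leq 1/P\leq r^t$ directly with no constant $C$ to absorb.
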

\begin{proof}
 The proof is similar to that of Lemma \ref{LEMANTIFROSTMANLOWERBOX}. By definition of the upper-box dimension, one can find $r\in(0,\alpha)$ such that 
$$r^{-t}\leq \mathbf P_{r}(K).$$
 Let $P=\mathbf P_{r}(K)$ and let 
$x_1,\dots,x_P\in K$ be the centers of disjoint balls with radius $r$. Then the measure
$$\mu=\frac 1P\sum_{j=1}^P \delta_{x_j}$$
is convenient. Indeed, for any $x\in K$, the ball $B(x,r)$ contains at most one of the $x_j$, so that $\mu\big(B(x,r)\big)\leq P^{-1}\leq r^t$.
\end{proof}

\medskip

We finally use the box separation index. This is much more delicate, and we work directly with $I_{\mu}(r,q)$ instead of $\mu\big(B(x,r)\big)$.
\begin{lemma}\label{LEMIMUBSI}
 Let $K$ be a nonempty compact subset of $\mathbb R^d$, let $n\geq 1$ and let $q\leq 0$. There exists $\nu_n\in\pk$ with finite support
such that, for any $\mu=\sum_{i=1}^N p_i\delta_{x_i}\in\pk$ with finite support, for any $\theta\in(0,1)$, setting
$\nu=(1-\theta)\mu+\theta\nu_n$, then 
$$I_\nu(3.2^{-n},q)\leq N(1-\theta)^qa^q+\theta^q+\frac{\theta^q}{2^q}C_n(K)^{1-q},$$
where $a=\min_i(p_i)$. Moreover, $\nu\big(B(x,3.2^{-n})\big)>0$ for any $x\in K$.
\end{lemma}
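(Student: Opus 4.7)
The plan is to build $\nu_n$ component-by-component, using Lemma \ref{LEMGRAPH2} to concentrate most of the mass in each component on a single atom. Write $r = 3\cdot 2^{-n}$, let $K_n^{(1)},\dots,K_n^{(C)}$ (with $C = C_n(K)$) be the connected components of $K_n$, and let $\mathcal{C}_n^{(j)}(K)$ be the cubes of $\mathcal{C}_n(K)$ making up $K_n^{(j)}$. For each $j$, consider the graph $G_j$ whose vertices are the cubes of $\mathcal{C}_n^{(j)}(K)$, with an edge between two cubes whenever they share a face (so their union is connected and any two of their points lie within distance $r$). Since $K_n^{(j)}$ is connected, so is $G_j$. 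Fix an exponent $\rho > 1-q$ and a parameter $\varepsilon > 0$ to be tuned later, and apply Lemma \ref{LEMGRAPH2} to each $G_j$: this yields positive weights $(\omega_v^{(j)})$ summing to $1$ and a distinguished vertex $v_0^{(j)}$ with $\omega_{v_0^{(j)}}^{(j)} > 1/2$, such that every other vertex $v$ has a neighbour $u$ with $\omega_v^{(j)} \leq \varepsilon (\omega_u^{(j)})^\rho$. Picking a point $y_v^{(j)} \in v \cap K$ for each vertex, set
$$\nu_n = \frac{1}{C}\sum_{j=1}^C \sum_{v \in G_j} \omega_v^{(j)}\,\delta_{y_v^{(j)}}.$$
This is a finite-supported probability measure having a point in every cube of $\mathcal{C}_n(K)$, hence $\nu(B(x,r)) \geq \theta \nu_n(B(x,r)) > 0$ for all $x\in K$.

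To estimate $I_\nu(r,q)$ split $\nu = (1-\theta)\mu + \theta \nu_n$. Since $q\leq 0$ and $\nu(B(x_i,r))\geq (1-\theta)p_i$, the $\mu$-part yields
$$(1-\theta)\int \nu(B(z,r))^{q-1}\,d\mu(z) \leq (1-\theta)^q\sum_i p_i^q \leq N(1-\theta)^q a^q,$$
the first term of the target. For the $\nu_n$-part, treat central and non-central vertices separately. At $v_0^{(j)}$, the bound $\nu(B(y_{v_0^{(j)}}^{(j)},r)) \geq \theta \omega_{v_0^{(j)}}^{(j)}/C$ together with $(\omega_{v_0^{(j)}}^{(j)})^q \leq 2^{-q}$ (from $\omega_{v_0^{(j)}}^{(j)}>1/2$ and $q\leq 0$) produces the contribution
$$\theta \sum_{j} \frac{\omega_{v_0^{(j)}}^{(j)}}{C}\,\nu(B(y_{v_0^{(j)}}^{(j)},r))^{q-1} \leq \theta^q C^{-q}\sum_j(\omega_{v_0^{(j)}}^{(j)})^q \leq \frac{\theta^q}{2^q}\,C^{1-q},$$
which is the third term of the target. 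At any non-central $v$, the graph-lemma condition gives $\omega_u^{(j)} \geq (\omega_v^{(j)}/\varepsilon)^{1/\rho}$ for some neighbour $u$, and $y_u^{(j)} \in B(y_v^{(j)},r)$ gives $\nu(B(y_v^{(j)},r)) \geq \theta(\omega_v^{(j)}/\varepsilon)^{1/\rho}/C$, so the non-central contribution is at most
$$\theta^q\, \varepsilon^{(1-q)/\rho}\, C^{-q} \sum_j \sum_{v\neq v_0^{(j)}}(\omega_v^{(j)})^{1+(q-1)/\rho}.$$
Because $\rho > 1-q$, the exponent $\sigma := 1+(q-1)/\rho$ is positive, so each $(\omega_v^{(j)})^\sigma \leq 1$ and the sum is bounded by the total number of cubes in $\mathcal{C}_n(K)$, a finite quantity independent of $\varepsilon$. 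Choosing $\varepsilon$ small enough forces this contribution to be at most $\theta^q$, which delivers the remaining term of the target.

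The heart of the matter is the interplay between $\rho$ and $\varepsilon$. For $q<0$ the weights produced by Lemma \ref{LEMGRAPH2} can decay geometrically along chains leading away from $v_0^{(j)}$, and the dangerous factor $(\omega_v)^q$ would then blow up; a naive $\rho = 1$ application leaves an unbounded summand and proves nothing. Selecting $\rho > 1-q$ is exactly what converts the exponent $q$ into the harmless positive exponent $\sigma$, confining all sensitivity to the small weights inside the prefactor $\varepsilon^{(1-q)/\rho}$, which can be driven to zero independently of the $\omega_v^{(j)}$'s. A minor technical point is the choice of adjacency on cubes, which has to simultaneously make $G_j$ connected and keep $\|y_v^{(j)} - y_u^{(j)}\| \leq r$ for graph-neighbours $u,v$ (a constraint that governs how the constant $3$ in the radius is calibrated).
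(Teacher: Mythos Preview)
Your proof is correct and follows the same strategy as the paper's: build $\nu_n$ by applying Lemma~\ref{LEMGRAPH2} to the dyadic cubes in each connected component of $K_n$, then estimate $I_\nu$ by exploiting the weight comparison between graph neighbours. Two minor differences worth noting: the paper takes $\rho = 1-q$ exactly, which makes your exponent $\sigma$ vanish and the $\omega$-factors cancel outright (leaving a bare $\varepsilon$ rather than $\varepsilon^{(1-q)/\rho}$); and it splits the integral cube by cube according to whether the cube meets $\supp(\mu)$, rather than splitting $d\nu$ into its $\mu$- and $\nu_n$-parts as you do. Your decomposition is arguably cleaner and both routes produce the same three terms.

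One genuine slip: with face-sharing adjacency your claim ``since $K_n^{(j)}$ is connected, so is $G_j$'' fails in dimension $d\geq 2$. For instance the half-closed cubes $[0,1)^2$ and $[1,2)^2$ form a connected subset of $\mathbb R^2$ (the corner $(1,1)$ lies in the second cube and in the closure of the first), yet they share no face, so your graph on $\{[0,1)^2,[1,2)^2\}$ has no edges. You already flag the adjacency as needing calibration; the fix is precisely what the paper does: declare two cubes adjacent whenever their \emph{closures} intersect, and work in the $\ell^\infty$ norm so that any point of one cube lies within $2\cdot 2^{-n}<3\cdot 2^{-n}$ of any point of an adjacent cube.
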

\begin{proof}
 We use the notations of the introduction. We decompose $K_n$ into its connected components $\mathcal C_{n,1}(K),\dots,\mathcal C_{n,C_n(K)}(K)$. 
For any $j=1,\dots,C_n(K)$, we denote by $C_j^i$, $i=1,\dots,\kappa_j,$ the half-closed dyadic cubes of size $2^{-n}$ appearing in $\mathcal C_j$. 
By definition, $C_j^i\cap K$ is never empty, and we can fix $y_j^i$ a point in $C_j^i\cap K$.

We endow each $\mathcal C_{n,j}$ with a graph structure. Precisely, we say that two cubes $C_j^i$ and $C_j^l$ are adjacent provided
their closure do intersect. The graph $\mathcal C_{n,j}$ is then a connected graph. Moreover, it is easy to check that if $C_j^i$ and $C_j^l$
are adjacent, then for any $x\in C_j^i$, $C_j^l$ is contained in $B(x,3.2^{-n})$ (at this part of the proof, we find convenient to use
the $\ell^\infty$-norm on $\mathbb R^d$, and to suppose that $B(x,r)$ is the ball corresponding to this norm. Working with a different norm
would only replace here 3 by a constant depending on $d$. This would not change at all the content of our main theorem.)

Let $\veps>0$ be such that 
$\veps(\kappa_1+\dots+\kappa_{C_n(K)})\leq C_n(K)^q.$
We apply Lemma \ref{LEMGRAPH2} to each connected graph $\mathcal C_{n,j}$ with $\rho=1-q$. This gives weights $\omega_j^i$, $i=1,\dots,\kappa_j$. Renumbering the cubes if necessary,
we can always assume that, for any $i$ in $1,\dots,\kappa_j-1$, there exists $l\neq i$ such that the cubes $C_j^i$ and $C_j^l$
are adjacent, and $\omega_j^i\leq \veps(\omega_j^l)^{1-q}$ (that is we suppose that the vertex $v_0$ in Lemma \ref{LEMGRAPH2} is $C_j^{\kappa_j}$.)

We finally set
\begin{eqnarray*}
 \lambda_j&=&\sum_{i=1}^{\kappa_j}\omega_j^i \delta_{y_j^i}\\
\nu_n&=&\sum_{j=1}^{C_n(K)}\frac1{C_n(K)}\lambda_j.
\end{eqnarray*}
Let now $\mu$ and $\nu$ as in the assumptions of the lemma. We first observe that for any $x\in K$, $\nu\big(B(x,3.2^{-n})\big)>0$. Indeed,
any $x\in K$ belongs to some $C_j^i$ and $\nu\big(B(x,3.2^{-n})\big)\geq \nu(\{y_j^i\})>0$. Let us now control $I_{\nu}(3.2^{-n},q)$:
$$I_{\nu}(3.2^{-n},q)=\sum_{j=1}^{C_n(K)}\sum_{i=1}^{\kappa_j}\int_{C_j^i\cap K}\frac{d\nu(x)}{\nu\big(B(x,3.2^{-n})\big)}.$$
To estimate the integral, we distinguish three different cases:
\begin{enumerate}[(a)]
 \item There exists (at least) one point in the support of $\mu$ which lies in $C_j^i\cap K$.
Then we just use that, for any $x\in C_j^i\cap K$, $\nu\big(B(x,3.2^{-n})\big)\geq \nu(C_{j}^i\cap K)\geq(1-\theta) a$, so that
$$\int_{C_j^i\cap K}\frac{d\nu(x)}{\nu\big(B(x,3.2^{-n})\big)^{1-q}}\leq \nu(C_j^i\cap K)^q\leq (1-\theta)^qa^q.$$
Observe that this estimate concerns at most $N$ cubes.
\item $\supp(\mu)\cap C_{j}^i$ is empty, and $i\leq \kappa_j-1$. Let $C_j^l$ be a neighbour of $C_j^i$ such that $\omega_j^i\leq \veps(\omega_j^l)^{1-q}$.
For any $x\in C_j^i\cap K$, we already observed that $B(x,3.2^{-n})$ contains $C_j^l\cap K$, so that $\nu\big(B(x,3.2^{-n})\big)\geq \nu(C_j^l\cap K)\geq 
\frac{\theta}{C_n(K)}\omega_j^l$.
Thus,
$$\int_{C_j^i\cap K}\frac{d\nu(x)}{\nu\big(B(x,3.2^{-n})\big)^{1-q}}\leq \frac{\nu(C_j^i\cap K)}{\theta^{1-q}(\omega_j^l)^{1-q}C_n(K)^{q-1}}\leq \frac{\theta^q \omega_j^i}{(\omega_j^l)^{1-q}C_n(K)^{q}}\leq \frac{\theta^q\veps}{C_n(K)^q}.$$
\item For the last cubes, namely $\supp(\mu)\cap C_j^i$ is empty and $i=\kappa_j$, we argue as in case (a),
except that now the mass of the cube $C_j^i\cap K$ is given by $\nu_n$:
\begin{eqnarray*}
 \int_{C_j^i\cap K}\frac{d\nu(x)}{\nu\big(B(x,3.2^{-n})\big)^{1-q}}&\leq&\nu(C_j^i\cap K)^q\\
&\leq&\frac{\theta^q}{C_n(K)^q}(\omega_j^{\kappa_j})^q\\
&\leq&\left(\frac{\theta}2\right)^qC_n(K)^{-q}.
\end{eqnarray*}
\end{enumerate}
Putting this together, and remembering the value of $\veps$, we get that 
\begin{eqnarray}
 \nonumber I_\nu(3.2^{-n},q)&\leq&N(1-\theta)^q a^q+\sum_{j=1}^{C_n(K)}\sum_{i=1}^{\kappa_j-1}\frac{\theta^q\veps}{C_n(K)^q}+\sum_{j=1}^{C_n(K)}\left(\frac{\theta}2\right)^qC_n(K)^{-q}\\
\label{EQLEMIMUBSI} &\leq&N(1-\theta)^qa^q+\theta^q+\frac{\theta^q}{2^q}C_n(K)^{1-q}.
\end{eqnarray}
\end{proof}

\subsection{Results true for all measures}
The results true for all measures which are quoted in Theorem \ref{THMMAINLQSPECTRUM} are not new. All of them can be found in \cite{OL05} or in \cite{OL08}, except
$\dsupmuq\leq s_\mathcal P$ for $q\geq 1$. But this inequality can be found in \cite{Cut95} for $q=1$, and the remaining cases follow easily from the next lemma, which is an easy application 
of Jensen inequality.

\begin{lemma}\label{LEMJENSEN}
Let $q_1\leq q_2$ be two real numbers and let $\mu\in\mathcal P(K)$. Then 
$$\dinfmu(q_1)\geq \dinfmu(q_2)\textrm{ and }\dsupmu(q_1)\geq\dsupmu(q_2).$$
\end{lemma}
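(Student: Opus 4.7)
The plan is to rewrite the ratio defining $\dinfmu(q)$ and $\dsupmu(q)$ uniformly in $q$ as the logarithm of a power mean of the single function $f_r(x):=\mu\big(B(x,r)\big)$, divided by $\log r$, and then invoke the classical monotonicity of power means. On the probability space $(K,\mu)$ and for a nonnegative measurable $g$, set
$$M_p(g)=\left(\int g^p\,d\mu\right)^{1/p}\quad(p\neq 0),\qquad M_0(g)=\exp\!\left(\int \log g\,d\mu\right).$$
Since $I_\mu(r,q)=\int f_r^{q-1}\,d\mu$, a direct computation yields $\frac{1}{q-1}\log I_\mu(r,q)=\log M_{q-1}(f_r)$ for $q\neq 1$, while for $q=1$, $I_\mu(r,1)=\int\log f_r\,d\mu=\log M_0(f_r)$. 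In every case, therefore,
$$\dinfmu(q)=\liminf_{r\to 0}\frac{\log M_{q-1}(f_r)}{\log r},\qquad \dsupmu(q)=\limsup_{r\to 0}\frac{\log M_{q-1}(f_r)}{\log r}.$$

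The key tool is the classical power-mean inequality: on any probability space, $p\mapsto M_p(g)$ is non-decreasing on $\mathbb R$. I would obtain it via Jensen's inequality applied to $t\mapsto t^{p_2/p_1}$, distinguishing the convex case $0<p_1<p_2$ (where Jensen gives the inequality directly), the concave case $p_1<p_2<0$ (where the reversed Jensen combines with the fact that $1/p_2<0$ also reverses the inequality when taking $p_2$-th roots), and treating the mixed case $p_1<0<p_2$ by passing through the limiting quantity $M_0$, using continuity of $p\mapsto\log M_p(g)$ at $p=0$.

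Combining the two ingredients, set $p_i=q_i-1$, so that $p_1\leq p_2$. Since $0\leq f_r\leq 1$ we have $M_{p_i}(f_r)\leq 1$ and hence $\log M_{p_i}(f_r)\leq 0$. For $r$ small enough, $\log r<0$, so dividing the chain $\log M_{p_1}(f_r)\leq\log M_{p_2}(f_r)\leq 0$ by the negative quantity $\log r$ reverses all inequalities:
$$\frac{\log M_{p_1}(f_r)}{\log r}\geq\frac{\log M_{p_2}(f_r)}{\log r}\geq 0.$$
Taking the liminf (respectively the limsup) of both sides as $r\to 0$ yields $\dinfmu(q_1)\geq\dinfmu(q_2)$ and $\dsupmu(q_1)\geq\dsupmu(q_2)$.

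I do not expect any real obstacle here; the argument is essentially a bookkeeping exercise once the power-mean reformulation is made. The only minor points to handle are the sign casework in the proof of the power-mean inequality, and the convention when $I_\mu(r,q)=+\infty$ (which can occur for $q<1$): in that regime one has $M_{q-1}(f_r)=0$ and $\log M_{q-1}(f_r)=-\infty$, hence $\dinfmu(q)=\dsupmu(q)=+\infty$, and the desired inequalities hold vacuously.
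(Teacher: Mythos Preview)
Your argument is correct, and in fact the paper does not supply a proof of this lemma at all: it merely states that the result ``is an easy application of Jensen inequality'' and moves on. Your power-mean reformulation is precisely the natural way to make that hint rigorous, since the monotonicity of $p\mapsto M_p(f_r)$ is itself the standard consequence of Jensen's inequality; so your approach and the paper's intended one coincide.
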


\section{Typical lower $L^q$-dimensions, $q\leq 0$}
Let $K$ be an infinite compact subset of $\mathbb R^d$. In this section, we prove that a typical measure $\mu\in\pk$ satisfies $\dinfmuq=s_{\rm sep}$ for any $q\leq 0$.
We first observe that we can work with a fixed value of $q$. Indeed, suppose that we are able to prove that, for any $q\leq 0$, the set
$$\mathcal M(q)=\big\{\mu\in\pk;\ \dinfmuq=s_{\rm sep}\big\}$$
contains a dense $G_\delta$-set. Then consider $(q_n)$ a dense sequence in $(-\infty,0]$,with $q_0=0$ and let
$$\mathcal M=\bigcap_n\mathcal M(q_n)$$
which itself contains a dense $G_\delta$-set. Let $\mu\in\mathcal M$ and let $q\in(-\infty,0]$. One can find two subsequences $(q_{\phi(n)})$ and $(q_{\psi(n)})$
such that $q_{\phi(n)}\leq q\leq q_{\psi(n)}$. Since $\dinfmu(q_{\psi(n)})\leq \dinfmuq\leq \dinfmu(q_{\phi(n)})$, taking the limit, we get $\dinfmuq=s_{\rm sep}$.

This trick works in any case. In particular, in this section, from now on, we fix $q\leq 0$.
\subsection{The lower estimate}\label{SECLEQLEQ0} We first prove that a typical measure $\mu\in\pk$ satisfies $\dinfmuq\geq s_{\rm sep}$. 
Let $n\geq 1$, and recall that $\mathcal C_{n,1},\dots,\mathcal C_{n,C_n(K)}$ denote the connected components of $K_n$, the union
of the dyadic cubes of size $2^{-n}$ intersecting $K$. We also define
$$\mathcal A_{n,j}=\bigcup_{C\in\mathcal C_{n,j}}C\cap K.$$
It is important to notice that $\textrm{dist}(\mathcal A_{n,j},\mathcal A_{n,k})\geq 2^{-n}$ provided $j\neq k$, and that $K$ is the (disjoint) union
of the $\mathcal A_{n,j}$. In particular, $\mathcal A_{n,j}(2^{-n-1})\cap K=\mathcal A_{n,j}$. We finally set
\begin{eqnarray*}
 \mathcal R_n&=&\big\{\mu\in\pk;\ \forall j\in\{1,\dots,C_n(K)\},\ \mu(\mathcal A_{n,j})>0\big\}\\
\mathcal R&=&\bigcap_{n\geq 1}\mathcal R_n,
\end{eqnarray*}
and we claim that $\mathcal R$ is the dense $G_\delta$-set we are looking for. Indeed, each $\mathcal R_n$ is dense by Lemma \ref{LEMDENS2} and open by Lemma \ref{LEMTOPOOPEN}. Moreover, any $\mu$ in $\mathcal R$ has its lower $L^q$-dimension greater that or equal to $s_{\rm sep}$. Indeed, let $r>0$ be small, and let $n\geq 1$
be such that $2^{-(n+1)}< r\leq 2^{-n}$. We know that
\begin{eqnarray*}
 I_{\mu}(r,q)&=&\sum_{j=1}^{C_n(K)}\int_{\mathcal A_{n,j}}\frac{d\mu(x)}{\mu\big(B(x,r)\big)^{1-q}}\\
&\geq&\sum_{j=1}^{C_n(K)}\int_{\mathcal A_{n,j}}\frac{d\mu(x)}{\mu(\mathcal A_{n,j})^{1-q}},
\end{eqnarray*}
because the $\mathcal A_{n,j}$ are well separated, so that $\mathcal A_{n,j}\supset B(x,r)\cap K$ for any $x\in \mathcal A_{n,j}$ and any $r\leq 2^{-n}$. Thus we find
$$I_\mu(r,q)\geq \sum_{j=1}^{C_n(K)}\mu(\mathcal A_{n,j})^{q}.$$
Now, since $\sum_{j=1}^{C_n(K)}\mu(\mathcal A_{n,j})=1$, it is well known that this quantity is minimal when $\mu(\mathcal A_{n,j})=\frac1{C_n(K)}$
for any $j=1,\dots,C_n(K)$. This shows that
$$I_\mu(r,q)\geq C_n(K)^{1-q}.$$
Taking the logarithm, dividing by $(q-1)\log r$, which is positive, and taking the liminf, we get the aforementioned lower bound for $\dinfmuq$.

\subsection{The upper estimate}\label{SECUPPERBSI}
We now prove that a typical measure $\mu\in\pk$ satisfies $\dinfmuq\leq s_{\rm sep}$. It is sufficient to prove that, for any $t>s_{\rm sep}$, a typical measure
$\mu\in\pk$ satisfies $\dinfmuq\leq t$.

Let $\mu\in\mathcal F(K)$, $\mu=\sum_{i=1}^N p_i\delta_{x_i}$, $a=\min_i(p_i)>0$ and let $l\geq 1$. By definition of the box separation index, one can find $n\geq l$ very large such that 
$$N(1-N^{-1})^qa^q+N^{-q}+2^{-q}N^{-q}C_n(K)^{1-q} \leq 2^{nt(1-q)}.$$

Let $\nu_n$ be the measure given by Lemma \ref{LEMIMUBSI} and let 
$$\nu_{\mu,l}=\left(1-\frac1N\right)\mu+\frac 1N\nu_n.$$
We can apply Corollary \ref{CORTOPO3} to the measure $\nu_{\mu,l}$ and to $r=3.2^{-n}$ to get some positive $\delta_{\mu,l}$ such that, for any $\nu$ with $L(\nu,\nu_{\mu,l})<\delta_{\mu,l}$, 
\begin{eqnarray*}
 I_\nu(6.2^{-n},q)&\leq&C_qI_{\nu_{\mu,l}}(3.2^{-n},q)\\
&\leq&C_q\left(N(1-N^{-1})^qa^q+N^{-q}+2^{-q}N^{-q}C_n(K)^{1-q}\right)\\
&\leq&C_{q}2^{nt(1-q)}.
\end{eqnarray*}
We finally set
$$\mathcal R=\bigcap_{l\geq 1}\bigcup_{\mu\in\mathcal F(K)}B_{L}(\nu_{\mu,l},\delta_{\mu,l}).$$
For any fixed $l\geq 1$, $\{\nu_{\mu,l};\ \mu\in\mathcal F(K)\}$ is dense in $\pk$. Indeed, let $\lambda\in\pk$ and let $\veps>0$. Since $K$ is infinite, there
exists $\mu\in\fk$ such that $L(\mu,\lambda)<\veps/2$ and the cardinal number of the support of $\mu$, denoted by $N$, satisfies $N>4/\veps$. Now, 
$$L(\nu_{\mu,l},\lambda)<\frac\veps2+\frac 2N<\veps.$$
 Thus, $\mathcal R$ is a dense $G_\delta$-set. Moreover,
for any $\nu\in\mathcal R$, there exist arbitrarily large integers $n$ such that 
$$I_\nu(6.2^{-n},q)\leq C_{q}2^{nt(1-q)}.$$
Taking the logarithm and dividing by $(q-1)\log(6.2^{-n})$, this shows that $\dinfnuq\leq t$.

\subsection{The box separation index of self-similar compact sets}
We end up this section by computing the box separation index of a self-similar compact set. Fix an integer $N\geq 2$ and let $S_i:\mathbb R^d\to\mathbb R^d$,
$i=1,\dots,N$ be contracting similarities with respective ratio $r_i$. Let $K$ be the self-similar compact set associated to these similarities, namely
$K$ is the unique nonempty compact subset of $\mathbb R^d$ satisfying
$$K=\bigcup_i S_i(K).$$
We say that $(S_1,\dots,S_N)$ satisfies the \emph{open set condition} if there exists an open nonempty and bounded subset $\mathcal U$ of $\mathbb R^d$
with $S_i\mathcal U\subset\mathcal U$ and $S_i\mathcal U\cap S_j\mathcal U=\varnothing$ for all $i,j$ with $i\neq j$. 
We also say that $(S_1,\dots,S_N)$ satisfies the \emph{strong separation condition} if $S_i K\cap S_j K=\varnothing$ for all $i\neq j$.

\medskip

When $(S_1,\dots,S_N)$ satisfies the open set condition, the Hausdorff dimension and the box dimension of $K$ are well known. Define $\beta$ as the unique solution
of $\sum_{i=1}^N r_i^\beta=1$. Then $\dim_{\mathcal H}(K)=\dboxinf(K)=\dboxsup(K)=\beta$ (all details can be found in \cite{Fal03}). Under a slightly stronger
assumption, this is also the value of the box separation index.

\begin{theorem}
 Let $(S_1,\dots,S_N)$ be contracting similarities of $\mathbb R^d$ with ratios $(r_1,\dots,r_N)$ and let $K$ be the associated self-similar
compact set. If $(S_1,\dots,S_N)$ satisfies the strong separation condition, then 
$$\bsi(K)=\dim_{\mathcal H}(K)=\dboxinf(K)=\dboxsup(K)=\beta$$
where $\beta$ is the unique solution of $\sum_{i=1}^N r_i^\beta=1$.
\end{theorem}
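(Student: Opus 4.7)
The upper bound $\bsi(K)\leq\dboxinf(K)=\beta$ is immediate: the inequality $\bsi\leq\dboxinf$ is recorded just after the definition of $\bsi$, and $\dboxinf(K)=\beta$ is the classical computation under the open set condition, which the strong separation condition implies. So the real content is the lower bound $\bsi(K)\geq\beta$.

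Set $\eta=\min_{i\neq j}\textrm{dist}(S_iK,S_jK)>0$, positive by the strong separation condition and compactness. For $r\in(0,1)$, introduce the \emph{stopping} multi-indices
$$\mathcal I(r)=\big\{\mathbf i=(i_1,\dots,i_k):\ r_{\mathbf i}\leq r<r_{i_1}\cdots r_{i_{k-1}}\big\},$$
with the convention that the empty product equals $1$. Iterating $\sum_i r_i^\beta=1$ along this finite tree of prefixes yields $\sum_{\mathbf i\in\mathcal I(r)}r_{\mathbf i}^\beta=1$, and combined with $r_{\mathbf i}\leq r$ this forces $|\mathcal I(r)|\geq r^{-\beta}$. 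The key claim is that distinct $\mathbf i,\mathbf j\in\mathcal I(r)$ yield pieces at distance $>r\eta$. Indeed, neither multi-index can be a prefix of the other (else the longer one would violate its own stopping condition), so if $m$ is the first position where they differ, the common prefix $(i_1,\dots,i_{m-1})$ is a proper prefix of $\mathbf i$, hence $r_{i_1\cdots i_{m-1}}>r$, and
$$\textrm{dist}(S_{\mathbf i}K,S_{\mathbf j}K)\geq r_{i_1\cdots i_{m-1}}\textrm{dist}(S_{i_m}K,S_{j_m}K)\geq r_{i_1\cdots i_{m-1}}\eta>r\eta.$$

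To translate this into a lower bound on $C_n(K)$, I would observe: if $K=A\sqcup B$ with $\textrm{dist}(A,B)>2\sqrt{d}\cdot 2^{-n}$, then any cube in $\mathcal C_n(K)$ meeting $A$ and any one meeting $B$ lie in distinct connected components of $K_n$, because a chain of adjacent cubes all meeting $K$ produces a sequence of $K$-points with consecutive distances at most $2\sqrt{d}\cdot 2^{-n}$, which cannot jump from $A$ to $B$. Applied repeatedly to the partition of $K$ induced by $\mathcal I(r_n)$ with $r_n=3\sqrt{d}\cdot 2^{-n}/\eta$, this gives
$$C_n(K)\geq |\mathcal I(r_n)|\geq r_n^{-\beta}=(3\sqrt{d}/\eta)^{-\beta}\cdot 2^{n\beta},$$
and dividing $\log C_n(K)$ by $n\log 2$ and taking the liminf yields $\bsi(K)\geq\beta$.

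The main obstacle is finding the right family of multi-indices. A naive approach using all multi-indices of a fixed length $k$ only produces pairwise separation $\rho_{\min}^{k-1}\eta$, and, after balancing this against $2^{-n}$, only gives $\bsi(K)\geq \log N/\log(\rho_{\min}^{-1})$, which is strictly smaller than $\beta$ whenever the $r_i$ are not all equal (since $\sum_i r_i^\beta=1$ forces $N\rho_{\min}^\beta\leq 1$, i.e.\ $N\leq\rho_{\min}^{-\beta}$, with equality only in the equal-ratio case). Using stopping multi-indices of variable length is precisely what makes the separation scale linearly in $r$ and thereby matches the correct exponent $\beta$.
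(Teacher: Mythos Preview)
Your proof is correct and follows essentially the same route as the paper's: both use the stopping words $\mathcal I(r)$ (the paper calls them $I(r)$), the count $|\mathcal I(r)|\geq r^{-\beta}$ coming from $\sum_{\mathbf i\in\mathcal I(r)}r_{\mathbf i}^\beta=1$, and the separation estimate $\textrm{dist}(S_{\mathbf i}K,S_{\mathbf j}K)\geq r\cdot(\text{const})$ obtained by comparing at the first place where $\mathbf i$ and $\mathbf j$ differ. The only differences are cosmetic: the paper writes the separation constant as $2^{-m}$ rather than $\eta$, and instead of scaling $r_n$ by $3\sqrt d/\eta$ it shifts the dyadic level, concluding $C_{n+m+2}(K)\geq 2^{n\beta}$; your explicit $\sqrt d$ factor and your open-sets argument for why well-separated pieces lie in distinct components of $K_n$ are a bit more careful than the paper's one-line justification, but the content is the same.
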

\begin{proof}
 Since $\bsi(K)\leq\dboxinf(K)$, we just need to prove that $\liminf_n \frac{\log C_n(K)}{n\log 2}\geq\beta$. For a word
$\mathbf i=i_1\dots i_p$ with entries in $\{1,\dots,N\}$, define
$$S_{\mathbf i}=S_{i_1}\circ\dots\circ S_{i_p},\ r_{\mathbf i}=r_{i_1}\dots r_{i_p},\ \rho_{\mathbf i}=r_{\mathbf i}^\beta.$$
Since the strong separation condition is satisfied, there exists $m\geq 0$ such that 
$$\textrm{dist}\big(S_i(K),S_j(K)\big)\geq 2^{-m}\textrm{ for any }i\neq j.$$
For $r>0$, denote by $I(r)$ the set of words $\mathbf i$ such that
$$r_{i_1}\dots r_{i_p}\leq r\textrm{ and }r_{i_1}\dots r_{i_{p-1}}>r.$$
Such a word is called \emph{minimal} with respect to $r$. The key points are the following facts which can be found e.g. in 
the proof of \cite[Theorem 9.3]{Fal03}:
\begin{itemize}
 \item For any $r>0$, $\sum_{\mathbf i\in I(r)}\rho_{\mathbf i}=1$;
\item For any $r>0$, $K\subset\bigcup_{i\in I(r)}S_i(K).$
\end{itemize}
Now, by definition of $I(r)$, $\rho_{\mathbf i}\leq r^\beta$ for any $\mathbf i\in I(r)$, so that 
$$\card\big(I(r)\big)\geq r^{-\beta}.$$
Pick now $\mathbf i\neq \mathbf j$ in $I(r)$. By minimality of $\mathbf i$ and $\mathbf j$, $\mathbf i$ is not the beginning
 of $\mathbf j$, and conversely. Let $k$ be the smallest index with $i_k\neq j_k$ and let $ \mathbf u$ be the word
$\mathbf u=i_1\dots i_{k-1}$. Then 
\begin{eqnarray*}
 \textrm{dist}\big(S_{\mathbf i}(K),S_{\mathbf j}(K)\big)&\geq& \textrm{dist}\big(S_{\mathbf u}S_{i_k}(K),S_{\mathbf u}S_{j_k}(K)\big)\\
&\geq&r_{i_1}\dots r_{i_{k-1}}\textrm{dist}\big(S_{i_k}(K),S_{j_k}(K)\big)\\
&\geq&2^{-m}r.
\end{eqnarray*}
We specialize this inequality for $r=2^{-n}$. In each $S_{\mathbf i}(K)$, $\mathbf i\in I(2^{-n})$, one can find a point belonging to $K$. 
Furthermore, the distance between $S_{\mathbf i}(K)$ and $S_{\mathbf j}(K)$ is greater than $2^{-(n+m)}$ when $\mathbf i\neq\mathbf j\in I(2^{-n})$.
Thus, the closure of the dyadic cubes of size $2^{-(n+m+2)}$ intersecting $S_\mathbf i(K)$ and the closure
of the dyadic cubes of size $2^{-(n+m+2)}$ intersecting $S_\mathbf j(K)$, $\mathbf i\neq\mathbf j\in I(2^{-n})$, are disjoint. Hence,
$$C_{n+m+2}(K)\geq\card\big(I(2^{-n})\big)\geq 2^{-n\beta}.$$
Taking the logarithm and the liminf, this yields $\bsi(K)\geq\beta$.
\end{proof}
\begin{remark}
 The example of $K=[0,1]$ shows that we cannot only assume the open set condition.
\end{remark}

\section{Local uniform upper box dimension and typical upper $L^q$-dimension}
In this section, we study the link between the local uniform upper box dimension of the compact set $K$ and the typical upper $L^q$-dimension, $q> 1$,
of the probability measures on $K$. We also show that the local uniform upper box dimension does not coincide with the local upper box or the local lower box dimensions.

From now on, we fix $q> 1$ and set $s_u=\dboxsuplocunif(K)$.

\subsection{The lower estimate}\label{SECUPPERUNIFORMBOXANDLQDIM}
Let $t<s_u$. One intends to show that $\dsupmuq\geq t$ for a typical measure $\mu\in\pk$. Let $(x_i)_{i\geq 1}$ be a dense sequence in $K$ and let $n\geq 1$. By definition
of $s_u$, one may find $r_n<1/n$ such that, for any $i=1,\dots,n$, 
$$\mathbf P_{r_n}\big(K\cap B(x_i,1/n)\big)\geq r_n^{-t}.$$
Arguing like in the proof of Lemma \ref{LEMFROSTMANUPPERBOX}, we can construct for each $n\geq 1$ and each $i\leq n$ a measure $\mu_{n,i}\in\pk$ such that
$\supp(\mu_{n,i})\subset B(x_i,1/n)$ and, for any $x\in K$, $\mu_{n,i}\big(B(x,r_n)\big)\leq r_n^t$. 

Let also $\delta_n>0$ be given by Lemma \ref{LEMTOPO1} for $\alpha=r_n/2$ and $\beta=r_n^t$. We finally set
\begin{eqnarray*}
 \Lambda_n&=&\left\{\sum_{i=1}^n p_i\mu_{n,i};\ p_i\geq 0,\ \sum_{i=1}^n p_i=1\right\}\\
\mathcal R_n&=&\bigcup_{\mu\in\Lambda_n}B_L(\mu,\delta_n)\\
\mathcal R&=&\bigcap_{m\geq 1}\bigcup_{n\geq m}\mathcal R_n.
\end{eqnarray*}
By Lemma \ref{LEMDENS1}, for any $m\geq 1$, $\bigcup_{n\geq m}\mathcal R_n$ is dense, so that $\mathcal R$ is a dense $G_\delta$-subset of $\pk$. Pick now $\nu\in\mathcal R$. 
There exists an arbitrarily large integer $n$ and $\mu\in\Lambda_n$ such that $L(\mu,\nu)<\delta_n$. Now, $\mu$ satisfies, for any $x\in K$, 
$$\mu\big(B(x,r_n)\big)=\sum_{i=1}^n p_i\mu_{n,i}\big(B(x,r_n)\big)\leq r_n^t.$$
Taking into account the value of $\delta_n$, this yields
$$\nu\big(B(x,r_n/2)\big)\leq 2r_n^t.$$
Integrating this inequality, we get
$$I_\nu(r_n/2,q)\leq 2^{q-1}r_n^{t(q-1)}.$$
Since $r_n$ can be taken arbitrarily small, this implies $\dsupnuq\geq t$.

\subsection{The upper estimate}
Let $t>s_u$. Our aim is to show that, generically, $\dsupmuq\leq t$. By definition of the local uniform upper box dimension, 
one can find $x_1,\dots,x_n\in K$, all different, $r_0>0$, with $4r_0<\min_{i\neq j}\|x_i-x_j\|$ and $\alpha>0$,
such that, for any $r\in(0,\alpha)$, one may find $i\in\{1,\dots,n\}$ with 
$$\frac{\log \mathbf N_r\big(K\cap B(x_i,r_0)\big)}{-\log r}\leq t.$$
The crucial point here is a result which appears in the proof of \cite[Lemma 2.3.1.]{OL05}. It is based on Jensen's inequality.
\begin{lemma}\label{LEMJENSENLQ}
 Let $\mu\in\pk$, let $E$ be a Borel subset of $K$ with $\mu(E)>0$. Then, for any $r>0$,
$$I_\mu(2r,q)\geq\frac{\mu(E)^q}{\mathbf N_r(E)^{q-1}}.$$
\end{lemma}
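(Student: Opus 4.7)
The plan is to estimate $I_\mu(2r,q)$ from below by restricting the integral to $E$, grouping the mass of $E$ according to an optimal $r$-cover, and then exploiting the convexity of $t\mapsto t^q$ (which holds because $q>1$).

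First, since the integrand is nonnegative, I would write
$$I_\mu(2r,q)\geq \int_E \mu\bigl(B(x,2r)\bigr)^{q-1}\,d\mu(x).$$
Next, set $N=\mathbf N_r(E)$ and pick balls $B_1,\dots,B_N$ of radius $r$ covering $E$. Turning this cover into a partition by setting $E_j=(B_j\cap E)\setminus\bigcup_{i<j}B_i$, I obtain $E=\bigsqcup_j E_j$. The key geometric observation is that if $x\in E_j$, then $x\in B_j$, hence $B_j\subset B(x,2r)$, and in particular $\mu(B(x,2r))\geq \mu(E_j)$. Inserting this bound into the integral yields
$$I_\mu(2r,q)\geq \sum_{j=1}^N \mu(E_j)^{q-1}\mu(E_j)=\sum_{j=1}^N \mu(E_j)^q.$$

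Finally, I would apply Jensen's inequality to the convex function $\phi(t)=t^q$ on $[0,\infty)$ (convex since $q>1$), which gives
$$\frac{1}{N}\sum_{j=1}^N \mu(E_j)^q\geq\left(\frac{1}{N}\sum_{j=1}^N \mu(E_j)\right)^q=\frac{\mu(E)^q}{N^q}.$$
Rearranging produces $\sum_j \mu(E_j)^q\geq \mu(E)^q/N^{q-1}=\mu(E)^q/\mathbf N_r(E)^{q-1}$, which is the desired inequality.

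There is no real obstacle here; the argument is completely routine. The only subtlety worth flagging is making the cover into a genuine disjoint partition before applying convexity, so that the masses $\mu(E_j)$ add up exactly to $\mu(E)$ rather than producing an overcount from overlapping $B_j$'s. Once that is done, the geometric inclusion $B_j\subset B(x,2r)$ for $x\in B_j$ and a single application of Jensen close the argument immediately.
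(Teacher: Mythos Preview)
Your proof is correct and follows exactly the approach the paper indicates: the paper does not prove this lemma but attributes it to \cite{OL05} and notes it is ``based on Jensen's inequality''; your argument (cover $E$ by $N=\mathbf N_r(E)$ balls, disjointify, use $E_j\subset B_j\subset B(x,2r)$, then apply Jensen to $t\mapsto t^q$) is precisely that, and is essentially identical to the proof the paper spells out for the $q=1$ analogue in Lemma~\ref{LEMJENSENL1}.
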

We then set
$$\mathcal R=\big\{\mu\in\pk;\ \forall i\in\{1,\dots,n\},\ \mu\big(B(x_i,r_0)\big)>0\big\}.$$
The set $\mathcal R$ is dense (Lemma \ref{LEMDENS2}) and open (Lemma \ref{LEMTOPOOPEN}). Moreover, for any $\mu\in\mathcal R$ and 
any $r>0$, 
$$I_\mu(2r,q)\geq\sup_{i=1,\dots,n} \frac{\mu\big(B(x_i,r_0)\big)^q}{\mathbf N_r\big(K\cap B(x_i,r_0)\big)^{q-1}}\geq \frac{C}{\inf_{i=1,\dots,n}\big(\mathbf N_r\big(K\cap B(x_i,r_0)\big)^{q-1}},$$
where $C=\inf_{i=1,\dots,n}\mu\big(B(x_i,r_0)\big)^q$.
Taking the logarithm, we find
$$\log I_\mu(2r,q)\geq \log{C}-(q-1)\inf_{i=1,\dots,n}\log \mathbf N_r\big(K\cap B(x_i,r_0)\big).$$
Now, for any $r$ in $(0,\alpha)$, this becomes
$$\log I_\mu(2r,q)\geq \log C+t(q-1)\log r.$$
We now divide this inequality by $(q-1)\log r$ (which is negative) and we take the limsup to obtain that $\dsupmuq\leq t$.

\subsection{Comparison of three notions of dimensions}\label{SECCANTOR}
We now show that the local uniform upper box dimension is not always equal to the (local) lower box dimension or to
the (local) upper box dimension. For simplicity, we set
\begin{eqnarray*}
 s^+&=&\inf_{x\in K}\dboxsuploc(x,K)\\
s_-&=&\inf_{x\in K}\dboxinfloc(x,K).
\end{eqnarray*}
That $s_u>s_-$ for some compact sets is easy: it suffices to take a Cantor set $K$ for which $\dboxinf(K)<\dboxsup(K)$. The uniformity
in the construction of the Cantor set ensures that
$$s_-=\dboxinf(K)\textrm{ and }s_u=s^+=\dboxsup(K)$$
(see \cite{Tri82} for details). 

To prove that it is possible that $s_u<s^+$, we also use Cantor sets. Let $(n_k)$ be a sequence of integers increasing to $+\infty$ with 
$n_0=1$ and $n_{k+1}>5n_k$. We define two sequences $(\alpha_n)$ and $(\beta_n)$ by $\alpha_0=\beta_0=1$  and
$$\alpha_{n+1}=\left\{
\begin{array}{ll}
\displaystyle 27&\textrm{ provided }n\in[n_k,2n_k)\\[0.25cm]
\displaystyle 3&\textrm{ provided }n\in[2n_k,3n_k)\\[0.25cm]
\displaystyle 9&\textrm{ otherwise}
\end{array}\right.$$
$$\beta_{n+1}=\left\{
\begin{array}{ll}
 \displaystyle 27 &\textrm{ provided }n\in[3n_k,4n_k)\\[0.25cm]
\displaystyle 3&\textrm{ provided }n\in[4n_k,5n_k)\\[0.25cm]
\displaystyle  9 &\textrm{ otherwise.}
\end{array}\right.$$

Define $E$ (resp. $F$) a Cantor subset of $[0,1]$
(resp. of $[2,3]$) as follows : $E=\bigcap_n E_n$ (resp. $F=\bigcap_n F_n$) where at each step
we divide each subinterval of $E_n$ (resp. $F_n$) into $\alpha_{n+1}$ (resp. $\beta_{n+1}$) intervals of size 
$100^{-(n+1)}$. Thus $E_n$ (resp. $F_n$) consists of $\alpha_1\times\dots\times\alpha_n$ (resp. $\beta_1\times\dots\times\beta_n$)
intervals of size $100^{-n}$.We finally set $K=E\cup F$. A key point in the construction is that, for each $n$, 
$$\min(\alpha_1\dots\alpha_n,\beta_1\dots\beta_n)=9^n.$$
Thus we need at most $9^n$ intervals of size $100^{-n}$ to cover either $E$ or $F$. On the contrary, when we look separately at $E$ or at $F$, we need 
for certain values of $n$ more intervals. 

\smallskip 

Let us proceed with the details. By uniformity of the construction of each Cantor set, it is not hard to show that
$$\inf_{x\in K}\dboxsuploc(x,K)=\inf\big(\dboxsup(E),\dboxsup(F)\big).$$
Now, 
$$\dboxsup(E)=\limsup_{n\to+\infty}\frac{\log \alpha_1\dots\alpha_n}{n\log100}=\lim_{k\to+\infty}\frac{\log9^{n_k}27^{n_k}}{2n_k\log100}=\frac{5\log 3}{2\log 100},$$
and, in the same way,
$$\dboxsup(F)=\limsup_{n\to+\infty}\frac{\log \beta_1\dots\beta_n}{n\log100}=\lim_{k\to+\infty}\frac{\log9^{3n_k}27^{n_k}}{4n_k\log100}=\frac{9\log 3}{4\log 100}.$$
On the contrary, we shall see that for this compact set $K$, $s_u\leq\frac{2\log 3}{\log 100}$ (actually, $s_u$ is equal to $\frac{2\log 3}{\log 100}$).
Indeed, it is enough to show that 
$$\limsup_{r\to 0}\left(\frac{\inf\big(\log \mathbf N_r(E),\log \mathbf N_r(F)\big)}{-\log r}\right)\leq\frac{2\log 3}{\log 100}.$$
This follows easily from
$$\limsup_{n\to+\infty}\frac{\log 9^n}{n\log 100}=\frac{2\log 3}{\log 100}.$$

\begin{remark}
 The local uniform upper box dimension can also be easily computed for a regular compact set. We recall that a compact set $K\subset\mathbb R^d$
with $\dimh(K)=s$ 
is called \emph{Ahlfors regular} if there are positive constants $c_1,c_2,r_0>0$ such that 
$$c_1(2r)^s\leq \mathcal H^s\big(K\cap B(x,r)\big)\leq c_2 (2r)^s$$
for all $x\in K$ and all $0<r<r_0$.
For a Ahlfors-regular compact set, it is easy (and classical) to check that
$$\inf_{x\in K}\dboxinfloc(x,K)=\dboxsuplocunif(K)=\dimh(K)=s.$$
This happens in particular if $K$ is a self-similar compact set satisfying the open set condition.
\end{remark}

\section{Typical $L^q$-dimensions, the remaining cases, $q\neq 1$}
In this section, we turn to the remaining cases, for $q\neq 1$. More precisely, we prove that a typical measure $\mu\in\pk$ satisfies
\begin{itemize}
 \item $\dsupmuq=\dboxsup(K)$ provided $q\in[0,1)$;
\item $\dsupmuq=+\infty$ provided $q<0$;
\item $\dinfmuq=0$ provided $q\in(0,1)$.
\end{itemize}
As previously, we can work with a fixed $q\in\mathbb R$.

\subsection{Typical upper $L^q$-dimension, $q\in[0,1)$}\label{SECLQUPPERQ01}
In \cite{OL08}, it is proved that any measure $\mu\in\pk$ satisfies $\dsupmuq\leq s$ where $s=\dboxsup(K)$. So, since the result
is trivial if $\dboxsup(K)=0$, we may assume that $s$ is positive, and we have just to prove that, given any $t\in(0,s)$, a typical 
$\mu\in\pk$ satisfies $\dsupmuq\geq t$. 

Let $(\mu_n)$ be a sequence of $\mathcal F(K)$ which is dense in $\pk$. Write $\mu_n=\sum_{i=1}^N p_i\delta_{x_i}$, where the $x_i$
are all different and each $p_i$ is nonzero. Let $\alpha_n>0$ be such that 
$$\alpha_n\leq \frac1n,\ N\alpha_n^t\leq\frac12\textrm{ and }4\alpha_n<\min_{i\neq j}\|x_i-x_j\|.$$
For these values of $t$ and $\alpha_n$, Lemma \ref{LEMFROSTMANUPPERBOX} gives us a real number $r_n\in(0,\alpha_n)$
and a measure $m_n\in\pk$ such that, for any $x\in K$, $m_n\big(B(x,r_n)\big)\leq r_n^t$. We then set
$$\veps_n=\frac1{-\log r_n},\ \nu_n=(1-\veps_n)\mu_n+\veps_n m_n\textrm{ and }Y_n=\bigcup_{i=1}^N B(x_i,r_n).$$
We first observe that 
\begin{eqnarray*}
 \nu_n(Y_n)&\leq&1-\veps_n+\veps_n\sum_{i=1}^N m_n\big(B(x_i,r_n)\big)\\
&\leq&1-\veps_n+\veps_nNr_n^t\\
&\leq&1-\frac{\veps_n}2.
\end{eqnarray*}
Moreover, for any $x\notin Y_n$, $\nu_n\big(B(x,r_n)\big)\leq  \veps_n r_n^t$, so that, 
\begin{eqnarray*}
 I_{\nu_n}(r_n,q)&\geq&\int_{Y_n^c}\frac{d\nu_n(x)}{\nu_n\big(B(x,r_n)\big)^{1-q}}\\
&\geq&\nu_n(Y_n^c)\veps_n^{q-1}r_n^{t(q-1)}\\
&\geq&\frac12 \veps_n^q{r_n^{t(q-1)}}
\end{eqnarray*}
Now, Corollary \ref{CORTOPO3} (see also the remark following it) gives us a real number $\delta_n>0$ such that, for any $\nu\in\mathcal P(K)$
satisfying $L(\nu,\nu_n)<\delta_n$, one has
$$I_{\nu}(r_n/2,q)\geq \frac 14\veps_n^qr_n^{t(q-1)}.$$
Taking the logarithm and dividing by $(q-1)\log r_n$, which is positive, we get
$$\frac{\log I_\nu(r_n/2,q)}{(q-1)\log(r_n)}\geq C_1+C_2\frac{ \log(-\log(r_n))}{\log r_n}+t.$$
Finally, we observe that since $(\veps_n)$ is going to zero, $(\nu_n)$ keeps dense in $\mathcal P(K)$. We then consider the dense $G_\delta$-set
$$\mathcal R=\bigcap_{m\geq 0}\bigcup_{n\geq m}B_L(\nu_n,\delta_n).$$
The work done before implies immediately that any $\nu\in\mathcal R$ satisfies $\dsupnuq\geq t$.  

\subsection{Typical upper $L^q$-dimension, $q<0$}
Let $(\mu_n)$ be a sequence in $\mathcal F(K)$ which is dense in $\pk$. For any $n\in\mathbb N$, let $m_n=\delta_{y_n}$ 
where $y_n$ does not belong to $\supp(\mu_n)$. Let $(r_n)$ be a sequence decreasing to zero such that 
$2r_n<\textrm{dist}(y_n,\supp(\mu_n))$. We set
$$\veps_n=r_n^n\textrm{ and }\nu_n=(1-\veps_n)\mu_n+\veps_n m_n.$$
Observe that $(\nu_n)$ remains dense in $\pk$. Moreover, the choice of $r_n$ ensures that
$$I_{\nu_n}(r_n,q)\geq\nu_n\big(\{y_n\})\nu_n\big(B(y_n,r_n)\big)^{q-1}=\nu_n\big(\{y_n\})^q=r_n^{-nq}.$$
Corollary \ref{CORTOPO3} gives us a real number $\delta_n>0$ such that any $\nu\in\pk$ satisfying $L(\nu,\nu_n)<\delta_n$
also verifies
\begin{eqnarray}
I_\nu(r_n/2,q)\geq C_q r_n^{-nq}.\label{EQUPPERLQINF0}
\end{eqnarray}
We then consider the dense $G_\delta$-set 
$$\mathcal R=\bigcap_{m\geq 0}\bigcup_{n\geq m}B_L(\nu_n,\delta_n).$$
Picking any $\nu\in\mathcal R$, we can find $n$ as large as we want and the corresponding $r_n$ as small as we want such that
(\ref{EQUPPERLQINF0}) holds true. Dividing by $(q-1)\log r_n$, which is positive, and taking the limsup, this yields $\dsupnuq=+\infty$, and this holds for a typical $\nu\in\pk$.

\subsection{Typical lower $L^q$-dimensions, $q\in(0,1)$}
We begin by applying the anti-Frostman lemma \ref{LEMANTIFROSTMAN} to get a measure $m\in\pk$ and some $C,r_0>0$ such that $m\big(B(x,r)\big)\geq Cr^t$ 
for all $x\in K$ and all $0<r\leq r_0$, where $t>\dboxsup(K)$. Let $(\mu_n)$ be a sequence of $\mathcal F(K)$ which is dense in $\pk$.
We write $\mu_n=\sum_{i=1}^N p_i\delta_{x_i}$ and we also set
$$r_n=\min\left(\frac 1n,\min_i(p_i)^{n(1-q)}\right),\ \veps_n=r_n^{t(1-q)/q}\textrm{ and }\nu_n=(1-\veps_n)\mu_n+\veps_n m.$$
Observe that $(\veps_n)$ goes to zero, so that $(\nu_n)$ remains dense in $\pk$. Let also $Y_n=\bigcup_{i=1}^N B(x_i,r_n)$. It is easy to check that
\begin{itemize}
 \item $\forall x\in Y_n,\ \nu_n\big(B(x,r_n)\big)\geq\min_i(p_i)/2$;
\item $\forall x\notin Y_n,\ \nu_n\big(B(x,r_n)\big)\geq C\veps_n r_n^t$.
\end{itemize}
Thus, one obtains
\begin{eqnarray*}
 I_{\nu_n}(r_n,q)& =&\int_{Y_n}\frac{d\nu_n(x)}{\nu_n\big(B(x,r_n)\big)^{1-q}}+\int_{Y_n^c}\frac{d\nu_n(x)}{\nu_n\big(B(x,r_n)\big)^{1-q}}\\
&\leq&C\left(\min(p_i)^{q-1}+\veps_n^qr_n^{-t(1-q)}\right).
\end{eqnarray*}
(in this proof, the letter $C$ denotes a positive real number which may change from line to line, remaining
always independent of $n$). The values that we impose on $r_n$ and $\veps_n$ ensure that
$$I_{\nu_n}(r_n,q)\leq C\left(\frac{1}{r_n^{1/n}}+1\right).$$
Now, Corollary \ref{CORTOPO3} gives us some $\delta_n>0$ such that any $\nu\in\mathcal P(K)$ with $L(\nu,\nu_n)<\delta_n$ satisfies
$$I_{\nu_n}(2r_n,q)\leq C\left(\frac{1}{r_n^{1/n}}+1\right).$$
It is now routine to prove that
$$\mathcal R=\bigcap_{m\geq 0}\bigcup_{n\geq m}B_L(\nu_n,\delta_n)$$
is the dense $G_\delta$-set we are looking for to prove that, typically, $\dinfmuq=0$ when $q\in(0,1)$.

\section{Typical $L^1$-dimension}

We now turn to the $L^1$-dimensions. Lemma \ref{LEMJENSEN} yields that, for a typical measure $\mu\in\pk$,
$$0\leq \dinfmu(1)\leq \dinfmu(1/2)=0\textrm{ and }s_u\leq \dsupmu(2)\leq \dsupmu(1)\leq\dsupmu(1/2)=s.$$
We now prove the nontrivial part of Theorem \ref{THMMAINL1}. We shall need several times the following analog of Lemma \ref{LEMJENSENLQ}.
\begin{lemma}\label{LEMJENSENL1}
Let $K$ be a compact subset of $\mathbb R^d$, let $A\subset K$, let $\mu\in\pk$ and let $r>0$. Then
$$\int_A\log\mu\big(B(x,2r)\big)d\mu(x)\geq-\mu\big(A(r)\big)\log\mathbf N_r(A)-e^{-1}.$$
\end{lemma}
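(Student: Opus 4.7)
The plan is to mimic the proof of Lemma \ref{LEMJENSENLQ}, replacing the power inequality by Jensen's inequality for $x\log x$. First I would fix an optimal cover of $A$ by $N:=\mathbf N_r(A)$ balls $B(y_i,r)$, $i=1,\dots,N$, and partition $A$ into the disjoint Borel pieces
$$A_i=\bigl(A\cap B(y_i,r)\bigr)\setminus\bigcup_{j<i}B(y_j,r).$$
The geometric observation is that, for any $x\in A_i$, the triangle inequality gives $B(y_i,r)\subset B(x,2r)$, hence $\mu\bigl(B(x,2r)\bigr)\geq\mu(A_i)$. Setting $a_i:=\mu(A_i)$ (with the convention $0\log 0=0$),
$$\int_A\log\mu\bigl(B(x,2r)\bigr)\,d\mu(x)=\sum_{i=1}^N\int_{A_i}\log\mu\bigl(B(x,2r)\bigr)\,d\mu(x)\geq\sum_{i=1}^N a_i\log a_i.$$

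Next I would apply Jensen's inequality to the convex function $f(t)=t\log t$ on $[0,1]$, using $\sum_i a_i=\mu(A)$ and the fact that there are exactly $N$ summands:
$$\sum_{i=1}^N a_i\log a_i\;=\;N\cdot\frac1N\sum_{i=1}^N f(a_i)\;\geq\;Nf\!\left(\frac{\mu(A)}{N}\right)=\mu(A)\log\mu(A)-\mu(A)\log N.$$
Finally I would use the elementary lower bound $\mu(A)\log\mu(A)\geq\min_{t\in[0,1]}t\log t=-e^{-1}$, together with $\mu(A)\leq\mu\bigl(A(r)\bigr)$ (since $A\subset A(r)$) and the nonnegativity of $\log N$, to get
$$\int_A\log\mu\bigl(B(x,2r)\bigr)\,d\mu(x)\;\geq\;-e^{-1}-\mu\bigl(A(r)\bigr)\log\mathbf N_r(A),$$
as desired.

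There is essentially no deep obstacle: the only bookkeeping points to watch are the degenerate cases (if $\mu(A)=0$ both sides are $\geq-e^{-1}$ trivially; if some $a_i=0$ the convention $0\log0=0$ absorbs it), and the fact that the centers $y_i$ are allowed to lie anywhere in $\mathbb R^d$—this is irrelevant to the triangle-inequality argument because only $x\in B(y_i,r)$ is used. The step I consider most delicate is choosing the right direction of Jensen's inequality; using convexity of $x\log x$ (rather than concavity of $-x\log x$) is what gives the desired lower bound on a sum with a prescribed number of terms.
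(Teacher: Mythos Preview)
Your proof is correct and follows essentially the same approach as the paper: cover $A$ by $N=\mathbf N_r(A)$ balls, disjointify, use $B(y_i,r)\subset B(x,2r)$ to reduce to $\sum a_i\log a_i$, and then apply Jensen's inequality for $t\mapsto t\log t$ together with $\min_{[0,1]}t\log t=-e^{-1}$. The only cosmetic difference is that you intersect the pieces with $A$ from the start (so $\sum_i a_i=\mu(A)\leq\mu(A(r))$), whereas the paper keeps the full pieces $E_i=B(x_i,r)\setminus\bigcup_{j<i}B(x_j,r)$ and uses $\bigcup_i E_i\cap K\subset A(r)$ at the end; both routes yield the same bound.
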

\begin{proof}
We follow \cite{Ol07}. For brevity, write $N=\mathbf N_r(A)$. Let $x_1,\dots,x_N\in K$ be such that 
$A\subset \bigcup_{i=1}^N B(x_i,r)$. Put $E_1=B(x_1,r)$ and $E_i=B(x_i,r)\backslash \bigcup_{j=1}^{i-1}B(x_j,r)$ 
for $i=2,\dots,N$. Since for any $x\in E_i$, $E_i\subset B(x,2r)$, we may write
\begin{eqnarray*}
\int_A \log\mu\big(B(x,2r)\big)d\mu(x)&=&\sum_i \int_{A\cap E_i}\log \mu\big(B(x,2r)\big)d\mu(x)\\
&\geq&\sum_i \int_{A\cap E_i}\log \big(\mu(E_i\cap K)\big)d\mu(x)\\
&\geq&\sum_i \mu(E_i\cap K)\log\big(\mu(E_i\cap K)\big).
\end{eqnarray*}
Now, we apply Jensen's inequality to the convex function $\phi:(0,+\infty)\to\mathbb R$, $t\mapsto t\log t$. Observing that 
$\bigcup_{i=1}^N E_i\cap K\subset A(r)$ and that $\phi$ is bounded from below by $e^{-1}$, we get
\begin{eqnarray*}
\int_A \log\mu\big(B(x,2r)\big)d\mu(x)&\geq&N\sum_{i=1}^N \frac1N\phi\big(\mu(E_i\cap K)\big)\\
&\geq&N \phi\left(\sum_i \frac 1N \mu(E_i\cap K)\right)\\
&\geq&\phi\left(\mu\left(\bigcup_i E_i\cap K\right)\right)-\mu\left(\bigcup_i E_i\cap K\right)\log N\\
&\geq&-e^{-1}-\mu\big(A(r)\big)\log N.
\end{eqnarray*}
\end{proof}
We divide the proof of Theorem \ref{THMMAINL1} into four parts. For convenience, we set :
$$\sconv=\dboxsupconv(K)\textrm{ and }\sconvmax=\dboxsupconvmax(K).$$
\subsection{The lower bound}
Let $t<\sconv$. One intends to show that $\dsupmu(1)\geq t$ for a typical measure $\mu\in\pk$. 
Let $(x_i)_{i\geq 1}$ be a dense sequence of distinct points in $K$ and let $n\geq 1$. Let $\rho_n>0$ be such that 
$4\rho_n<\min(\|x_i-x_j\|;\ 1\leq i<j\leq n)$ and $\rho_n<1/n$. Let finally $p_1,\dots,p_n>0$ be such that $\sum_i p_i=1$. 
By definition of $\sconv$, one may find $r_n<\rho_n$ such that, setting $P_i=\mathbf P_{r_n}\big(K\cap B(x_i,\rho_n)\big)$,
$$\sum_{i=1}^n p_i\log(P_i)\geq -t\log r_n.$$
We then consider, for a fixed $i$ in $1,\dots,n$, points $x_i^j$, $j=1,\dots,P_i$, which are the centers
in $K\cap B(x_i,\rho_n)$ of disjoint balls of radius $r_n$. We observe that our choice of $\rho_n$ ensures that 
all balls $B(x_i^j,\rho_n)$, $i=1,\dots,n$, $j=1,\dots,P_i$ are pairwise disjoint. We set
$$\mu_{n,(p_i)}=\sum_{i=1}^n \frac{p_i}{P_i}\sum_{j=1}^{P_i}\delta_{x_i^j}$$
and we majorize $I_{\mu_{n,(p_i)}}(r_n,1)$:
\begin{eqnarray*}
 I_{\mu_{n,(p_i)}}(r_n,1)&=&\sum_{i=1}^n \frac{p_i}{P_i}\sum_{j=1}^{P_i}\log\left(\mu_{n,(p_i)}\big(B(x_i^j,r_n)\big)\right)\\
&=&\sum_{i=1}^n p_i\log(p_i)-\sum_{i=1}^n p_i\log(P_i)\\
&\leq&t\log r_n.
\end{eqnarray*}
Now, Corollary \ref{CORTOPOL1} gives us a real number $\delta_{n,(p_i)}>0$ such that $L(\nu,\mu_{n,(p_i)})<\delta_{n,(p_i)}$
implies $I_\nu(r_n/2,1)\leq \log 2+\left(1-\frac 1n\right)t\log r_n$.
We conclude by setting
\begin{eqnarray*}
 \Lambda_n&=&\left\{\mu_{n,(p_i)};\ p_i>0,\ \sum_i p_i=1\right\}\\
\mathcal R_n&=&\bigcup_{\mu_{n,(p_i)}\in \Lambda_n}B_L(\mu_{n,(p_i)},\delta_{n,(p_i)})\\
\mathcal R&=&\bigcap_{m\geq 1}\bigcup_{n\geq m}\mathcal R_n.
\end{eqnarray*}
$\mathcal R$ is a dense $G_\delta$ subset of $\pk$ and any $\nu\in\mathcal R$ satisfies
$$\overline{D}_\nu(1)\geq\limsup_{n\to+\infty}\left(1-\frac1n\right)t=t.$$

\subsection{The upper bound}
Let $t>\sconvmax$. One intends to show that there exists a dense and open set of measures $\mu$ of $\pk$
such that $\dsupmu(1)\leq t$. We first fix $t_0\in(\sconvmax,t)$ and $\eta>0$ such that $(1+2\eta)t_0<t$.
Let $(y_n)$ be a dense sequence of distinct points in $K$ and let $(\rho_n)$ be a sequence of $(0,1)$ such that
$$\left\{
\begin{array}{l}
 \rho_n\leq \frac1n\\
B(y_i,2\rho_n)\cap B(y_j,2\rho_n)=\varnothing\textrm{ for any }i\neq j\textrm{ in }\{1,\dots,n\}.
\end{array}\right.$$
Let $N\geq 1$ and let $n\leq N$. By definition of $\sconvmax$, we know that we may find an integer
$M_{n,N}>0$, elements $x_n^1,\dots,x_n^{M_{n,N}}$ contained in $B(y_n,\rho_n)$, a real number $\delta_{n,N}\in(0,\rho_N)$
and real numbers $p_n^1,\dots,p_n^{M_{n,N}}$ in $(0,1)$ with $\sum_i p_n^i=1$ satisfying
$$\sum_i \frac{p_n^i \log \mathbf N_r\big(K\cap B(x_n^i,\delta_{n,N})\big)}{-\log r}\leq t_0$$
provided $r$ is small enough. Reducing $\delta_{n,N}$ if necessary, we can always assume that
the balls $B(x_n^i,2\delta_{n,N})$, for $i=1,\dots,M_{n,N}$, are pairwise disjoint and that they are contained
in $B(y_n,\rho_N)$. Let also $\mathcal Q_N$ be defined by
$$\mathcal Q_N=\left\{(q_1,\dots,q_N)\in(0,1);\ \sum_i q_i=1\right\}.$$
For any $\mathbf q=(q_1,\dots,q_N)$ in $\mathcal Q_N$, we also fix $\veps_{N,\mathbf q}\in(0,1)$ satisfying
$$\left\{
\begin{array}{rcl}
 (d+1)\veps_{N,\mathbf q}&\leq& \eta t_0\\
\veps_{N,\mathbf q}(1-p_n^i q_n)&\leq&\eta p_n^i q_n\textrm{ for any }n=1,\dots,N\textrm{ and any }i=1,\dots,M_{n,N}.
\end{array}\right.$$
We finally set, for $N\geq 1$ and $\mathbf q\in \mathcal Q_N$, 
\begin{eqnarray*}
 \mathcal U_{N,\mathbf q}=\Bigg\{\mu\in\pk&;\ &\forall n=1,\dots,N,\ \forall i=1,\dots,M_{n,N}, \\
&&\mu\big(B(x_n^i,\delta_{n,N}/2)\big)>(1-\veps_{N,\mathbf q})p_n^i q_n\textrm{ and }\\
&&\mu\big(B(y_n,\rho_N) \big)>(1-\veps_{N,\mathbf q})q_n\Bigg\}.
\end{eqnarray*}
Each $\mathcal U_{N,\mathbf q}$ is open by Lemma \ref{LEMTOPOOPEN} and $\bigcup_{N,\mathbf q}\mathcal U_{N,\mathbf q}$ is dense
by Lemma \ref{LEMDENS1}. Let us show that any $\mu\in\mathcal U_{N,\mathbf q}$ satisfies $\dsupmu(1)\leq t$. 
We set 
$$Y_N=K\backslash\bigcup_{n=1}^N \bigcup_{i=1}^{M_{n,N}}B(x_n^i,\delta_{n,N})$$
and we apply Lemma \ref{LEMJENSENL1} to each $B(x_n^i,\delta_{n,N})$ and to $Y_N$. Taking into account
that we have a partition of $K$, we thus obtain
\begin{eqnarray*}
 I_{\mu}(2r,1)\!&\!\geq\!&-\sum_{n=1}^N \sum_{i=1}^{M_{n,N}}\mu\big(B(x_n^i,\delta_{n,N}+r)\big)\log\mathbf N_r\big(K\cap B(x_n^i,\delta_{n,N})\big)\\
&&-\mu\big(Y_N(r)\big)\log\mathbf N_r(Y_N)+C_\mu,
\end{eqnarray*}
where $C_\mu$ does not depend on $r$. 
We first look at the last term. Provided $r$ is small enough, 
$$Y_N(r)\subset K\backslash \bigcup_{n=1}^N \bigcup_{i=1}^{M_{n,N}}B(x_n^i,\delta_{n,N}/2)$$
so that
\begin{eqnarray*}
 \mu\big(Y_N(r)\big)&\leq&1-\sum_{n}\sum_{i}(1-\veps_{N,\mathbf q})p_n^iq_n\\
&\leq&\veps_{N,\mathbf q}.
\end{eqnarray*}
On the other hand, since $Y_N$ is contained in $\mathbb R^d$, which has dimension $d$, we know that
$$\log\big(\mathbf N_r(Y_N)\big)\leq -(d+1)\log r$$
provided $r$ is small enough. Thus, taking into account the value of $\veps_{N,\mathbf q}$, 
$$-\mu\big(Y_N(r)\big)\log \mathbf N_r(Y_N)\geq\eta t_0\log r.$$

\smallskip

The analysis of the first term is slightly more delicate. We first observe that, provided $r$ is small enough,
\begin{eqnarray*}
 \mu\big(B(x_n^i,\delta_{n,N}+r)\big)&\leq&\mu\big(B(x_n^i,2\delta_{n,N})\big)\\
&\leq&\mu\big(B(y_n,\rho_N)\big)-\sum_{j\neq i}\mu\big(B(x_n^j,\delta_{n,N})\big)\\
&\leq&1-\sum_{m\neq n}\mu\big(B(y_m,\rho_N)\big)-\sum_{j\neq i}(1-\veps_{N,\mathbf q})p_n^jq_n\\
&\leq&1-(1-\veps_{N,\mathbf q})\sum_{m\neq n}q_m-(1-\veps_{N,q})(1-p_n^i)q_n\\
&\leq&1-(1-\veps_{N,\mathbf q})(1-q_n)-(1-\veps_{N,q})(1-p_n^i)q_n\\
&\leq&p_n^iq_n+\veps_{N,\mathbf q}(1-p_n^iq_n)\\
&\leq&(1+\eta)p_n^iq_n.
\end{eqnarray*}
Thus,
\begin{eqnarray*}
 I_\mu(2r,1)&\geq& -(1+\eta)\sum_nq_n\sum_i p_n^i\log\mathbf N_r\big(K\cap B(x_n^i,\delta_{n,N})\big)+\eta t_0\log r+C_\mu\\
&\geq&(1+2\eta)t_0\log r+C_\mu,
\end{eqnarray*}
provided again that $r$ is small enough. Dividing by $\log r$ and taking the limsup, we find 
$$\dsupmu(1)\leq (1+2\eta)t_0\leq t.$$

\subsection{Optimality of the lower bound}
Suppose that there exists $t>\sconv$ such that 
$\mathcal R=\big\{\mu\in\pk;\ \dsupmu(1)\geq t\big\}$ is residual.
We shall arrive at a contradiction. The argument follows rather closely that of the previous
subsection.
We fix $t_0\in(\sconv,t)$ and $\eta>0$ such that $(1+2\eta)t_0<t$. By definition of $\sconv$,
one may find $x_1,\dots,x_N\in K$, $\rho>0$ with $4\rho<\min(\|x_i-x_j\|;\ i\neq j)$
and $p_1,\dots,p_N>0$ with $\sum_i p_i=1$ such that any $r>0$ sufficiently small verifies
$$\sum_{i=1}^N p_i\log\Big(\mathbf N_r\big(K\cap B(x_i,\rho)\big)\Big)\leq (-\log r)t_0.$$
Now, let $\veps>0$ satisfying the following conditions:
$$\left\{
\begin{array}{rcl}
 (d+1)\veps&\leq& \eta t_0\\
\veps&\leq&\eta\frac{p_i}{1-p_i}\textrm{ for any }i=1,\dots,N.
\end{array}\right.$$
Let finally 
$$\mathcal U=\bigcap_{i=1}^N \left\{\mu\in\pk;\ \mu\big(B(x_i,\rho/2)\big)>(1-\veps)p_i\right\}.$$
$\mathcal U$ is open and nonempty and one can pick $\mu\in\mathcal U\cap \mathcal R$. We set 
$Y=K\backslash \bigcup_{i=1}^N B(x_i,\rho)$ and we apply Lemma \ref{LEMJENSENL1} to get
\begin{eqnarray*}
 I_\mu(2r,1)&\geq&-\sum_{i=1}^N \mu\big(B(x_i,\rho+r)\big)\log \mathbf N_r\big(K\cap B(x_i,\rho)\big)-\mu\big(Y(r)\big)\log\mathbf N_r(Y)+C_\mu\\
&\geq&-\sum_{i=1}^N \mu\big(B(x_i,2\rho)\big)\log \mathbf N_r\big(K\cap B(x_i,\rho)\big)-\mu\big(Y(\rho/2)\big)\log\mathbf N_r(Y)+C_\mu
\end{eqnarray*}
provided $r<\rho/2$. Now we observe that
\begin{eqnarray*}
 \mu\big(B(x_i,2\rho)\big)&\leq&1-\sum_{j\neq i}\mu\big(B(x_j,\rho)\big)\\
&\leq&1-\sum_{j\neq i}p_j(1-\veps)\\
&\leq&p_i+\veps(1-p_i)\leq(1+\eta)p_i.
\end{eqnarray*}
In the same vein, we also get
$$\mu\big(Y(\rho/2)\big)\leq 1-\sum_j \mu\big(B(x_j,\rho/2)\big)\leq\veps,$$
whereas, provided $r$ is small enough,
$$\log \mathbf N_r(Y)\leq -(d+1)\log r.$$
Coming back to $I_\mu(2r,1)$ and summarizing the previous results, we conclude that
\begin{eqnarray*}
 I_\mu(2r,1)&\geq&-\sum_{i=1}^N (1+\eta)p_i\log\mathbf N_r\big(K\cap B(x_i,\rho)\big)+(d+1)\veps\log r+C_\mu\\
&\geq&(1+2\eta)t_0\log r +C_\mu
\end{eqnarray*}
where $C_\mu$ does not depend on $r$. Dividing by $\log r$ and taking the limsup, 
we obtain that $\dsupmu(1)\leq (1+2\eta)t_0<t$, a contradiction with $\mu\in\mathcal R$.

\subsection{Optimality of the upper bound}
Suppose that there exists $t<\sconvmax$ such that $\mathcal R=\big\{\mu\in\pk;\ \dsupmu(1)\leq t\big\}$ is residual.
Let $t_0\in(t,\sconvmax)$ and let $y\in K$, $\rho>0$ be such that, setting $E=K\cap B(y,\rho)$, 
$\dboxsupconv(E)>t_0$. Let also $\beta>0$ be such that $\frac{(1-\beta)^3}{1+\beta}t_0>t$. 

We start from a sequence $(\mu_n)\subset\fk$ which is dense in $\pk$ and such that $\mu_n(E)>0$ for any $n\geq 1$. We fix $n\geq 1$ and we write
$$\mu_n=\sum_{i=1}^q q_i\delta_{y_i}+\sum_{i=1}^p p_i\delta_{x_i}$$
where $x_i\in E$, $y_i\notin E$, $p_i,q_i>0$. Let $\alpha>0$ be such that $4\alpha<\min(\|x_i-x_j\|;\ i\neq j)$, 
$4\alpha<\min(\|y_i-y_j\|;\ i\neq j)$, $4\alpha<\min(\|x_i-y_j\|)$ and $\alpha<1/n$.
By definition of the convex upper box dimension, we know that one can find $r_n<\min(\alpha,1/n)$ as small as we want such that
$$\sum_{i=1}^p p_i \log \mathbf P_{r_n}\big(E\cap B(x_i,\alpha)\big) \geq (-\log r_n)t_0\sum_{i=1}^p p_i=(-\log r_n)t_0\mu_n(E).$$
For simplicity, we set $P_i=\mathbf P_{r_n}\big(E\cap B(x_i,\alpha)\big)$ and we consider
points $x_{i}^1,\dots,x_{i}^{P_i}$ all in $E\cap B(x_i,\alpha)$ such that the balls
$B(x_{i}^j,r_n)$ are disjoint. We then set
\begin{eqnarray*}
 \nu_{n,i}&=&\frac{1}{P_i}\sum_{j=1}^{P_i}\delta_{x_i^j}\\
\nu_n&=&\sum_{i=1}^q q_i\delta_{y_i}+\sum_{i=1}^p p_i\nu_{n,i}.
\end{eqnarray*}
$\nu_n$ is close to $\mu_n$. Indeed, for any $f\in\textrm{Lip}(K)$, 
$$\left|\int_K fd\nu_n-\int_K fd\mu_n\right|\leq\sum_{i=1}^p p_i\|x_i-x_i^j\|\leq\alpha\leq\frac 1n.$$
In particular, the sequence $(\nu_n)$ keeps dense in $\pk$. Furthermore, $\nu_n(E)=\mu_n(E)>0$ and
\begin{eqnarray*}
 I_{\nu_n}(r_n,1)&=&\sum_{i=1}^q q_i\log q_i+\sum_{i=1}^p \frac{p_i}{P_i}\sum_{j=1}^{P_i}\log\left(\frac{p_i}{P_i}\right)\\
&=&\sum_{i=1}^q q_i\log q_i+\sum_{i=1}^p p_i\log(p_i)-\sum_{i=1}^p p_i\log(P_i)\\
&\leq&\sum_{i=1}^q q_i\log(q_i)+\sum_{i=1}^p p_i\log(p_i)+\log(r_n)t_0\nu_n(E).
\end{eqnarray*}
Provided $r_n$ is small enough, this implies
$$I_{\nu_n}(r_n,1)\leq (1-\beta)t_0\log(r_n)\nu_n(E).$$
Now, Lemma \ref{LEMTOPO1} and Corollary \ref{CORTOPOL1} give us a real number $\delta_n>0$ such that $L(\nu,\nu_n)<\delta_n$
implies
$$\left\{\begin{array}{rcl}
  I_{\nu}(r_n/2,1)&\leq&(1-\beta) I_{\nu}(r_n,1)+\log 2\\\
\nu(E)&\leq& \nu_n\big(E(\alpha)\big)+\beta\nu_n(E)=(1+\beta)\nu_n(E).        
         \end{array}\right.
$$
Thus,
$$I_{\nu}(r_n/2,1)\leq\frac{(1-\beta)^2}{1+\beta}t_0\log (r_n)\nu(E)+\log 2.$$
We define 
$$\mathcal R'=\bigcap_{m\geq 1}\bigcup_{n\geq m}B_L(\nu_n,\delta_n)$$
which is a dense $G_\delta$-subset of $\pk$ such that any $\nu\in \mathcal R'$
verifies $\dsupnu(1)\geq \frac{(1-\beta)^2}{1+\beta}t_0\nu(E)$.
Finally, we considered
$$\mathcal U=\big\{\mu\in\pk;\ \mu(E)>1-\beta\big\}$$
which is open and nonempty. Then any measure $\mu$ in the nonempty intersection $\mathcal R\cap\mathcal R'\cap\mathcal U$ satisfies
the contradictory conditions 
$$\dsupmu(1)\leq t\textrm{ and }\dsupmu(1)\geq\frac{(1-\beta)^3}{1+\beta}t_0>t.$$

\subsection{Examples}

To apply Theorem \ref{THMMAINL1}, we need to be able to compute the dimensions which are involved. 
This is indeed possible for simple compact spaces. 

\begin{example}
 Let $K=\{0\}\cup[1,2]$. Then a typical measure $\mu\in\pk$ satisfies $\dsupmu(1)\in[0,1]$ and this interval is the best possible.
\end{example}
\begin{proof}
 That $\sconv=0$ is easy. It suffices to take in the definition of the convex upper box dimension $N=1$ and $x_1=0$. 
To prove that $\sconvmax=1$, one may observe that $\dboxsupconv([1,2])=1$.
\end{proof}
\begin{example}
 Let $E$ and $F$ be the two Cantor sets defined in Section \ref{SECCANTOR} and let $K=E\cup F$. Then a typical
measure $\mu\in\pk$ satisfies $\dsupmu(1)\in\left[\frac{13\log 3}{6\log 100},\frac{5\log 3}{2\log 100}\right]$ and this interval
is the best possible.
\end{example}
An interesting feature of the previous example is that $\sconv>s_u$.
\begin{proof}
 By homogeneity of the two Cantor sets, to compute $\sconv$, we just need to compute
$$\inf_{\alpha+\beta=1}\limsup_{n\to+\infty}\frac{\alpha \mathbf N_{100^{-n}}(E)+\beta\mathbf N_{100^{-n}}(F)}{n\log 100}.$$
From the construction of the Cantor sets, it is easy to check that, for a fixed choice of $\alpha$ and $\beta$, the 
limsup will be attained along either the sequence $(2n_k)_k$ or the sequence $(4n_k)_k$. Moreover
$$\begin{array}{ll}
 \mathbf N_{100^{-2n_k}}(E)=9^{n_k}27^{n_k}=3^{5n_k}&\mathbf N_{100^{-2n_k}}(F)=3^{4n_k}\\
 \mathbf N_{100^{-4n_k}}(E)=3^{8n_k}&\mathbf N_{100^{-4n_k}}(F)=3^{9n_k}.
\end{array}$$
Thus,
\begin{eqnarray*}
 \sconv&=&\inf_{\alpha+\beta=1}\max\left((5\alpha+4\beta)\frac{\log 3}{2\log 100},(8\alpha+9\beta)\frac{\log 3}{4\log 100}\right)\\
&=&\frac{\log 3}{4\log 100}\inf_{\alpha\in[0,1]}(8+2\alpha,9-\alpha).
\end{eqnarray*}
The minimum is obtained for $\alpha=1/3$ (when the two terms are equal) so that
$$\sconv=\frac{13\log 3}{6\log 100}.$$
That $\sconvmax=\dimp(K)=\frac{5\log 3}{2\log 100}$ is easier. Indeed, we can restrict ourselves to $E$ where, by homogeneity,
$\dboxsupconv(E)=\dboxsup(E)$.
\end{proof}
\begin{example}
 Let $K$ be a Ahlfors-regular compact set with Hausdorff dimension $s$. Then a typical measure $\mu\in\pk$ satisfies
$\dsupmu(1)=s$.
\end{example}
\begin{proof}
 For such a compact set,
$$s\leq\inf_{x\in K}\dboxinfloc(x,K)\leq s_{\rm conv}\leq s_{\rm conv}^{\rm max}\leq\dboxsup(K)=s.$$
\end{proof}

\section{Concluding remarks}

\subsection{Case $q=\pm\infty$}
The $L^q$-dimensions of a probability measure $\mu$ have also a meaning for $q=-\infty$ and $q=+\infty$.
Their definitions are
\begin{eqnarray*}
  \dsupmu(-\infty)&=&\limsup_{r\to 0}\frac{\log\inf_{x\in\supp(\mu)}\mu\big(B(x,r)\big)}{\log r}\\
 \dinfmu(-\infty)&=&\liminf_{r\to 0}\frac{\log\inf_{x\in\supp(\mu)}\mu\big(B(x,r)\big)}{\log r}\\
  \dsupmu(+\infty)&=&\limsup_{r\to 0}\frac{\log\sup_{x\in\supp(\mu)}\mu\big(B(x,r)\big)}{\log r}\\
 \dinfmu(+\infty)&=&\liminf_{r\to 0}\frac{\log\sup_{x\in\supp(\mu)}\mu\big(B(x,r)\big)}{\log r}.
  \end{eqnarray*}
In \cite{Ol07}, Olsen shows that a typical $\mu\in\pk$ satisfies
$$\dinfmu(+\infty)=0\textrm{ and }\inf_{x\in K}\dboxinfloc(x,K)\leq\dsupmu(+\infty)\leq\inf_{x\in K}\dboxsuploc(x,K).$$
Our methods allow us to determine the exact typical value of the $L^q$-dimensions, for $q=-\infty$ and $q=+\infty$.
\begin{theorem}
 Let $K$ be an infinite compact subset of $\mathbb R^d$. Then a typical $\mu\in\pk$ satisfies
$$\begin{array}{ll}
   \dinfmu(+\infty)=0\quad&\dsupmu(+\infty)=\dboxsuplocunif(K)\\
\dinfmu(-\infty)=\dboxinf(K)\quad&\dsupmu(-\infty)=+\infty.
  \end{array}$$
\end{theorem}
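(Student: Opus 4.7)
For $\dsupmu(+\infty)=\dboxsuplocunif(K)$, I reproduce both halves of Section~4. For the upper bound, given $t>\dboxsuplocunif(K)$ the definition provides $x_1,\dots,x_N\in K$ and $r_0>0$ such that for all sufficiently small $r$ some $i$ satisfies $\mathbf P_r(K\cap B(x_i,r_0))\le r^{-t}$. On the open dense set $\{\mu\in\pk : \mu(B(x_j,r_0))>0\ \forall j\}$ (Lemmas \ref{LEMDENS2} and \ref{LEMTOPOOPEN}), pigeonhole on the balls of a maximal $r$-packing of $K\cap B(x_i,r_0)$ gives $\sup_{z\in K}\mu(B(z,2r))\ge cr^t$, hence $\dsupmu(+\infty)\le t$. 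For the lower bound I reuse verbatim the construction of Section \ref{SECUPPERUNIFORMBOXANDLQDIM}: given $t<\dboxsuplocunif(K)$, Lemma \ref{LEMFROSTMANUPPERBOX} supplies local measures $\mu_{n,i}$ supported in $B(x_i,1/n)$ with $\mu_{n,i}(B(\cdot,r_n))\le r_n^t$; their convex combinations are dense by Lemma \ref{LEMDENS1}, and Lemma \ref{LEMTOPO1} yields $\sup_x\nu(B(x,r_n/2))\le 2r_n^t$ for $\nu$ in a small $L$-ball, which on the standard dense $G_\delta$ forces $\dsupnu(+\infty)\ge t$. The equality $\dinfmu(+\infty)=0$ is Olsen's theorem recalled above.

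For $\dinfmu(-\infty)=\dboxinf(K)$ the inequality $\ge$ uses that $\{\mu\in\pk : \supp(\mu)=K\}$ is a dense $G_\delta$ (intersection of the open dense sets $\{\mu : \mu(U_n)>0\}$ for a countable basis $(U_n)$ of open subsets of $K$ meeting $K$), combined with the pigeonhole bound $\inf_{x\in\supp(\mu)}\mu(B(x,r))\le 1/\mathbf P_r(K)$: among any $\mathbf P_r(K)$ pairwise disjoint balls of radius $r$ centered on $K=\supp(\mu)$ at least one has mass at most $1/\mathbf P_r(K)$, and the bound $\mathbf P_r(K)\ge r^{-(\dboxinf(K)-\veps)}$ valid for all small $r$ then gives the liminf estimate. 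For the reverse inequality, given $t>\dboxinf(K)$ and a dense sequence $(\mu_n)\subset\fk$, Lemma \ref{LEMANTIFROSTMANLOWERBOX} produces $m_n\in\pk$ and $r_n\in(0,1/n)$ with $m_n(B(x,r_n))\ge 2^{-t}r_n^t$ for \emph{every} $x\in K$. Setting $\nu_n=(1-\veps_n)\mu_n+\veps_n m_n$ with $\veps_n=r_n^{1/n}$, Lemma \ref{LEMTOPO1} applied uniformly with $\alpha=r_n$ and a sufficiently small $\beta$ upgrades the uniform lower bound on $\nu_n(B(x,r_n))$ to $\nu(B(x,2r_n))\ge C\veps_n r_n^t$ for all $\nu$ in some $B_L(\nu_n,\delta_n)$, and the usual dense-$G_\delta$ argument $\mathcal R=\bigcap_m\bigcup_{n\ge m}B_L(\nu_n,\delta_n)$ concludes $\dinfnu(-\infty)\le t$.

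For $\dsupmu(-\infty)=+\infty$ I mimic Section 5.2. Starting from a dense $(\mu_n)\subset\fk$, I pick $y_n\in K\setminus\supp(\mu_n)$ and $r_n>0$ with $r_n<1/n$ and $2r_n<\textrm{dist}(y_n,\supp(\mu_n))$, and set $\nu_n=(1-r_n^n)\mu_n+r_n^n\delta_{y_n}$. Two applications of Lemma \ref{LEMTOPO1} show that for $\nu$ close enough to $\nu_n$ one has simultaneously $\nu(B(y_n,r_n))\ge r_n^n/2>0$, so that $\supp(\nu)\cap\overline{B(y_n,r_n)}\ne\varnothing$, and $\nu(B(y_n,2r_n))\le 2r_n^n$. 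Picking any $x\in\supp(\nu)\cap\overline{B(y_n,r_n)}$ gives $\nu(B(x,r_n))\le\nu(B(y_n,2r_n))\le 2r_n^n$, hence $\inf_{x\in\supp(\nu)}\nu(B(x,r_n))\le 2r_n^n$; for $\nu$ in the dense $G_\delta$ obtained by intersecting suitable $L$-balls around the $\nu_n$, this estimate holds along an infinite subsequence of $n$, forcing $\dsupnu(-\infty)=+\infty$. The main difficulty in all this is the upper bound $\dinfmu(-\infty)\le\dboxinf(K)$: the uniform-in-$x$ lower bound on $m_n(B(x,r_n))$ must survive the perturbation, and Lemma \ref{LEMTOPO1} only preserves such a bound up to an additive error $\beta$, so one must keep $\beta$ strictly smaller than the target $\veps_n 2^{-t}r_n^t$, which forces a careful coupling between the choice of $\veps_n$ (which also governs how well $(\nu_n)$ approximates the dense sequence $(\mu_n)$) and the $L$-radius $\delta_n$.
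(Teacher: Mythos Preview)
Your argument is correct and follows essentially the same strategy as the paper, with two differences worth noting.

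First, for the upper bound $\dsupmu(+\infty)\le s_u$ and for $\dsupmu(-\infty)=+\infty$, the paper does not argue directly: it uses the trivial monotonicity $\dsupmu(+\infty)\le\dsupmuq\le\dsupmu(-\infty)$ together with Theorem~\ref{THMMAINLQSPECTRUM} (typically $\dsupmu(2)=s_u$ and $\dsupmu(-1)=+\infty$). Your direct pigeonhole argument for $\dsupmu(+\infty)\le s_u$ is exactly the $q=+\infty$ analogue of Section~4.2, and your construction for $\dsupmu(-\infty)=+\infty$ is the $q=-\infty$ analogue of Section~5.2, so nothing is really new there --- you simply unfold what the paper cites.

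Second, and more interestingly, for $\dinfmu(-\infty)\ge\dboxinf(K)$ you take a cleaner route than the paper. You observe once and for all that $\{\mu\in\pk:\supp(\mu)=K\}$ is a dense $G_\delta$, and then for such $\mu$ run the pigeonhole bound $\inf_{x\in K}\mu(B(x,r))\le 1/\mathbf P_r(K)$ at \emph{every} scale $r$. The paper instead builds, for each dyadic scale $n$, a specific maximal $2^{-n}$-packing $(x_j)_j$, requires $\mu(B(x_j,2^{-(n+1)}))>0$ for all $j$, and then locates a point of $\supp(\mu)$ inside the lightest packing ball. Your formulation avoids fixing a packing at each scale in advance and is slightly more transparent; the paper's version has the minor advantage that openness of each $\mathcal R_n$ is a direct consequence of Lemma~\ref{LEMTOPOOPEN}(a) without any discussion of bases. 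For the reverse inequality $\dinfmu(-\infty)\le\dboxinf(K)$ the two proofs are essentially identical; the paper uses an auxiliary $t'\in(\dboxinf(K),t)$ with $\veps_n=r_n^{\,t-t'}$, while your choice $\veps_n=r_n^{1/n}$ achieves the same effect of making the perturbation negligible in the final ratio. One small point: in your $\dsupmu(-\infty)$ step you need $B(y_n,2r_n+\alpha)$ to miss $\supp(\mu_n)$ to bound $\nu(B(y_n,2r_n))$ from above via Lemma~\ref{LEMTOPO1}; this is harmless since you may as well take $4r_n<\textrm{dist}(y_n,\supp(\mu_n))$ instead of $2r_n$.
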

Among these values, that of $\dinfmu(-\infty)$ is surprising, because it is different from the value of $\dinfmuq$ when $q<0$.
\begin{proof}
 For commodity reasons, we set $s_u=\dboxsuplocunif(K)$. It is easy to show that, for any $q\in\mathbb R$,
$$\dinfmu(+\infty)\leq \dinfmuq\leq \dinfmu(-\infty)\textrm{ and }\dsupmu(+\infty)\leq \dsupmuq\leq \dsupmu(-\infty).$$
Thus, Theorem \ref{THMMAINLQSPECTRUM} already implies that $\dsupmu(+\infty)\leq s_u$ and that $\dsupmu(-\infty)=+\infty$
for a typical $\mu\in\pk$. Thus, it remains to show the typical (in)equalities $\dsupmu(+\infty)\geq s_u$
and $\dinfmu(-\infty)=\dboxinf(K)$. We begin by showing that $\dsupmu(+\infty)\geq s_u$
for a typical $\mu\in\pk$. Let $t<s_u$ and let us apply the results of Section \ref{SECUPPERUNIFORMBOXANDLQDIM}.
They provide a dense $G_\delta$-set $\mathcal R$ and a sequence $(r_n)$ going to zero such that, for any $\nu\in\mathcal R$,
we can find $n$ as large as we want such that 
$$\nu\big(B(x,r_n)\big)\leq Cr_n^t.$$
This immediately yields $\dsupnu(+\infty)\geq t$.

\smallskip

We now prove that a typical $\mu\in\pk$ satisfies $\dinfmu(-\infty)\geq\dboxinf(K)$. For $n\geq 1$, let $P=\mathbf P_{2^{-n}}(K)$
be the maximal number of balls of radius $2^{-n}$ with center in $K$ which do not intersect. Let $B(x_1,2^{-n}),\dots,B(x_P,2^{-n})$
be such a family of balls. We set $U_{j,n}=B(x_j,2^{-n})$, $U'_{j,n}=B(x_j,2^{-(n+1)})$ and
\begin{eqnarray*}
 \mathcal R_n&=&\left\{\mu\in\mathcal P(K);\ \forall j\in\left\{1,\dots,\mathbf P_{2^{-n}}(K)\right\},\ \mu(U'_{j,n})>0\right\}\\
\mathcal R&=&\bigcap_{n\geq 1}\mathcal R_n.
\end{eqnarray*}
Each $\mathcal R_n$ is dense by Lemma \ref{LEMDENS2} and open by Lemma \ref{LEMTOPOOPEN}, so that $\mathcal R$
is a dense $G_\delta$-subset of $\pk$. Pick now any $\mu\in\mathcal R$ and let $r\in(0,1)$. There exists $n\geq 1$
such that $2^{-(n+1)}\leq r<2^{-n}$. Moreover,
$$\sum_{j=1}^{\mathbf P_{2^{-n}}(K)}\mu(U_{j,n})\leq 1$$
so that there exists $j\in\{1,\dots,\mathbf P_{2^{-n}}(K)\}$ with $\mu(U_{j,n})\leq \big(\mathbf P_{2^{-n}}(K)\big)^{-1}$.
Now, since $\mu(U'_{j,n})>0$, one may find $y_j\in\supp(\mu)\cap U'_{j,n}$. Noticing
that $B(y_j,r/2)$ is contained in $U_{j,n}$, we get
$$\log\mu\big(B(y_j,r/2)\big)\leq \log \mu(U_{j,n})\leq -\log \mathbf P_{2^{-n}}(K)$$
which itself yields
$$\frac{\log\inf_{x\in\supp(\mu)} \mu\big(B(x,r/2)\big)}{\log(r/2)}\geq \frac{-\log \mathbf P_{2^{-n}}(K)}{\log(r/2)}\geq\frac{\log \mathbf P_{2^{-n}(K)}}{(n+2)\log 2}.$$
Taking the liminf, this shows that $\dinfmu(-\infty)\geq \dboxinf(K)$.

\smallskip

We finally prove that $\dinfmu(-\infty)\leq \dboxinf(K)$. Let $t>t'>\dboxinf(K)$. Lemma \ref{LEMANTIFROSTMANLOWERBOX} gives us, for 
each $n\geq 1$, a measure $m_n\in\pk$  and a real number $r_n\in(0,1/n)$ such that $m_n\big(B(x,r_n/2)\big)\geq Cr_n^{t'}$
for each $x\in K$, where $C$ just depends on $t'$. Let now $\mu\in\fk$ and let us set
$$\nu_{\mu,n}=\big(1-r_n^{(t-t')}\big)\mu+r_n^{t-t'}m_n.$$
For any $x\in K$, $\nu_{\mu,n}\big(B(x,r_n)\big)\geq Cr_n^t$. By Lemma \ref{LEMTOPO1}, we may find $\delta_n>0$ such that any $\nu\in\pk$ 
with $L(\nu,\nu_{\mu,n})<\delta_n$ satisfies
$$\nu\big(B(x,2r_n)\big)\geq 2Cr_n^t.$$
 We then consider 
$$\mathcal R=\bigcap_{n\geq 1}\bigcup_{\mu\in\fk}B(\nu_{\mu,n},\delta_n)$$
which is a dense $G_\delta$-set. For any $\nu\in\mathcal R$, there exists $r_n$ as small as desired such that
$$\frac{\log\inf_{x\in\supp\mu}\nu\big(B(x,2r_n)\big)}{\log r_n}\leq t+\frac{\log(2C)}{\log r_n},$$
so that $\dinfnu(-\infty)\leq t$.
\end{proof}

\subsection{On the optimality}

It is natural to ask whether the inequalities appearing in the second column of Theorem \ref{THMMAINLQSPECTRUM} are optimal or not.
The third column gives the answer, except for $\dsupmuq$, $q\geq 1$ and $\dinfmuq$, $q\leq 0$. It is very easy to exhibit a measure satisfying
$\dinfmuq=0$, $q<0$: any Dirac mass does the job. The situation is different for $\dsupmuq$, $q\geq 1$; we just know that
$\dsupmu(1)\leq\dimp(K)$, but we do not know whether this is always the best bound.

\begin{question}
 What is the biggest possible valut of $\dsupmuq$, $q\geq 1$?
\end{question}

It is conceivable that, at least for $q=1$, we need a kind of convex version of the packing dimension.

\subsection{$L^q$-dimension along subsequences}
When, for a measure $\mu\in\pk$, $\dinfmuq<\dsupmuq$, it is natural to ask the following question: for which $\tau\in(\dinfmuq,\dsupmuq)$
can we find a sequence $(r_n)$ going to zero such that 
$$\frac{\log I_\mu(r_n,q)}{(q-1)\log r_n}\to\tau ?$$
The most interesting case is when all values of $(\dinfmuq,\dsupmuq)$ can be attained. 
For continuity reasons, this is always the case.
\begin{proposition}\label{PROPLQSUBSEQUENCES}
 Let $K$ be a compact subset of $\mathbb R^d$, let $\mu\in\pk$ and let $q\in\mathbb R$. 
Then, for any $\tau\in(\dinfmuq,\dsupmuq)$, there exists a sequence $(r_n)$ going to zero such that
$$\frac{\log I_\mu(r_n,q)}{(q-1)\log r_n}\to\tau\textrm{ when $q\neq 1$},\ \frac{\log I_\mu(r_n,1)}{\log r_n}\to\tau\textrm{ otherwise.}$$
\end{proposition}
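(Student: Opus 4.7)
The plan is to pass to the variable $s=-\log r$ and exploit the monotonicity of $r\mapsto I_\mu(r,q)$: the statement then reduces to an intermediate value type result for a ratio $H(s)/s$ with $H$ monotonic. For each $x\in K$, the function $r\mapsto\mu(B(x,r))$ is non-decreasing and left-continuous (we use open balls); by monotone or dominated convergence, $r\mapsto I_\mu(r,q)$ is left-continuous, non-decreasing for $q>1$ and non-increasing for $q<1$, and $r\mapsto I_\mu(r,1)$ is non-decreasing. After substituting $r=e^{-s}$, the function
\[
F(s)=\begin{cases}\log I_\mu(e^{-s},q)&\text{if }q\ne1,\\ I_\mu(e^{-s},1)&\text{if }q=1,\end{cases}
\]
is monotonic on $(0,+\infty)$ and right-continuous with left limits. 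The ratio appearing in the proposition is $G(s)=F(s)/((1-q)s)$ when $q\ne1$ and $G(s)=-F(s)/s$ when $q=1$; in each case $G(s)=H(s)/s$ for a suitable $H$ that is non-decreasing and right-continuous, with $\liminf_{s\to+\infty}G=\dinfmuq$ and $\limsup_{s\to+\infty}G=\dsupmuq$.

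Given $\tau\in(\dinfmuq,\dsupmuq)$ and $S>0$, I would pick $S<s^*<s^{**}$ with $H(s^*)>\tau s^*$ and $H(s^{**})<\tau s^{**}$ (such values exist by the definitions of liminf and limsup), set $h(s)=H(s)-\tau s$, which is right-continuous and has only upward jumps, and define
\[
s_0=\sup\bigl\{s\in[s^*,s^{**}]:h(s)\geq 0\bigr\}.
\]
The set is non-empty (it contains $s^*$) and does not contain $s^{**}$, so $s^*\leq s_0<s^{**}$. For $s\in(s_0,s^{**}]$ one has $h(s)<0$; letting $s\downarrow s_0$ and invoking right-continuity yields $h(s_0)\leq 0$. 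On the other hand the supremum is either attained at $s_0$, giving $h(s_0)\geq 0$ directly, or approached from the left by a sequence $s_n\uparrow s_0$ with $h(s_n)\geq 0$, which yields $h(s_0^-)\geq 0$ and then $h(s_0)\geq h(s_0^-)\geq 0$ since $h$ can only jump upwards. Either way $h(s_0)=0$, that is $G(s_0)=\tau$. Applying this with $S$ taken arbitrarily large produces a sequence $s_n\to+\infty$ with $G(s_n)=\tau$; equivalently $r_n=e^{-s_n}\to 0$ realizes the required limit.

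The substantive input is the monotonicity and one-sided continuity of $I_\mu(\cdot,q)$; once these are in hand the rest is an abstract intermediate value lemma for monotone functions. The main technical subtlety is that $F$ may have countably many jumps coming from spheres of positive $\mu$-mass, which could in principle cause $G$ to skip over $\tau$; this is ruled out precisely because the jumps of $h$ are all in the same direction as the monotonicity of $H$, which forces the supremum $s_0$ to land on a zero of $h$.
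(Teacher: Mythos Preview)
Your proof is correct and follows the same overall strategy as the paper: exploit the monotonicity and one-sided continuity of $r\mapsto I_\mu(r,q)$ to run an intermediate value argument on the dimension ratio. The difference lies entirely in the packaging. The paper works directly with $\phi(r)=\frac{\log I_\mu(r,q)}{(q-1)\log r}$ (respectively $I_\mu(r,1)/\log r$), proves separately that $\phi$ is left-continuous and upper semicontinuous on the right via an auxiliary lemma on products of a negative continuous function with a negative monotone one, and then invokes a semicontinuous intermediate value theorem from the literature. Your change of variable $s=-\log r$ is a genuine simplification: it turns the ratio into $H(s)/s$ with $H$ monotone and right-continuous, so the only jumps of $h(s)=H(s)-\tau s$ are upward, and your $\sup$ argument gives the zero of $h$ directly and self-containedly, without the product lemma or the external citation. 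One cosmetic point: the inference ``the set does not contain $s^{**}$, so $s_0<s^{**}$'' is not quite complete as stated (a supremum can equal a point not in the set), but your own upward-jump argument immediately rules out $s_0=s^{**}$, since it would force $h(s^{**})\geq h(s^{**}{}^-)\geq 0$, contradicting $h(s^{**})<0$.
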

The proof of this proposition is based on an application of the intermediate value theorem. Unfortunately, the map $r\mapsto \frac1{q-1}\times\frac{\log I_\mu(r,q)}{\log r}$
does not need to be continuous. However, an enhancement of the intermediate value theorem to semicontinuous functions will be sufficient in our context.
It can be found e.g. in \cite{Gui95}.
\begin{lemma}\label{LEMIVT}
 Let $f:[u,v]\to\mathbb R$ be upper semicontinuous on the right and lower semicontinuous on the left.
Suppose moreover that $f(u)\geq f(v)$. Then for any $\lambda\in\big(f(v),f(u)\big)$, there
exists $x\in[u,v]$ such that $f(x)=\lambda$.
\end{lemma}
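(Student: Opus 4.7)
The plan is to run the classical supremum argument for the intermediate value theorem, with the two one-sided semicontinuity hypotheses stepping in for ordinary continuity at the critical point. I would set
\[
A = \{x \in [u,v] : f(x) \geq \lambda\} \quad\text{and}\quad x^\star = \sup A.
\]
Since $\lambda < f(u)$, we have $u \in A$, so $A$ is nonempty and $x^\star$ is well-defined; since $\lambda > f(v)$, we have $v \notin A$. The target becomes $f(x^\star) = \lambda$, which I would obtain from two matching one-sided inequalities at $x^\star$.

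First I would establish the lower bound $f(x^\star) \geq \lambda$. If $x^\star = u$ this is immediate from $f(u) > \lambda$. Otherwise, the definition of supremum furnishes a sequence $x_n \in A$ with $x_n \nearrow x^\star$, and since $f(x_n) \geq \lambda$ the hypothesis of lower semicontinuity on the left at $x^\star$ allows one to pass to the limit and conclude $f(x^\star) \geq \lambda$. A convenient byproduct is that $x^\star$ cannot equal $v$: were it so, the inequality just obtained would read $f(v) \geq \lambda$, contradicting $\lambda > f(v)$. Hence $x^\star \in [u,v)$.

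Next I would derive the matching upper bound $f(x^\star) \leq \lambda$. Because $x^\star < v$, every $y \in (x^\star, v]$ lies outside $A$, so $f(y) < \lambda$; consequently any right-sided cluster value of $f$ at $x^\star$ is at most $\lambda$. The hypothesis of upper semicontinuity on the right at $x^\star$ then transfers this bound from the nearby values of $f$ to $f(x^\star)$ itself, giving $f(x^\star) \leq \lambda$. Combining the two inequalities yields $f(x^\star) = \lambda$, so $x = x^\star$ is the required point.

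The only delicate point I anticipate lies in the two limit-passing steps: the conclusion would fail if $f$ were permitted an uncontrolled downward jump at $x^\star$, and the role of the two one-sided semicontinuity hypotheses is precisely to exclude that pathology at the sup point while allowing jumps elsewhere. No additional analytic machinery is needed beyond the order-completeness of $\mathbb R$ used in taking $\sup A$.
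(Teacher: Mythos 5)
The paper itself gives no proof of this lemma (it only cites Guillerme \cite{Gui95}), so your argument has to stand on its own, and as written it does not. The supremum skeleton with $A=\{x:f(x)\geq\lambda\}$ and $x^\star=\sup A$ is the right one, but both limit-passing steps invoke the named hypotheses in the direction they do not go. With the standard meanings, upper semicontinuity on the right at $x$ says $\limsup_{y\to x^{+}}f(y)\leq f(x)$ and lower semicontinuity on the left says $\liminf_{y\to x^{-}}f(y)\geq f(x)$. Semicontinuity transfers a bound from a sequence to its limit point only one way: an \emph{upper} semicontinuity hypothesis passes \emph{lower} bounds $f(x_n)\geq\lambda$ to the limit point, and a \emph{lower} semicontinuity hypothesis passes \emph{upper} bounds. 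In your first step you pass the lower bound $f(x_n)\geq\lambda$, $x_n\nearrow x^\star$, using lower semicontinuity on the left, whereas this needs $f(x^\star)\geq\limsup_{y\to x^{\star-}}f(y)$, i.e.\ upper semicontinuity on the left; in your second step you pass the upper bound $f(y)<\lambda$ for $y>x^\star$ using upper semicontinuity on the right, whereas this needs $f(x^\star)\leq\liminf_{y\to x^{\star+}}f(y)$, i.e.\ lower semicontinuity on the right. So what your argument actually establishes is the variant with hypotheses ``usc on the left and lsc on the right''.

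This is not a cosmetic mismatch, because under the standard readings the statement as printed is false: take $u<x_0<v$, $f\equiv 1$ on $[u,x_0]$ and $f\equiv 1/4$ on $(x_0,v]$. This $f$ is upper semicontinuous on the right everywhere and left-continuous (hence lower semicontinuous on the left) everywhere, $f(u)\geq f(v)$, yet $f$ omits every $\lambda\in(1/4,1)$. Hence the whole content of the lemma lies in pinning down the one-sided notions: for the statement to be true they must be read in the reversed sense, $f(x)\geq\limsup_{y\to x^{-}}f(y)$ and $f(x)\leq\liminf_{y\to x^{+}}f(y)$, which is exactly what your two transfers require. A correct write-up must state these definitions and check the two steps against them; treating the transfers as routine applications of the hypotheses as named is precisely where the proof breaks. (A minor, easily repaired point: the sequence $x_n\nearrow x^\star$ need not exist if $x^\star\in A$ is isolated in $A$ from the left, but in that case $f(x^\star)\geq\lambda$ holds trivially.)
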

The regularity of $I_ \mu(\cdot,q)$ will depend on the position of $q$ with respect to 1.
\begin{lemma}\label{LEMREGIMURQ}
 Let $\mu\in\pk$ and let $q\in\mathbb R$. Suppose moreover that, for any $r>0$, $I_\mu(r,q)\neq+\infty$.
\begin{enumerate}[(a)]
 \item For $q> 1$, $r\mapsto\log I_\mu(r,q)$ is negative, nondecreasing and continuous on the left;
 \item For $q=1$, $r\mapsto I_\mu(r,1)$ is negative, nondecreasing and continuous on the left;
 \item For $q<1$, $r\mapsto \log I_\mu(r,q)$ is positive, nonincreasing and continuous on the left.
\end{enumerate}
\end{lemma}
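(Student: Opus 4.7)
The plan is to verify each of the three assertions (sign, monotonicity, left-continuity) in each of the three cases. The signs and monotonicity are essentially direct consequences of two elementary facts: $\mu$ is a probability measure (so $\mu(B(x,r))\in[0,1]$) and $r\mapsto \mu(B(x,r))$ is nondecreasing. The real content lies in the left-continuity, where one needs a measure-theoretic approximation argument.

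For the signs and monotonicity, I would argue uniformly as follows. Since $\mu(B(x,r))\leq 1$, one has $\log\mu(B(x,r))\leq 0$, $\mu(B(x,r))^{q-1}\leq 1$ when $q>1$, and $\mu(B(x,r))^{q-1}\geq 1$ when $q<1$; integrating gives the claimed signs of $I_\mu(r,q)$ and hence of $\log I_\mu(r,q)$. For monotonicity, since $B(x,s)\subset B(x,r)$ when $s<r$, the function $s\mapsto \mu(B(x,s))$ is nondecreasing, so raising to the power $q-1$ preserves monotonicity in case (a), reverses it in case (c), while composing with $\log$ preserves it in case (b). Integrating in $x$ and then applying $\log$ where appropriate yields the three monotonicity statements.

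For left-continuity, the key observation is that $B(x,r)$ is open, so $B(x,r)=\bigcup_{s<r}B(x,s)$, and continuity of measure from below gives $\mu(B(x,s))\uparrow \mu(B(x,r))$ as $s\uparrow r$. Fix $r>0$ and $s_n\uparrow r$. In case (a), the sequence $\mu(B(x,s_n))^{q-1}$ is nonnegative and pointwise nondecreasing, so monotone convergence delivers $I_\mu(s_n,q)\uparrow I_\mu(r,q)$ and then $\log$ preserves the convergence. In case (b), $-\log\mu(B(x,s_n))\geq 0$ is pointwise nonincreasing to $-\log\mu(B(x,r))$, and a dominated convergence argument using the dominator $-\log\mu(B(x,s_1))$ (which is $\mu$-integrable by the standing hypothesis that $I_\mu(s_1,1)$ is finite) yields $I_\mu(s_n,1)\uparrow I_\mu(r,1)$.

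The main obstacle is case (c), which is precisely where the finiteness hypothesis earns its keep. Since $q-1<0$, the sequence $\mu(B(x,s_n))^{q-1}$ is pointwise nonincreasing to $\mu(B(x,r))^{q-1}$, so monotone convergence cannot be applied directly; one is forced to invoke dominated convergence, with dominator $\mu(B(x,s_1))^{q-1}$. This dominator is $\mu$-integrable exactly because of the assumption $I_\mu(s_1,q)<+\infty$, and with it in hand we get $I_\mu(s_n,q)\downarrow I_\mu(r,q)$; composing with $\log$ completes the proof. Had the finiteness hypothesis been absent, no obvious integrable dominator would be available, which is why this case is genuinely more delicate than (a) and (b).
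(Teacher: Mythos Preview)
Your proof is correct and follows essentially the same approach as the paper: the paper's proof is a three-line sketch that skips the sign and monotonicity statements entirely, notes that $\mu\big(B(x,r_n)\big)\uparrow\mu\big(B(x,r)\big)$ for $x\in\supp(\mu)$, and then invokes monotone convergence for (a) and ``Lebesgue's theorem'' (i.e.\ dominated convergence) for (b) and (c). You have simply spelled out the details, including the explicit dominators $-\log\mu\big(B(x,s_1)\big)$ and $\mu\big(B(x,s_1)\big)^{q-1}$ and the reason the standing finiteness hypothesis makes them integrable.
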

\begin{proof}
 We just prove the continuity statement. We fix $r>0$ and we pick a sequence $(r_n)$ increasing to 
$r$. Then $\big(\mu(B(x,r_n)\big)$ increases to $\mu\big(B(x,r)\big)$ for any $x\in\supp(\mu)$. 
Thus (a) follows from the monotone convergence theorem, whereas (b) and (c) follow from Lebesgue's theorem.
\end{proof}
In order to apply Lemma \ref{LEMIVT} to $r\mapsto \frac1{q-1}\times\frac{\log I_\mu(r,q)}{\log r}$, we need a last lemma.
\begin{lemma}\label{LEMPRODUCT}
 Let $I$ be an interval, let $g:I\to (-\infty,0)$ be continuous and let $f:I\to\mathbb (-\infty,0)$ be nondecreasing.
Then $fg$ is upper semicontinuous on the right.
\end{lemma}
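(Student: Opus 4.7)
The claim is essentially a direct consequence of sign-chasing, so the plan is short. I would begin by unpacking the definition: upper semicontinuity of $fg$ on the right at a point $x_0 \in I$ (which is not a right endpoint) means
$$\limsup_{x \to x_0^+} f(x)g(x) \leq f(x_0)g(x_0).$$
Since $f$ and $g$ take values in $(-\infty,0)$, the product $fg$ is strictly positive, so we are bounding a positive quantity from above.

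The key step is the following. Fix $x_0 \in I$ with $x_0$ not a right endpoint, and take $x > x_0$ close to $x_0$. Monotonicity of $f$ gives $f(x) \geq f(x_0)$, with both sides negative. Multiplying by $g(x) < 0$ reverses the inequality, yielding
$$f(x)g(x) \leq f(x_0) g(x).$$
Now continuity of $g$ at $x_0$ gives $f(x_0)g(x) \to f(x_0)g(x_0)$ as $x \to x_0^+$. Taking $\limsup$ on the left of the previous inequality then delivers exactly
$$\limsup_{x \to x_0^+} f(x)g(x) \leq f(x_0)g(x_0),$$
which is the desired upper semicontinuity on the right. If $x_0$ happens to be the right endpoint of $I$, the condition is vacuous.

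There is no real obstacle here: the only thing to watch is getting the sign of $g(x)$ right when multiplying the monotonicity inequality, and recognizing that the continuity of $g$ alone (not of $f$) is what lets us pass from $f(x_0)g(x)$ to its limit. The same computation shows that, symmetrically, $fg$ would be lower semicontinuous on the left, which matches what is needed to feed the product $r \mapsto \frac{1}{q-1}\cdot\frac{\log I_\mu(r,q)}{\log r}$ into Lemma \ref{LEMIVT} for the proof of Proposition \ref{PROPLQSUBSEQUENCES}.
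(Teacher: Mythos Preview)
Your proof is correct and follows essentially the same approach as the paper's: both use the monotonicity of $f$ to compare $f(x)$ with $f(x_0)$, reverse an inequality by multiplying by a negative value, and then invoke the continuity of $g$ to pass to the limit. The only cosmetic difference is that you multiply by $g(x)$ and take a limsup directly, whereas the paper multiplies by $g(r_0)$ and carries an $\varepsilon$ through; your version is in fact a bit cleaner.
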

\begin{proof}
 Let $r_0\in I$, let $\veps>0$ and let $\eta>0$ such that any $r\in [r_0,r_0+\eta)\cap I$ satisfies
$$g(r_0)\leq g(r)+\veps.$$
We also know that for these values of $r$,
$$f(r_0)\leq f(r)\leq 0,$$
so that
\begin{eqnarray*}
 f(r_0)g(r_0)&\geq&f(r)g(r_0)\\
&\geq&f(r)g(r)+\veps f(r)\\
&\geq&f(r)g(r)+\veps f(r_0).
\end{eqnarray*}
This shows that $\limsup_{r\to r_0^+}f(r)g(r)\leq f(r_0)g(r_0)$, which means that $fg$ is upper semicontinuous on the right.
\end{proof}

\begin{proof}[Proof of Proposition \ref{PROPLQSUBSEQUENCES}]
 Let $\tau\in (\dinfmuq,\dsupmuq)$ (if $\dinfmuq=\dsupmuq$), there is nothing to say. Define 
$$\phi(r)=\left\{\begin{array}{ll}\displaystyle
                  \frac1{q-1}\times\frac{\log I_\mu(r,q)}{\log r}&\textrm{ if }q\neq 1\\[0.35cm]
\displaystyle\frac{I_\mu(r,1)}{\log r}&\textrm{ if }q=1.
                 \end{array}\right.$$
By the above lemmas, $\phi$ is continuous on the left and upper semicontinuous on the right. Moreover, for any $\veps>0$,
one may find $0<r_1<r_2<\veps$ such that
$$\phi(r_1)>\tau\textrm{ and }\phi(r_2)<\tau.$$
By Lemma \ref{LEMIVT}, one can find $r_3\in(r_1,r_2)$ such that
$$\phi(r_3)=\tau.$$
The conclusion of Proposition \ref{PROPLQSUBSEQUENCES} follows easily.
\end{proof}
Combining this proposition with Theorem \ref{THMMAINLQSPECTRUM} yields:
\begin{corollary}
 Let $K$ be an infinite compact subset of $\mathbb R^d$. Write
\begin{eqnarray*}
 s_{\rm sep}&=&\bsi(K)\\
s_u&=&\dboxsuplocunif(K)\\
s&=&\dboxsup(K).
\end{eqnarray*}
Define, for $q\in\mathbb R\backslash\{1\}$, 
$$E_q=\left\{
\begin{array}{ll}
[0,s_u]&\textrm{ for }q> 1\\

[0,s]&\textrm{ for }q\in(0,1)\\

[s_{\rm sep},s]&\textrm{ for }q=0\\

[s_{\rm sep},+\infty)&\textrm{ for }q<0.

\end{array}\right.$$
Then a typical measure $\mu\in\mathcal P(K)$ satisfies, for any $q\in\mathbb R\backslash\{1\}$, for any $\tau\in E_q$, there exists a sequence $(r_n)$ going to zero such that
$$\frac{\log I_\mu(r_n,q)}{(q-1)\log r_n}\to \tau.$$
\end{corollary}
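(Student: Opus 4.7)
The plan is to combine Theorem \ref{THMMAINLQSPECTRUM} with Proposition \ref{PROPLQSUBSEQUENCES} by a Baire-type quantifier swap identical in spirit to the one used at the beginning of Section 3. Indeed, for each fixed $q \in \mathbb R \setminus \{1\}$, Theorem \ref{THMMAINLQSPECTRUM} already produces a dense $G_\delta$-set of measures on which $\dinfmuq$ and $\dsupmuq$ take their prescribed typical values, namely the endpoints of $E_q$ (with the upper endpoint being $+\infty$ when $q<0$). Choosing a countable dense subset $(q_k) \subset \mathbb R\setminus\{1\}$ that contains $0$, and intersecting the corresponding dense $G_\delta$-sets, one obtains a dense $G_\delta$-set $\mathcal R \subset \pk$ such that, for every $\mu \in \mathcal R$ and every $q_k$, $\dinfmu(q_k)$ and $\dsupmu(q_k)$ are the typical values. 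Then, approximating an arbitrary $q$ by $q_k$'s from both sides and invoking the monotonicity of $q \mapsto \dinfmuq$ and $q \mapsto \dsupmuq$ supplied by Lemma \ref{LEMJENSEN}, one concludes that for $\mu \in \mathcal R$ the typical values are achieved simultaneously for every $q \in \mathbb R \setminus \{1\}$.

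Second, I would fix such a $\mu \in \mathcal R$ and $q \in \mathbb R \setminus \{1\}$ and set
$$\phi_q(r) = \frac{1}{q-1}\cdot\frac{\log I_\mu(r,q)}{\log r},$$
so that $\liminf_{r \to 0}\phi_q(r)$ and $\limsup_{r \to 0}\phi_q(r)$ coincide with the two endpoints of $E_q$ (or, for $q<0$, with $s_{\rm sep}$ and $+\infty$). The endpoint $\tau = \dinfmuq$ is attained along some sequence $r_n \to 0$ simply by the definition of the $\liminf$; likewise, when finite, $\tau = \dsupmuq$ is attained by definition of the $\limsup$. For any intermediate $\tau \in (\dinfmuq, \dsupmuq)$, Proposition \ref{PROPLQSUBSEQUENCES} directly furnishes a sequence $r_n \to 0$ with $\phi_q(r_n) \to \tau$. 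Since on $\mathcal R$ we have $E_q \subset [\dinfmuq, \dsupmuq]$ for every $q$, this exhausts all $\tau \in E_q$.

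The only case requiring a second look is $q < 0$, where $E_q = [s_{\rm sep}, +\infty)$ is unbounded while $\dsupmuq = +\infty$. Any finite $\tau \geq s_{\rm sep}$ is either equal to $s_{\rm sep} = \dinfmuq$ (handled by the $\liminf$ definition) or lies strictly inside $(\dinfmuq, \dsupmuq) = (s_{\rm sep}, +\infty)$, so that Proposition \ref{PROPLQSUBSEQUENCES} still applies. No point of $E_q$ is missed. The only genuine technical issue in the whole argument is the initial quantifier swap, since naively Theorem \ref{THMMAINLQSPECTRUM} is a statement for each fixed $q$ rather than simultaneously for all $q$; but this is resolved exactly as in the opening paragraphs of Section 3 via Lemma \ref{LEMJENSEN}, and the rest of the proof is a direct combination of Theorem \ref{THMMAINLQSPECTRUM} and Proposition \ref{PROPLQSUBSEQUENCES}.
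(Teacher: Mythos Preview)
Your proposal is correct and matches the paper's own approach, which simply states that the corollary follows by ``combining this proposition with Theorem~\ref{THMMAINLQSPECTRUM}'' without further details. Your write-up actually fills in the details the paper omits---the quantifier swap over a countable dense set of $q$'s via Lemma~\ref{LEMJENSEN} (exactly as in Section~3), the separate handling of the endpoints of $E_q$ via the definitions of $\liminf$ and $\limsup$, and the appeal to Proposition~\ref{PROPLQSUBSEQUENCES} for interior points---all of which are correct.
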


\subsection{Prevalence}

The notion of genericity used in this paper is in the sense of Baire theorem. One could
also consider other notions, in particular that of prevalence. In \cite{Ol10},
Olsen studied the typical values of $\dinfmuq$ and $\dsupmuq$ for $q\geq 0$ for this notion
of genericity. Surprizingly enough, the results are rather different from
the results in the Baire point of view.

\smallskip

{\bf Problem.} What can be said on $\dinfmuq$ and $\dsupmuq$ for a prevalent measure, when $q<0$?
\providecommand{\bysame}{\leavevmode\hbox to3em{\hrulefill}\thinspace}
\providecommand{\MR}{\relax\ifhmode\unskip\space\fi MR }
\providecommand{\MRhref}[2]{%
  \href{http://www.ams.org/mathscinet-getitem?mr=#1}{#2}
}
\providecommand{\href}[2]{#2}

\end{document}